\newcommand{\arxiv}[1]{\href{http://arxiv.org/abs/#1}{\tt arXiv:\nolinkurl{#1}}}
\newtheorem{Theorem}{Theorem}[section]
\newtheorem{Proposition}[Theorem]{Proposition}
\newtheorem{Lemma}[Theorem]{Lemma}
\newtheorem{Nested Lemma}[Theorem]{Nested Lemma}
\newtheorem{Corollary}[Theorem]{Corollary}
\theoremstyle{definition}
\newtheorem{Definition}[Theorem]{Definition}
\newtheorem{Remark}[Theorem]{Remark}
\renewcommand{\theenumi}{\roman{enumi}}
\renewcommand{\labelenumi}{(\theenumi)}
\renewcommand{\@makefnmark}{\mbox{\textsuperscript{}}}
\newcommand{\cL}{\mathcal{L}}
\newcommand{\HN}{\text{HN}}
\newcommand{\wt}{\text{wt}}
\newcommand{\lbc}{\mathbf{c}^\ell}
\newcommand{\rbc}{\mathbf{c}^r}
\newcommand{\ba}{\mathbf{c}}
\newcommand{\cc}{\mathbf{c}}
\newcommand{\dd}{\mathbf{d}}
\newcommand{\cB}{\mathcal{B}}
\renewcommand{\cL}{\mathcal{L}}
\newcommand{\Z}{\mathbb{Z}}
\newcommand{\U}{\mathbf{U}}
\newcommand{\e}{\tilde{e}}
\newcommand{\f}{\tilde{f}}
\newcommand{\tpsi}{\tilde{\psi}}
\newcommand{\cA}{\mathcal{A}}
\newcommand{\ATT}{A_2^{(2)}}
\newcommand{\nc}{\newcommand}
\nc{\cosoc}{\operatorname{cosoc}}
\nc{\soc}{\operatorname{soc}}
\nc{\asl}{\widehat{\mathfrak{sl}}}
\nc{\g}{\mathfrak{g}}
\nc{\br}{\Bbb R}
\nc{\bz}{\Bbb Z}
\nc{\bn}{\Bbb N}
\nc{\omu}{\overline{\mu}}
\nc{\olambda}{\overline{\lambda}}
\nc{\oa}{\overline{a}}
\nc{\Irr}{\text{Irr }}
\nc{\oT}{\overline{T}}
\nc{\oR}{\overline{R}}
\nc{\oI}{\overline{I}}
\nc{\bfv}{{\bf v}}
\nc{\vareps}{\varepsilon}
\nc{\MV}{\mathcal{MV}}
\nc{\be}{\beta}
\nc{\End}{\text{End}}
\begin{document}

\title{Affine PBW Bases and MV Polytopes in rank 2}

\author{Dinakar Muthiah, Peter Tingley}

\address{Dinakar Muthiah, Dept. of Mathematics, Brown University, Box 1917 Providence, RI 02912}
\email{dmuthiah@math.brown.edu}

\address{Peter Tingley, Dept. of Maths and Stats, Loyola University Chicago}
\email{ptingley@luc.edu}

\thanks{The second author was supported by NSF grants DMS-0902649, DMS-1162385 and DMS-1265555.}

\begin{abstract}
Mirkovi\'c-Vilonen (MV) polytopes have proven to be a useful tool in understanding and unifying many constructions of crystals for finite-type Kac-Moody algebras. These polytopes arise naturally in many places, including the affine Grassmannian, pre-projective algebras, PBW bases, and KLR algebras. There has recently been progress in extending this theory to the affine Kac-Moody algebras. A definition of MV polytopes in symmetric affine cases has been proposed using pre-projective algebras. In the rank-2 affine cases, a combinatorial definition has also been proposed. Additionally, the theory of PBW bases has been extended to affine cases, and, at least in rank-2, we show that this can also be used to define MV polytopes. The main result of this paper is that these three notions of MV polytope all agree in the relevant rank-2 cases. Our main tool is a new characterization of rank-2 affine MV polytopes.
\end{abstract}

\maketitle


\section{Introduction}

Let $\g$ be a symmetrizable Kac-Moody algebra. To each irreducible lowest weight representation $V(-\lambda)$ Kashiwara associates a ``crystal" $B(-\lambda)$, which is a combinatorial object that records leading order behavior of the representation. The crystals $B(-\lambda)$ form a directed system whose limit $B(-\infty)$ can be thought of as the crystal for $U^+(\g)$. The foundations of this theory makes heavy use of the quantized universal enveloping algebra associated with $\g$, but the resulting crystals can often be realized in other ways. 

When $\g$ is of finite type, one interesting realization of $B(-\infty)$ is via the theory of MV (Mirkovi\'c-Vilonen) polytopes. The MV polytope $MV_b$ associated to an element $b \in B(-\infty)$ arises in (at least) four natural ways:

\begin{enumerate}

\item \label{int:mv} The original construction as the moment map image of a certain MV cycle in the affine Grassmannian, due to Anderson \cite{And} and studied by Kamnitzer \cite{Kam, KamCrystal}.

\item \label{int:preproj} As the convex hull of the dimension vectors of all subrepresentations of a certain preprojective algebra representation. This comes from work of Baumann-Kamnitzer \cite{BK??} on quiver varieties.

\item \label{int:pbw} As the convex hulls of the paths defined by the PBW monomials corresponding to $b$ with respect to all reduced expressions for the longest word $w_0$ in the Weyl group. This is a rewording of results in \cite{Kam}, which in turn makes use of Lusztig's explicit calculations in \cite{LusPiecewiseLinear}. 

\item \label{int:klr} As the character-polytope of an irreducible representation of a Khovanov-Lauda-Rouquier algebra. This is done by the second author and Webster in \cite{TW}, building on ideas of Kleshchev and Ram \cite{KlRa}.

\end{enumerate}
\noindent In addition, MV polytopes can be approached from a purely combinatorial point of view, using the following result from \cite{Kam}:
\begin{enumerate}
  \setcounter{enumi}{4}
  \item \label{int:tcp} The MV polytopes are exactly those convex polytopes all of whose edges are parallel to roots, and all of whose 2-faces satisfy a combinatorial condition called a ``tropical Pl\"ucker relation."
\end{enumerate}

The current paper is part of an effort to develop a version of this story in affine type.
There is not yet a true affine analogue of \eqref{int:mv}, although see \cite{BFG} for a definition of (open) MV cycles
in untwisted affine types, and \cite{Muth, NSS} for some related combinatorial results. 
The other constructions are to various extents understood in the affine situation:
\begin{enumerate}

\item[\eqref{int:preproj}] The first definition of MV polytopes in symmetric affine types was given in \cite{BKT} using quiver varieties. In that work, an MV ``polytope" is actually a decorated polytope; in addition to the underlying polytope, one must include the data of a partition (of some $N$) associated to each co-dimension 1 face parallel to the imaginary root $\delta$ (this is a rewording of \cite{BKT}, see \cite{TW} for this statement). As in finite type the underlying polytope is the convex hull of the dimension vectors of all subrepresentations of a certain preprojective algebra representation. 

\item[\eqref{int:pbw}] Affine PBW type crystal bases were defined in \cite{BCP,Aka,BN}. Prior to the present work, no connection has been made with affine MV polytopes, although the format of the combinatorics is similar (such as partitions associated to imaginary root directions). 

\item[\eqref{int:klr}] In \cite{TW}, KLR algebras are used to give a construction and combinatorial characterization of MV polytopes in all affine types. That work makes use of the current paper to describe how the affine MV polytopes coming from KLR algebras are related to those coming from quiver varieties in \cite{BKT}.

\item[\eqref{int:tcp}] It is also shown in \cite{BKT, TW} that affine MV polytopes can be characterized by conditions on 2-faces, which reduces the problem of providing a combinatorial description to understanding the rank 2 affine cases. See \cite{BKT} for the symmetric case, and \cite{TW} for the general type affine case. 

\end{enumerate}
Finally, in \cite{BDKT:??}  a combinatorial definition of MV polytopes for the two rank-2 affine cases was given, but 
it was not determined whether this agreed with \cite{BKT} in the relevant case of $\asl_2$. 

This leaves several important questions unanswered. First, one would like to show that the combinatorics in \cite{BDKT:??} matches the rank 2 case of the construction in \cite{BKT}. Second, one would like to relate affine MV polytopes with affine PBW bases. Third, one would like to relate affine MV polytopes to geometry along the geometry in \cite{BFG}. 
The current paper addresses the first question and the rank two case of the second. 

Our result showing that  the combinatorics in \cite{BDKT:??} matches the rank 2 case of the construction in \cite{BKT} is the last step in providing a combinatorial characterization of symmetric affine MV polytopes.

The relationship we describe between rank 2 affine MV polytopes and affine PBW bases essentially answers the rank 2 case of the problem posed by Beck-Nakajima in \cite[Remark 3.29]{BN}. In rank 2 there are exactly two PBW bases, and each basis is in bijection with the crystal $B(-\infty)$. Hence there is a natural bijection between these two bases. 
Answering Beck-Nakajima's question amounts to giving an explicit description of this bijection. Our results show that this bijection is given exactly by the combinatorics in \cite{BDKT:??}.
This should be thought of as a rank-2 affine analogue of Lusztig's piecewise-linear bijections from \cite{LusPiecewiseLinear}.
We believe our methods can be used to relate affine MV polytopes and PBW bases more generally, and thus to answer Beck-Nakajima's question in all affine types. We plan to address this in a future work.

Our main tool 
is a new characterization of rank-2 affine MV polytopes, or more precisely of the map that takes an element of $B(-\infty)$ to its MV polytope. We show that this is the unique map to decorated polytopes satisfying 
some  conditions related to the 
crystal operators and 
Saito's crystal reflections, and one other condition to handle the imaginary roots. For the $\asl_2$ cases, we consider three a priori different maps from $B(-\infty)$ to decorated polytopes: the combinatorial construction from \cite{BDKT:??}, the geometric construction using quiver varieties from \cite{BKT}, and the algebraic construction from PBW bases. We show that all three maps satisfy the conditions of our uniqueness theorem. In particular, they all agree. In the $\ATT$ case, the first two of these constructions make sense, and we show that these also agree.

This paper is organized as follows: In \S\ref{sec:background} we briefly give some general background on quantum affine algebras and crystals. In \S\ref{sec:rank2} we recall the combinatorial definition of rank-2 affine MV polytopes from \cite{BDKT:??} and prove our characterization theorem (Theorem \ref{th:unique-aff}). In \S\ref{sec:pbw} we present some background on affine PBW bases, define a map from these to decorated polytopes, and prove that the resulting PBW polytopes are in fact identical to the combinatorially defined MV polytopes (Theorem \ref{PBWPolytopeTheorem}). In \S\ref{sec:quiver} we recall the construction of $\asl_2$ MV polytopes from \cite{BKT}, show that the result (after a minor change of conventions) also agrees with the combinatorial construction (Theorem \ref{th:sl2=sl2}), and state the final combinatorial characterization of MV polytopes in all symmetric affine types (Theorem \ref{th:comb-poly}). Note that \S\ref{sec:pbw} and \S\ref{sec:quiver} are completely independent of one another.  

\subsection{Acknowledgements} 
We thank Pierre Baumann, Jonathan Beck, Alexander Braverman, Joel Kamnitzer and Hiraku Nakajima for many useful discussions.

\section{Background} \label{sec:background}

\subsection{Quantum Affine Algebras and Canonical Basis} \label{ss:qg}
Our notation for quantum affine algebras will mostly follow \cite[\S2.2]{BN}, and we refer the reader there for more details. Here we only recall the key points, and we will only consider the quantum affine algebras for $\asl_2$ and $\ATT$, which have rank-2 Dynkin diagrams. We label the nodes of those Dynkin diagram by $0,1$ (thought of as integers mod 2), where for $A_2^{(2)}$ the node $0$ corresponds to the long root. Throughout:

\begin{itemize}

 \item $\g$ is the Kac-Moody algebra $\asl_2$ or $\ATT$.

\item $\U$ is the quantized universal enveloping algebra for $\asl_2$ or $\ATT$.

\item $E_0, E_1, F_0, F_1$ are the standard Chevalley generators. 

\item $\U^+$ is the subalgebra of $\U$ generated by $E_0, E_1$. 

\item $\alpha_0$ and $\alpha_1$ are the simple roots for $\g$. 

\item $P$ is the weight lattice of $\g$. 

\item Kashiwara's involution $*$ is the algebra anti-involution of $\U$ which fixes all the Chevalley generators. Notice that $*$ preserves $\U^+$. 

\item For $\asl_2$, $\tau$ is the algebra involution of $\U$ induced by the Dynkin diagram automorphism. $\tau$ also preserves $\U^+$. 

\item $T_0, T_1$ are the generators of the braid group acting on $\U$ (which in the rank two affine cases is just the free group on two generators). By e.g. \cite[Corollary 1.3.3]{Saito:1994}, $T_i \circ * = * \circ T^{-1}_i$.

\item $\cB$ is Lusztig's canonical basis (equivalently Kashiwara's global crystal basis) of $\U^+$. 

\item $\cA$ is the ring consisting of all rational functions in ${\Bbb C}(q)$ which are regular at $q=\infty$, and $\mathcal{L}= \text{span}_\cA \cB$. Recall that $\cB$ descends to a crystal basis of $\cL/q^{-1} \cL$.

\item $\e_i,\f_i$ are Kashiwara's crystal operators on $\cB$ (as a crystal basis for $\cL/q^{-1} \cL$). 

\end{itemize}

\subsection{Crystals} \label{ss:crystals}
We are interested in the crystal $B(-\infty)$ associated with $\U^+(\g)$. This section contains a brief review of some properties of this object, and we refer the reader to e.g. \cite{Kashiwara:1995} or \cite{HongKang} for more details. Recall that $B(-\infty)$ is a set along with operators $\e_i, \f_i: B(-\infty) \rightarrow B(-\infty) \sqcup \{ \emptyset \}$ for each $i \in I$. These satisfy 

\begin{itemize}
\item $B(-\infty)$ has a unique element $b_0$ such that $\f_i(b_0)=0$ for all $i$. 

\item Every element of $B(-\infty)$ can be obtained from $b_0$ by applying some sequence of operators $e_i$ for various $i$. 

\item For $b, b' \in B(-\infty)$, $\e_i(b)=b'$ if and only if $\f_i(b')=b$. 

\item There is a unique map $\wt: B(-\infty) \rightarrow P$ such that $\wt(b_0)=0$ and each operator $\e_i$ has weight $\alpha_i$, the corresponding simple root. 
\end{itemize}
There are also important functions $\varphi_i, \vareps_i : B(-\infty) \rightarrow \bz$ defined by
\begin{equation}
\varphi_i(b) = \max \{ n \in \bz_{\geq 0} : f_i^n b \neq \emptyset \}, \quad \text{and}  \quad \vareps_i(b) = \varphi_i(b)- \langle \alpha_i^\vee, \wt(b) \rangle.
\end{equation}
Finally, there is a weight preserving involution $*: B(-\infty) \rightarrow B(-\infty)$ induced by Kashiwara's involution on $\U^+$.  
Define $\f_i^* = * \f_i *,\e_i^*=* \e_i *, \varphi_i^*= \varphi \circ *$ and $\vareps_i^*= \vareps \circ *.$

By \cite[Proposition 3.2.3]{KS:1997}, for every $i$ and every $b \in B(-\infty)$, the subset of $B(-\infty)$ generated by $b$ by $\tilde e_i, \tilde e^*_i, \tilde f_i$ and $\tilde f^*_i$ looks like

\vspace{0.15cm}

\begin{equation} \label{ii*-pic}
\setlength{\unitlength}{0.15cm}
\begin{tikzpicture}[xscale=0.45,yscale=-0.45, line width = 0.03cm]

\draw[line width = 0.05cm] (10.3,5.2)--(10.5, 5.6);

\draw [line width = 0.05cm] (16.15,2.25)--(16.35, 2.65);

\draw [line width = 0.05cm] (18,1.3)--(18.2, 1.7);

\draw[line width = 0.05cm] (10.4,5.4)--(18.1, 1.5);

\draw node at (14.2, 4.5) {$\varphi_i(b)$};
\draw node at (17.9, 2.6) {$\varepsilon_i(b)$};
\draw node at (16, 1.3) {$b$};

\draw node at (10,5) {$\bullet$};

\draw node at (8,4) {$\bullet$};
\draw node at (12,4) {$\bullet$};

\draw node at (6,3) {$\bullet$};
\draw node at (10,3) {$\bullet$};
\draw node at (14,3) {$\bullet$};

\draw node at (4,2) {$\bullet$};
\draw node at (8,2) {$\bullet$};
\draw node at (12,2) {$\bullet$};
\draw node at (16,2) {$\bullet$};
\draw node at (2,1) {$\bullet$};
\draw node at (6,1) {$\bullet$};
\draw node at (10,1) {$\bullet$};
\draw node at (14,1) {$\bullet$};
\draw node at (18,1) {$\bullet$};

\draw node at (2,-1) {$\bullet$};
\draw node at (6,-1) {$\bullet$};
\draw node at (10,-1) {$\bullet$};
\draw node at (14,-1) {$\bullet$};
\draw node at (18,-1) {$\bullet$};

\draw [->, dotted] (10,5)--(8.2,4.1); 
\draw [->, dotted] (8,4)--(6.2,3.1); 
\draw [->, dotted] (12,4)--(10.2,3.1); 

\draw [->, dotted] (6,3)--(4.2,2.1); 
\draw [->, dotted] (10,3)--(8.2,2.1); 
\draw [->, dotted] (14,3)--(12.2,2.1); 

\draw [->, dotted] (4,2)--(2.2,1.1); 
\draw [->, dotted] (8,2)--(6.2,1.1); 
\draw [->, dotted] (12,2)--(10.2,1.1); 
\draw [->, dotted] (16,2)--(14.2,1.1); 

\draw [->] (10,5)--(11.8,4.1); 

\draw [->] (8,4)--(9.8,3.1); 
\draw [->] (12,4)--(13.8,3.1); 

\draw [->] (6,3)--(7.8,2.1); 
\draw [->] (10,3)--(11.8,2.1); 
\draw [->] (14,3)--(15.8,2.1); 

\draw [->] (4,2)--(5.8,1.1); 
\draw [->] (8,2)--(9.8,1.1); 
\draw [->] (12,2)--(13.8,1.1); 
\draw [->] (16,2)--(17.8,1.1); 

\draw [->, dashed] (2,1) --(2,-0.7);
\draw [->, dashed] (6,1) --(6,-0.7);
\draw [->, dashed] (10,1) --(10,-0.7);
\draw [->, dashed] (14,1) --(14,-0.7);
\draw [->, dashed] (18,1) --(18,-0.7);
\draw [->, dashed] (2,-1) --(2,-2.7);
\draw [->, dashed] (6,-1) --(6,-2.7);
\draw [->, dashed] (10,-1) --(10,-2.7);
\draw [->, dashed] (14,-1) --(14,-2.7);
\draw [->, dashed] (18,-1) --(18,-2.7);

\end{tikzpicture}
\end{equation}

\noindent where the solid and dashed arrows show the action of
$\e_i$, and the dotted or dashed arrows denote the action of $\e_i^*$.  Here the width of the diagram at the top is $\varepsilon_i(b_-)$, where $b_-$ is the bottom vertex. For any $b$ in this component such that $\varphi_i^*(b)=0$ (i.e. such that there are no dotted or dashed arrows pointing to $b$), $\varphi_i(b)$ is the longest path formed by solid arrows that ends at $b$, and  $\vareps_i(b)$ is the longest path formed by solid arrows that starts at $b$. 

The following operation $\sigma_i$ comes from \cite[Corollary 3.4.8]{Saito:1994}. We will refer to this as a Saito reflection. 

\begin{Definition} \label{def:ref} For $b \in B(-\infty)$ such that $\varphi_i^*(b)=0$ , define $\sigma_ib=
  ( \e_i^*)^{\epsilon_i(b)} \f_i^{\varphi_i(b)} b$.
\end{Definition}

As one would expect from the name,
$\sigma_i$ has the property that $\wt \sigma_i(b) = s_i \wt(b)$ where $s_i$ is the $i$th simple reflection. Note however that this is only true provided $\varphi_i^*(b)=0$.
There is also a notion of dual Saito reflection defined by $\sigma_i^*b = * \sigma_i *$ which acts as a reflections for those $b \in B(-\infty)$ such that $\varphi_i(b)=0$.
For our purposes, it is useful to have the following alternative
characterization of $\sigma_i$: 
   \begin{Proposition} \label{prop:ref} Fix $b \in B(-\infty)$. If $\varphi_i^*(b)=0$, then 
     $\sigma_i b=\f_i^{max}(\e_i^*)^N b$ for
     any $N \geq \vareps_i(b)$.
     Similarly, if $\varphi_i(b)=0$, then 
     $\sigma_i^* b=(\f_i^*)^{max}(\e_i)^N b$ for
     any $N \geq \vareps_i^*(b)$.

   \end{Proposition}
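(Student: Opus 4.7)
The two statements in the proposition are related by Kashiwara's involution $*$: applying $*$ converts $\e_i^*$ to $\e_i$, sends $\f_i^{\max}$ to $(\f_i^*)^{\max}$, and converts the hypothesis $\varphi_i^*(b)=0$ to $\varphi_i(*b)=0$ (and similarly $\varepsilon_i(b)=\varepsilon_i^*(*b)$). Thus it suffices to prove the first statement: for $b$ with $\varphi_i^*(b)=0$ and any $N\geq \varepsilon_i(b)$, we have $\f_i^{\max}(\e_i^*)^N b=\sigma_i b=(\e_i^*)^{\varepsilon_i(b)}\f_i^{\varphi_i(b)}b$.

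I plan to argue by induction on $N-\varepsilon_i(b)\geq 0$, relying on the explicit bicrystal structure \eqref{ii*-pic} from \cite[Proposition 3.2.3]{KS:1997}. The key structural facts I will exploit are that the bicrystal component containing $b$ decomposes into a ``triangular'' top region (where $\e_i$ and $\e_i^*$ act distinctly, shown as solid and dotted arrows) and a ``rectangular'' bottom region (where $\e_i$ and $\e_i^*$ coincide, shown as dashed arrows). The hypothesis $\varphi_i^*(b)=0$ places $b$ on the right edge of the triangular top, and by inspection of \eqref{ii*-pic}, applying $\e_i^*$ exactly $\varepsilon_i(b)$ times moves $b$ to a vertex at the top of a dashed column on the boundary between the two regions.

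For the base case $N=\varepsilon_i(b)$: tracing \eqref{ii*-pic}, the element $(\e_i^*)^{\varepsilon_i(b)}b$ sits at the top of a dashed column whose $\f_i$-chain extends upward along a solid arrow path of length $\varphi_i(b)$, terminating at $(\e_i^*)^{\varepsilon_i(b)}\f_i^{\varphi_i(b)}b=\sigma_i b$. Hence $\f_i^{\max}(\e_i^*)^{\varepsilon_i(b)}b=\sigma_i b$. For the inductive step $N\mapsto N+1$ with $N\geq \varepsilon_i(b)$: the element $(\e_i^*)^N b$ lies in the rectangular region, where $\e_i$ and $\e_i^*$ agree. Thus $(\e_i^*)^{N+1}b=\e_i(\e_i^*)^N b$, and
\[
\f_i^{\max}(\e_i^*)^{N+1}b=\f_i^{\max}\e_i(\e_i^*)^N b=\f_i^{\max}(\e_i^*)^N b,
\]
where the last equality uses the elementary identity $\f_i^{\max}\e_i b'=\f_i^{\max}b'$ whenever $\e_i b'\neq 0$ (both sides equal the topmost element of the $\e_i$-string through $b'$).

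The main obstacle is the base case, which requires a careful reading of the bicrystal diagram to identify $(\e_i^*)^{\varepsilon_i(b)}b$ and verify that its $\f_i$-chain terminates precisely at $\sigma_i b$. Once the structure of \eqref{ii*-pic} is unpacked, this amounts to straightforward bookkeeping on the arrow combinatorics; conceptually, it says that two paths from $b$ to $\sigma_i b$ in the bicrystal (first $\f_i^{\varphi_i(b)}$ then $(\e_i^*)^{\varepsilon_i(b)}$ versus first $(\e_i^*)^{\varepsilon_i(b)}$ then $\f_i^{\max}$) yield the same vertex, which is forced by the shape of the component.
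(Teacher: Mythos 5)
Your proof is correct and takes essentially the same approach as the paper: both arguments come down to reading off the structure of the bicrystal component from the Kashiwara--Saito picture \eqref{ii*-pic}. The paper disposes of the whole claim in one sentence (``it is clear from the picture''), whereas you organize the bookkeeping more carefully --- reducing the second statement to the first via the Kashiwara involution $*$, isolating the base case $N=\vareps_i(b)$, and handling all larger $N$ by induction using the fact that $\e_i=\e_i^*$ in the rectangular region together with the identity $\f_i^{\max}\e_i=\f_i^{\max}$. That last identity is even cleaner than you state, since in $B(-\infty)$ one always has $\e_i b'\neq\emptyset$. One small cosmetic point: your description of $(\e_i^*)^{\vareps_i(b)}b$ as sitting ``at the top of a dashed column'' with the $\f_i$-chain ``extending upward'' reads oddly --- $\f_i$ lowers weight, so the chain runs toward the apex of the triangular region rather than into the dashed part --- but this is just an orientation convention and does not affect the argument.
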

   \begin{proof}
Consider a component of $B(-\infty)$, as shown in \eqref{ii*-pic}. The nodes where $\varphi_i^*(b)=0$ are exactly those that have no dotted or dashed arrows pointing towards them, for instance the element $b$ in the diagram in that section. It is clear form the picture that the two formulas have the same effect on $b$. 
\end{proof}

\section{Rank 2 Affine MV polytopes}\label{sec:rank2}

\subsection{Rank 2 affine root systems} \label{sec:asl2polys}
The root sysems for $\asl_2$ and $A_2^{(2)}$ correspond to the affine Dynkin diagrams 

\begin{center}
\begin{tikzpicture}

\draw 
(-0.1, 0.2) --(0.7,0.2)
(-0.1, 0.1) --(0.7,0.1)

(-0.15,0.15) --(0,0.3)
(-0.15,0.15) --(0,0)

(0.75,0.15) --(0.6,0.3)
(0.75,0.15) --(0.6,0)
;

\draw node at (-0.4, 0.15) {$\bullet$};
\draw node at (1, 0.15) {$\bullet$};

\draw node at (-0.4, -0.35) {$0$};

\draw node at (1.15, -0.35) {$1 \;\;,$};

\draw node at (-1.5, 0) {$\asl_2$:};

\end{tikzpicture}
\qquad \qquad
\begin{tikzpicture}[xscale=-1]

\draw (0, 0.3) --(0.7,0.3)
(-0.1, 0.2) --(0.7,0.2)
(-0.1, 0.1) --(0.7,0.1)
(0, 0) --(0.7,0)

(-0.15,0.15) --(0.1,0.4)
(-0.15,0.15) --(0.1,-0.1)
;

\draw node at (-0.4, 0.15) {$\bullet$};
\draw node at (1, 0.15) {$\bullet$};

\draw node at (-0.6, -0.35) {$1  \;\;.$};

\draw node at (1, -0.35) {$0$};

\draw node at (2.5, 0) {$A_2^{(2)}$:};

\end{tikzpicture}
\end{center}
The corresponding symmetrized Cartan matrices are 
$$\asl_2: \quad N = 
\left(
\begin{array}{rr}
2 & -2 \\
-2 & 2
\end{array}
\right), \qquad
A_2^{(2)}: \quad N = 
\left(
\begin{array}{rr}
8 & -4 \\
-4 & 2
\end{array}
\right).
$$
Denote the simple roots by $\alpha_0, \alpha_1$. Define $\delta = \alpha_0+ \alpha_1$ for $\asl_2$ and $\delta =  \alpha_0+ 2\alpha_1$ for $A_2^{(2)}$. 
Note that we have chosen $\alpha_0$ to be the long root for $\ATT$, which is opposite from  the conventions in \cite{Kac90,BDKT:??}. We have instead followed the convention in \cite{Aka,BN} which is more convenient for the theory of affine PBW bases.

The type $\g$ weight space is a three dimensional vector space containing $\alpha_0, \alpha_1$. This has a standard non-degenerate bilinear form $(\cdot, \cdot)$ such that 
$(\alpha_i, \alpha_j) = N_{i,j}.$
Notice that $(\alpha_0, \delta)= (\alpha_1, \delta)=0$. Fix  fundamental coweights $\omega_0, \omega_1$ which satisfy $( \alpha_i, \omega_j) = \delta_{i,j}$, where we are identifying coweight space with weight space using $(\cdot, \cdot)$. 

The set of positive roots for $\asl_2$ is
\begin{equation} \label{eq:roots}
\{\alpha_0, \alpha_0+\delta, \alpha_0+2\delta, \ldots \} \sqcup \{\alpha_1, \alpha_1+\delta, \alpha_1+2\delta, \ldots \} \sqcup \{\delta, 2\delta, 3\delta \ldots \},
\end{equation}
where the first two families consist of real roots and the third family consists of imaginary roots. 
The set of positive roots for $A_2^{(2)}$ is 
\begin{equation}
 \{   \alpha_1+ k  \delta,   \alpha_0+2 k  \delta,   \alpha_1+  \alpha_0+k  \delta, 2 \alpha_1+ (2k+1) \delta \mid k \geq 0 \} \sqcup  \{ k \delta \mid k \geq 1 \},
\end{equation} 
where the first set consists of real roots and the second set of imaginary roots.
We draw these in the plane as
\begin{center}
\begin{tikzpicture}[scale=0.3]

\draw[line width = 0.05cm, ->] (0,0)--(3,1);
\draw[line width = 0.05cm, ->] (0,0)--(-3,1);

\draw[line width = 0.05cm, ->] (0,0)--(3,3);
\draw[line width = 0.05cm, ->] (0,0)--(-3,3);

\draw[line width = 0.05cm, ->] (0,0)--(3,5);
\draw[line width = 0.05cm, ->] (0,0)--(-3,5);

\draw[line width = 0.05cm, ->] (0,0)--(3,7);
\draw[line width = 0.05cm, ->] (0,0)--(-3, 7);

\draw[line width = 0.05cm, ->] (0,0)--(0,2);
\draw[line width = 0.05cm, ->] (0,0)--(0,4);
\draw[line width = 0.05cm, ->] (0,0)--(0,6);

\draw node at (0,8) {.};
\draw node at (0,7.6) {.};
\draw node at (0,7.2) {.};

\draw node at (-3,8.8) {.};
\draw node at (-3,8.4) {.};
\draw node at (-3,8) {.};

\draw node at (3,8.8) {.};
\draw node at (3,8.4) {.};
\draw node at (3,8) {.};

\draw node at (-4.9,8.8) {.};
\draw node at (-4.9,8.4) {.};
\draw node at (-4.9,8) {.};

\draw node at (4.9,8.8) {.};
\draw node at (4.9,8.4) {.};
\draw node at (4.9,8) {.};

\draw node at (-4,1) {{\small $ \alpha_0$}};
\draw node at (-4.8,3) {{\small $\alpha_0+\delta$}};
\draw node at (-5.1,5) {{\small $\alpha_0+2\delta$}};
\draw node at (-5.1,7) {{\small $\alpha_0+3\delta$}};

\draw node at (4,1) {{\small $\alpha_1$}};
\draw node at (4.8,3) {{\small $\alpha_1+\delta$}};
\draw node at (5.1,5) {{\small $\alpha_1+2\delta$}};
\draw node at (5.1,7) {{\small $\alpha_1+3\delta$}};

\draw node at (0,9) {{\small $k \delta$}};

\draw node at (0,-2.2) {$\asl_2$};

\end{tikzpicture}
$\qquad \qquad$
\begin{tikzpicture}[scale=0.23, xscale=-1]

\draw[line width = 0.05cm, ->] (0,0)--(-3,1);

\draw[line width = 0.05cm, ->] (0,0)--(6,2);
\draw[line width = 0.05cm, ->] (0,0)--(3,3);
\draw[line width = 0.05cm, ->] (0,0)--(0,4);
\draw[line width = 0.05cm, ->] (0,0)--(-3,5);
\draw[line width = 0.05cm, ->] (0,0)--(-6,6);

\draw[line width = 0.05cm, ->] (0,0)--(3,7);
\draw[line width = 0.05cm, ->] (0,0)--(0,8);
\draw[line width = 0.05cm, ->] (0,0)--(-3,9);

\draw[line width = 0.05cm, ->] (0,0)--(6,10);
\draw[line width = 0.05cm, ->] (0,0)--(3,11);
\draw[line width = 0.05cm, ->] (0,0)--(0,12);
\draw[line width = 0.05cm, ->] (0,0)--(-3,13);
\draw[line width = 0.05cm, ->] (0,0)--(-6,14);

\draw node at (0,16) {.};
\draw node at (0,15.6) {.};
\draw node at (0,15.2) {.};

\draw node at (-4,1) {{\small $ \alpha_1$}};
\draw node at (-9,6) {{\small $2 \alpha_1 +  \delta$}};
\draw node at (-9.5,14) {{\small $2 \alpha_1 + 3  \delta$}};

\draw node at (7,2) {{\small $ \alpha_0$}};
\draw node at (8.8,10) {{\small $ \alpha_0 + 2  \delta$}};

\draw node at (0,13.5) {{\small $k  \delta$}};

\draw node at (0,-2.5) {$A_2^{(2)}$};

\end{tikzpicture}
\end{center}

\subsection{Lusztig data and pseudo-Weyl polytopes}

\begin{Definition}
A {\bf Lusztig datum} $\cc$ is a choice of a non-negative integer $\cc_\beta$ for each positive real root $\beta$, all but finitely many of which are $0$, and a partition $\cc_\delta$. The {\bf weight}  of $\cc$ is $\wt(\cc) = |\cc_\delta| \cdot \delta+ \sum_{\beta \text{ real}} \cc_\beta \cdot \beta$.
\end{Definition}   

\begin{Remark}
Notice that the data of a partition $\cc_\delta$ is equivalent to the data of a number $\cc'_{k \delta}$ for each positive imaginary root $k \delta$, where $\cc'_{k \delta}$ is the number of parts of $\cc_\delta$ of size exactly $k.$ Thus a Lusztig datum is equivalent to a Kostant partition.  
\end{Remark}

If a Lusztig datum $\cc$ has $\cc_\beta=0$ for all positive real roots, we call $\cc$ {\bf purely imaginary}. We will sometimes abuse notation and write simply $\lambda$ to denote the purely imaginary Lusztig datum $\cc$ with $\cc_\delta=\lambda$.

\begin{Definition}{\label{DecoratedPseudoWeylPolytope}}
For either $\asl_2$ or $\ATT$, a {\bf decorated pseudo-Weyl polytope} $P$ consists of a pair of Lusztig data $\lbc(P)$ and $\rbc(P)$ of the same weight. This weight is called the weight of $P$. 
\end{Definition}

\begin{Remark}
What we are calling a decorated pseudo-Weyl polytope is called a {\bf decorated GGMS polytope} in \cite{BKT} and \cite{BDKT:??}. Our terminology originates from \cite{Kam}, where the notion of pseudo-Weyl polytope is defined in finite type.
\end{Remark}

In order to describe these geometrically (and justify the word polytope) we need some notation. 
Label the positive real roots by $\beta_k, \beta^k$ for $k \in \bz_{>0}$ as follows:
\begin{itemize}
\item For $\asl_2$:
$\beta_k = \alpha_1 + (k-1) \delta$ and $\beta^k = \alpha_0 + (k-1) \delta$. 

\item For $A_2^{(2)}$: 
 $$
 \beta_k=\begin{cases}
 \alpha_1+\frac{k-1}{2}\delta&\text{if $k$ is odd,}\\
 2 \alpha_1+(k-1)\delta&\text{if $k$ is even,}
 \end{cases}
 \qquad
 \beta^k=\begin{cases}
 \alpha_0+(k-1)\delta&\text{if $k$ is odd,}\\
 \alpha_0+\alpha_1+\frac{k-2}{2}\delta&\text{if $k$ is even.}
 \end{cases}
$$
\end{itemize}

To a decorated pseudo-Weyl polytope $P$, we associate an underlying polytope (up to translation) in the root lattice whose vertices $\{\mu^r_k,\mu^{r,k},\mu^\ell_k,\mu^{r,k}\}$ are defined by:
\begin{equation}
\begin{aligned}
& \mu^r_0 = \mu^\ell_0 \\
& \mu^r_k-\mu^r_{k-1} = \rbc_{\beta_k} \cdot \beta_k, \quad \mu^{r,k-1}-\mu^{r,k} = \rbc_{\beta^k} \cdot \beta^k \\
& \mu^\ell_k-\mu^\ell_{k-1} = \lbc_{\beta^k} \cdot \beta^k, \quad \mu^{\ell,k-1}-\mu^{\ell,k} = \lbc_{\beta_k} \cdot \beta_k. 
\end{aligned}
\end{equation}
Because Lusztig data take the value zero on all but finitely many roots, the vertices $\mu^r_k$ must all coincide for sufficiently large $k$, as must the vertices $\mu^{r,k},$ $\mu^\ell_k$, $\mu^{\ell,k}$. We denote 
\begin{equation}
\mu^r_\infty = \lim_{k \rightarrow \infty} \mu^r_k, \; \;\mu^{r,\infty} = \lim_{k \rightarrow \infty} \mu^{r,k}, \;\;
\mu^\ell_\infty = \lim_{k \rightarrow \infty} \mu^\ell_k, \;\;  \mu^{\ell,\infty} = \lim_{k \rightarrow \infty} \mu^{\ell,k}.
\end{equation}
See Figure \ref{MVpoly}.

\subsection{Definition and characterization of MV polytopes}

The following definition can be found in \cite{BDKT:??}, although we have changed some noation. 
\begin{Definition} \label{def:characterization} An {\bf MV polytope} is a decorated pseudo-Weyl polytope such that
\begin{enumerate}
\item \label{top-part} For each $k \geq 2$, $\max \{ ( \mu^\ell_{k}-\mu^r_{k-1}, \omega_1), (\mu^r_{k} -\mu^\ell_{k-1}, \omega_0) \} = 0$.

\item \label{bottom-part}  For each $k \geq 2$, $\min \{ (\mu^{\ell,k} -\mu^{r, k-1}, \omega_0), (\mu^{r,k} -\mu^{\ell, k-1}, \omega_1) \}= 0$.

\item \label{part:middle1} If the vectors $\mu^r_\infty -  \mu^\ell_\infty$ and $\mu^{r,\infty}- \mu^{\ell,\infty}$ are parallel, then $\rbc_\delta=\lbc_\delta$. Otherwise, one is obtained from the other by removing a part of size $ \frac{|\alpha_1|}{2|\alpha_0|}(\mu^r_\infty - \mu^\ell_\infty, \alpha_1)$. 

\item \label{part:middle2} $ (\rbc_\delta)_1, (\lbc_\delta)_1  \leq\frac{|\alpha_1|}{2|\alpha_0|}(\mu^r_\infty - \mu^\ell_\infty, \alpha_1),$ where e.g. $(\rbc_\delta)_1$ denotes the largest part of the partition $\rbc_\delta$. 
\end{enumerate}
See Figure \ref{MVpoly}. Let $\MV$ denote the set of MV polytopes.
\end{Definition}

\begin{figure}[ht]

\begin{center}
\begin{tikzpicture}[yscale=0.2, xscale=0.6]

\draw [line width = 0.01cm, color=gray] (-1.5,2.5) -- (3.5, 7.5);
\draw [line width = 0.01cm, color=gray] (-2,4) -- (4, 10);
\draw [line width = 0.01cm, color=gray] (-2.5,5.5) -- (4, 12);
\draw [line width = 0.01cm, color=gray] (-3.3333,8.6666) -- (4, 16);
\draw [line width = 0.01cm, color=gray] (-3.6666,10.3333) -- (4, 18);
\draw [line width = 0.01cm, color=gray] (-4.25,13.75) -- (4, 22);
\draw [line width = 0.01cm, color=gray] (-4.5,15.55) -- (4, 24);
\draw [line width = 0.01cm, color=gray] (-4.75,17.25) -- (4, 26);
\draw [line width = 0.01cm, color=gray] (-5,19) -- (4, 28);
\draw [line width = 0.01cm, color=gray] (-5,21) -- (4, 30);
\draw [line width = 0.01cm, color=gray] (-5,23) -- (4, 32);
\draw [line width = 0.01cm, color=gray] (-5,25) -- (4, 34);
\draw [line width = 0.01cm, color=gray] (-4.6666,29.3333) -- (3.6666, 37.6666);
\draw [line width = 0.01cm, color=gray] (-4.3333,31.6666) -- (3.33333, 39.3333);

\draw [line width = 0.01cm, color=gray] (1,1) -- (-2,4);
\draw [line width = 0.01cm, color=gray]  (2,2)-- (-3,7);
\draw [line width = 0.01cm, color=gray] (2.5,3.5) -- (-3.5,9.5);
\draw [line width = 0.01cm, color=gray] (3.3333,6.6666) -- (-4.3333,14.3333);
\draw [line width = 0.01cm, color=gray] (3.6666,8.3333) -- (-4.6666,16.6666);
\draw [line width = 0.01cm, color=gray] (4,12) -- (-5,21);
\draw [line width = 0.01cm, color=gray] (4,14) -- (-5,23);
\draw [line width = 0.01cm, color=gray] (4,16) -- (-5,25);
\draw [line width = 0.01cm, color=gray] (4,18) -- (-5,27);

\draw [line width = 0.01cm, color=gray] (4,20) -- (-4.75,28.75);
\draw [line width = 0.01cm, color=gray] (4,22) -- (-4.5,30.5);
\draw [line width = 0.01cm, color=gray] (4,24) -- (-4.25,32.25);
\draw [line width = 0.01cm, color=gray] (4,28) -- (-3.5,35.5);

\draw [line width = 0.01cm, color=gray] (4,32) -- (-2,38);
\draw [line width = 0.01cm, color=gray] (4,34) -- (-1,39);
\draw [line width = 0.01cm, color=gray] (4,36) -- (0,40);
\draw [line width = 0.01cm, color=gray] (3.5,38.5) -- (1,41);

\draw [line width = 0.01cm, color=gray] (-3,7) -- (4, 14);
\draw [line width = 0.01cm, color=gray] (-4,12) -- (4, 20);

\draw [line width = 0.01cm, color=gray] (-3,37) -- (4, 30);
\draw [line width = 0.01cm, color=gray] (-4,34) -- (4, 26);
\draw [line width = 0.01cm, color=gray] (-5,27) -- (4, 18);

\draw (0,0) node {$\bullet$};
\draw (-1,1) node {$\bullet$};
\draw (-3,7) node {$\bullet$};
\draw (-4,12) node  {$\bullet$};
\draw (-5,19) node  {$\bullet$};
\draw (-5,23) node {$\bullet$};
\draw (-5,25) node {$\bullet$};
\draw  (-5,27) node {$\bullet$};
\draw (-4,34) node {$\bullet$};
\draw (-3,37) node {$\bullet$};
\draw  (2,42)  node {$\bullet$};
\draw (2,2)  node {$\bullet$};
\draw (3,5) node {$\bullet$};
\draw (4,10) node {$\bullet$};
\draw (4,28) node {$\bullet$};
\draw (4,28) node {$\bullet$};
\draw (4,32) node {$\bullet$};
\draw (4,34) node {$\bullet$};
\draw (4,36) node {$\bullet$};
\draw (3,41) node {$\bullet$};

\draw [line width = 0.04cm] (0,0)--(-1,1);
\draw  [line width = 0.04cm] (-1,1)--(-3,7);
\draw [line width = 0.04cm] (-3,7)--(-4,12);
\draw [line width = 0.04cm] (-4,12)--(-5,19);

\draw [line width = 0.04cm] (-5,19)--(-5,27);
\draw [line width = 0.04cm] (-5,27)--(-4,34);
\draw [line width = 0.04cm] (-4,34)--(-3,37);
\draw [line width = 0.04cm](-3,37)--(2,42);
\draw [line width = 0.04cm](0,0)--(2,2);
\draw [line width = 0.04cm] (2,2)--(3,5);
\draw [line width = 0.04cm] (3,5)--(4,10);
\draw [line width = 0.04cm] (4,10)--(4,36);
\draw [line width = 0.04cm](4,36)--(3,41);
\draw [line width = 0.04cm] (3,41)--(2,42);

\draw [line width = 0.01cm, color=gray] (-1,1) -- (3,5);
\draw [line width = 0.01cm, color=gray] (3,5) -- (-4,12);
\draw [line width = 0.01cm, color=gray] (4,10) -- (-5,19);
\draw [line width = 0.01cm, color=gray] (-5,27) -- (4,36);
\draw [line width = 0.01cm, color=gray] (-4,34) -- (3,41);

\draw (1,1) node {\tiny $\bullet$};

\draw (1.2,-0.35) node {\tiny $\alpha_1$};
\draw (3,2.7) node {\tiny $\alpha_1+\delta$};
\draw (4.4,7) node {\tiny $\alpha_1+2\delta$};
\draw (4.6,23) node {\tiny $\delta$};
\draw (4.4,39) node {\tiny $\alpha_0+2\delta$};
\draw (2.8,42.5) node {\tiny $\alpha_0$};

\draw (-0.5,41) node {\tiny $\alpha_1$};
\draw (-4.15,36.3) node {\tiny $\alpha_1+\delta$};
\draw (-5.5,31) node {\tiny $\alpha_1+3\delta$};
\draw (-5.5,23) node {\tiny $\delta$};
\draw (-5.5,15) node {\tiny $\alpha_0+3\delta$};
\draw (-4.5,9) node {\tiny $\alpha_0+2\delta$};
\draw (-3,3.5) node {\tiny $\alpha_0+\delta$};
\draw (-0.7,-0.7) node {\tiny $\alpha_0$};

\draw(0,-1.2) node {$\mu_0$};
\draw(2.3,0.3) node {$\mu^r_1$};
\draw(3.5,4.5) node {$\mu^r_2$};
\draw(7.4,10) node {$\mu^r_3= \mu^r_4 = \cdots = \mu^r_\infty$};
\draw(7.7,36) node {$\mu^{r,3}= \mu^{r,4} = \cdots = \mu^{r,\infty}$};
\draw(4.9,41.7) node {$\mu^{r,1}= \mu^{r,2}$};
\draw(-1.4,0) node {$\mu^\ell_1$};
\draw(-3.5,6) node {$\mu^\ell_2$};
\draw(-4.5,12) node {$\mu^\ell_3$};
\draw(-8.4,19) node {$\mu^\ell_\infty = \cdots = \mu^\ell_5=  \mu^\ell_4$};
\draw(-8.7,27) node {$\mu^{\ell,\infty} = \cdots = \mu^{\ell,5}= \mu^{\ell,4}$};
\draw(-5.7,34) node {$\mu^{\ell,3}= \mu^{\ell,2}$};
\draw(-3.2,38.2) node {$\mu^{\ell,1}$};
\draw(2.1,43.7) node {$\mu^{0}$};

\draw[line width = 0.03cm, ->] (0,-4)--(1,-3);
\draw[line width = 0.03cm, ->] (0,-4)--(-1,-3);
\draw (1.5,-2.6) node {$\alpha_1$};
\draw (-1.5,-2.6) node {$\alpha_0$};

\end{tikzpicture}

\end{center}

\caption{\label{MVpoly}An $\asl_2$ MV polytope. The partitions labeling the vertical edges are indicated by including extra vertices on the vertical edges, such that the edge is cut into the pieces indicated by the partition. 
Here:
$$ \hspace{-3cm} \rbc_{\alpha_1}=2, \; \rbc_{\alpha_1+\delta}=1, \; \rbc_{\alpha_1+2\delta} =1,\; \rbc_\delta= (9,2,1,1), \; \rbc_{\alpha_0+2\delta}=1, \; \rbc_{\alpha_0}=1, $$
$$\hspace{-2.5cm} \lbc_{\alpha_0} =1,  \lbc_{\alpha_0+\delta} = 2,  \lbc_{\alpha_0+2\delta}=1,  \lbc_{\alpha_0+3\delta} = 1, \lbc_\delta =(2,1,1), \lbc_{\alpha_1+3\delta}=1, \lbc_{\alpha_1+\delta}=1,  \lbc_{\alpha_1}=5.$$
}

\end{figure}

\begin{Theorem} \cite{BDKT:??} \label{thm:lusztig-data}
For each Lusztig datum ${\bf c}$, there is a unique $P^r_{\bf c} \in \MV$ such that the right Lusztig data of $P^r_{\bf c}$ is given by ${\bf c}$. Similarly, there is a unique $P^\ell_{\bf c} \in \MV$ whose left Lusztig data is given by $\bf c$. \qed
\end{Theorem}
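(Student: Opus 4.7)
The plan is to construct $P^r_{\bf c}$ explicitly by determining the left Lusztig datum $\lbc$ from $\rbc = {\bf c}$, proceeding in stages that mirror the defining conditions (i)--(iv). Uniqueness comes for free once the recipe is shown to exist, since each stage is forced; the content of the theorem is existence.

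First, I would build the real-root components of $\lbc$ (that is, $\lbc_{\beta^k}$ and $\lbc_{\beta_k}$ for all $k$) inductively using the tropical Pl\"ucker relations (i) and (ii). Setting $\mu^\ell_0 = \mu^r_0$ and using the right-side vertices $\mu^r_k, \mu^{r,k}$ determined from $\rbc$, I would determine $\mu^\ell_k$ at each $k \geq 1$ from condition (i) at the next level. The key observation is that $(\mu^r_k - \mu^\ell_{k-1}, \omega_0)$ is independent of the unknown $\lbc_{\beta^k}$, while $(\mu^\ell_k - \mu^r_{k-1}, \omega_1)$ depends linearly on it, so the ``$\max=0$" equation is piecewise-linear in the single variable $\lbc_{\beta^k}$ and (together with the non-negativity and integrality required of Lusztig data) has a unique solution. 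This is the rank-2 affine analogue of Lusztig's piecewise-linear transition maps between PBW bases; the dual condition (ii) determines $\lbc_{\beta_k}$ in the same way. Because $\rbc$ has finite support, the iteration stabilizes at $\lbc_{\beta^k} = 0$ for all sufficiently large $k$.

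Once the real parts of $\lbc$ are in hand, the vertices $\mu^r_\infty, \mu^\ell_\infty, \mu^{r,\infty}, \mu^{\ell,\infty}$ are all determined, and conditions (iii)--(iv) pin down the imaginary partition $\lbc_\delta$: condition (iii) says either $\lbc_\delta = \rbc_\delta$, or they differ by a single part of the prescribed size $\frac{|\alpha_1|}{2|\alpha_0|}(\mu^r_\infty - \mu^\ell_\infty, \alpha_1)$, and (iv) selects which of $\rbc_\delta, \lbc_\delta$ is the larger one. The hardest step will be verifying the tropical Pl\"ucker recursion: one must check that it always yields non-negative integer values of $\lbc_{\beta^k}$ and $\lbc_{\beta_k}$, and that the resulting polytope closes up so that $\wt(\lbc) = \wt(\rbc)$ and condition (iv) holds for the constructed $\lbc$. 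Non-negativity should follow from a case analysis of which term in the max attains zero at each level (using positivity of $\rbc$ as input), closure follows by telescoping the edge vectors around the polytope, and (iv) for $\lbc$ follows from (iv) for $\rbc$ combined with the construction of $\lbc_\delta$ in the imaginary step. The existence of $P^\ell_{\bf c}$ is proved by the symmetric argument; in the $\asl_2$ case one can also invoke the Dynkin involution $\tau$, which exchanges left and right.
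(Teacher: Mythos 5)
This statement is quoted from \cite{BDKT:??} and the present paper gives no proof of it (the \verb|\qed| indicates the theorem is simply cited), so there is no internal argument to compare against; what follows is an assessment of your proposal on its own terms.

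Your high-level plan — solve the defining conditions (i)--(iv) of Definition~\ref{def:characterization} for $\lbc$ given $\rbc$ — is the right picture, and your remark that this is the rank-2 affine analogue of Lusztig's piecewise-linear transition maps is exactly the correct way to think about it. However, the central claim that \emph{each stage of the recursion is forced} is not correct, and this is a genuine gap rather than an omitted detail. Writing $T_1^{(k)} = (\mu^\ell_k - \mu^r_{k-1},\omega_1)$ and $T_2^{(k)} = (\mu^r_k - \mu^\ell_{k-1},\omega_0)$, one computes (in type $\asl_2$) that $T_1^{(k)}$ depends on $\lbc_{\beta^2},\dots,\lbc_{\beta^k}$ with the coefficient of $\lbc_{\beta^k}$ equal to $k-1>0$, while $T_2^{(k)}$ depends on $\lbc_{\beta^1},\dots,\lbc_{\beta^{k-1}}$. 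Two problems follow. First, condition (i) is only imposed for $k\geq 2$, and $\lbc_{\beta^1}$ appears only in the ``known'' term $T_2^{(2)}$, never as the unknown in any single level; so your recursion has no way to start, and in fact $\lbc_{\beta^1}=\lbc_{\alpha_0}$ is only pinned down by a global interaction with the weight constraint and the remaining conditions. Second, even for $k\geq 2$, if $T_2^{(k)}=0$ then $\max\{T_1^{(k)},T_2^{(k)}\}=0$ merely bounds $\lbc_{\beta^k}$ from above (it forces $T_1^{(k)}\leq 0$), and non-negativity bounds it from below by $0$; this leaves a finite interval of admissible values, not a unique one. This degenerate case is not pathological — it already occurs when $\rbc$ is supported near $\alpha_1$ — so one cannot treat it as a measure-zero exception.

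Thus your proof as written establishes neither uniqueness (the stages are not forced) nor existence (you would need to show that \emph{some} consistent global choice exists and yields non-negative integers satisfying the weight constraint and (iii)--(iv)). A workable version of your strategy must couple the top and bottom recursions, the weight constraint, and the imaginary conditions (iii)--(iv) simultaneously, rather than treating them in sequence; this is essentially what the cited proof does via an explicit piecewise-linear transition formula whose output is then verified to satisfy Definition~\ref{def:characterization}. I would also flag that deducing uniqueness ``for free'' from existence is a non sequitur — uniqueness requires showing any two admissible $\lbc$'s agree, which is a separate (if related) argument once the step-by-step determinacy fails.
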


\begin{Definition} \label{def:crystal-ops}
Fix $P \in \MV$ with right and left Lusztig data ${\rbc}$ and ${\lbc}$ respectively. Then
$\e_0(P)$ is the MV polytope with right Lusztig data $\e_0({\rbc})$ and $\e_1(P)$ is the MV polytope with left Lusztig datum $\e_1({\lbc})$, where $\e_0(\rbc)$ agrees with $\rbc$ except that $\e_0({\rbc})_{\alpha_0}= \rbc_{\alpha_0}+1$, and $\e_1({\lbc})$ agrees with ${\lbc}$ except that $\e_1({\lbc})_{\alpha_1} = \lbc_{\alpha_1}+1$. 

Similarly, $\f_0(P)$ is the MV polytope with right data $\f_0({\rbc})$ and $\f_1(P)$ is the MV polytope with left Lusztig data $\f_1({\lbc})$, where $\f_0(\rbc)$ agrees with $\rbc$ except that $\f_0({\rbc})_{\alpha_0}= \rbc_{\alpha_0}-1$ and $\f_1({\lbc})$ agrees with ${\lbc}$ except that $\f_1({\lbc})_{\alpha_1} = \lbc_{\alpha_1}-1$; if $\rbc_{\alpha_0}$ or $\lbc_{\alpha_1}$ are zero then $\f_0$ or $\f_1$ sends that polytope to $\emptyset$.
\end{Definition}

\begin{Theorem} \cite{BDKT:??}
$\MV$ along with the operators $\e_0, \f_0, \e_1, \f_1$ realizes $B(-\infty)$.  \qed
\end{Theorem}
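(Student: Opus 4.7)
The plan is to verify, using the bijection between $\MV$ and Lusztig data from Theorem \ref{thm:lusztig-data}, the axioms of Kashiwara and Saito that uniquely characterize $B(-\infty)$. First I would identify the trivial polytope (both Lusztig data zero) as the highest weight element $b_0$ and define $\wt(P) = -\wt(\rbc(P))$ on $\MV$. Then I would check that the operators $\e_0, \f_0, \e_1, \f_1$ of Definition \ref{def:crystal-ops} preserve $\MV$: modifying the $\alpha_0$ entry of the right Lusztig datum (or the $\alpha_1$ entry of the left) produces a Lusztig datum that again corresponds to an MV polytope, which reduces to verifying conditions \eqref{top-part}--\eqref{part:middle2} for the new polytope. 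Only the outermost edges are affected, so this is essentially a rank-one check.

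Next I would introduce the natural involution $*$ on $\MV$ that swaps left and right Lusztig data, inducing dual operators $\e_i^* = * \circ \e_i \circ *$ and $\f_i^* = * \circ \f_i \circ *$. Together with $\e_i,\f_i$ these give a collection of crystal-like operators, and I would verify that each $\{\e_i, \f_i, \e_i^*, \f_i^*\}$-component has the ladder structure shown in \eqref{ii*-pic}. The statistics $\varphi_i(P), \varepsilon_i(P), \varphi_i^*(P), \varepsilon_i^*(P)$ can be read off directly from the Lusztig data, and the axioms of \cite[Proposition 3.2.3]{KS:1997} follow by a consistency check along these ladders. Connectedness of $\MV$ under $\{\e_i, \e_i^*\}$ from $b_0$ I would prove by inducting on $\wt(P)$, using the Saito reflections of Proposition \ref{prop:ref} (realized geometrically on $\MV$ by rotating the polytope about the imaginary axis) to reduce to smaller polytopes.

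The main obstacle, I expect, is the imaginary-direction analysis: since Definition \ref{def:crystal-ops} only modifies real-root entries, the partitions $\rbc_\delta$ and $\lbc_\delta$ change only through compositions of crystal operators that rotate the polytope past the imaginary cross-section. Tracking this evolution and ensuring consistency with the Kashiwara--Saito axioms requires delicate use of conditions \eqref{part:middle1}--\eqref{part:middle2}, which dictate exactly how the two imaginary partitions relate across the middle of the polytope. Verifying that Saito reflections acting on $\MV$ really do change the partition data in the prescribed way, and that the rank-one reductions assemble into a globally consistent crystal, is the technical heart of the argument.
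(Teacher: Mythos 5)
The paper itself gives no proof of this theorem; it is quoted verbatim from \cite{BDKT:??} with a \qed, so there is nothing in this paper to compare against directly. What you propose — verify the Kashiwara--Saito axioms for the operators of Definition \ref{def:crystal-ops} together with the $*$-involution that swaps left and right Lusztig data — is indeed the broad strategy carried out in \cite{BDKT:??}, so the outline is aligned with the cited source. That said, a few things in your sketch need correction or more substance before it could be called a proof.

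First, a sign: with the conventions of \S\ref{ss:crystals} (where $\wt(b_0)=0$ and $\e_i$ has weight $\alpha_i$, so weights in $B(-\infty)$ lie in the positive root lattice), the weight of a polytope should be $\wt(P)=\wt(\rbc(P))$, not its negative. Second, the step where you ``check that $\e_0,\f_0,\e_1,\f_1$ preserve $\MV$'' is not a separate rank-one verification of conditions (i)--(iv) of Definition \ref{def:characterization}: by Theorem \ref{thm:lusztig-data} every Lusztig datum is the right (resp.\ left) datum of a unique MV polytope, so Definition \ref{def:crystal-ops} automatically lands in $\MV$. The substance is elsewhere, namely showing the resulting operators interact correctly. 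Third, your description of the Saito reflection on $\MV$ as ``rotating the polytope about the imaginary axis'' is too vague to carry the induction; what is actually needed is a direct combinatorial definition along the lines of Definition \ref{def:ref-data} (mapping $\rbc$ to $\rbc\circ s_i$ and then using Theorem \ref{thm:lusztig-data} to recover the whole polytope), together with a proof, from the polytope axioms alone and without presupposing the crystal isomorphism, that this operation behaves like $\sigma_i$ — compare Proposition \ref{prop:sig0}, whose proof in this paper invokes results that are only available once the theorem under discussion is already known. The largest gap is the one you yourself flag: the entire interaction of the $\e_i,\e_i^*$-ladders with the imaginary partitions $\rbc_\delta,\lbc_\delta$, which is where conditions (iii)--(iv) of Definition \ref{def:characterization} do their work. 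Your proposal names this as the ``technical heart'' but offers no mechanism for it, and it is precisely there that the real content of \cite{BDKT:??} lies; as written this is a plausible outline, not a proof.
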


The following describes how Saito reflections act on MV polytopes. 

\begin{Proposition} \label{prop:sig0}
For any $b \in B(-\infty)$ with $\varphi_0(b)=0$,  we have $\lbc_{\alpha_0}(MV_{\sigma_0(b)})=0$, and for all other $\alpha$, $\lbc_{\alpha}(MV_{\sigma_0(b)}) = \rbc_{s_0\alpha}(MV_b)$. 
\end{Proposition}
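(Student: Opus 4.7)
The hypothesis $\varphi_0(b)=0$ is equivalent, via Definition~\ref{def:crystal-ops}, to $\rbc_{\alpha_0}(MV_b)=0$. This is precisely the condition under which the proposed assignment $\alpha\mapsto\rbc_{s_0\alpha}(MV_b)$ makes sense as the non-$\alpha_0$ part of a Lusztig datum: $s_0$ sends $\alpha_0$ to the negative root $-\alpha_0$ but permutes the remaining positive real roots among themselves and fixes $\delta$, so the only component that could cause trouble when reflected is the one at $\alpha_0$, which vanishes by assumption.

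The plan is to show that the MV polytope $P$ with left Lusztig datum given by the proposed formula (whose existence and uniqueness follow from Theorem~\ref{thm:lusztig-data}) coincides with $MV_{\sigma_0 b}$. Geometrically, $P$ is obtained by taking the right path of $MV_b$, reflecting it through $s_0$, and using the result as the left path of $P$. The first step is to verify that $P$ is in fact an MV polytope in the sense of Definition~\ref{def:characterization}: conditions (i) and (ii) transfer from the analogous conditions for $MV_b$ by conjugating with $s_0$ (which swaps the roles of $\alpha_0$ and $\alpha_1$, exchanging conditions (i) and (ii)), while conditions (iii) and (iv) transfer using that $s_0\delta=\delta$, so the partitions pass through cleanly and the key length $(\mu^r_\infty-\mu^\ell_\infty,\alpha_1)$ transforms appropriately under the reflection.

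To identify $P$ with $MV_{\sigma_0 b}$, use the formula $\sigma_0 b=(\e_0^*)^{\varepsilon_0(b)}b$ from Definition~\ref{def:ref} and the hypothesis. As a first consistency check, the weights agree: $\wt(P)=s_0\wt(MV_b)=\wt(b)+\varepsilon_0(b)\alpha_0=\wt(\sigma_0 b)$, where the middle equality uses $\varepsilon_0(b)=-\langle\alpha_0^\vee,\wt(b)\rangle$ (a consequence of $\varphi_0(b)=0$) and the last uses that each $\e_0^*$ has weight $\alpha_0$. To match the crystal elements on the nose, I would induct on $\varepsilon_0(b)$. The base case $\varepsilon_0(b)=0$ gives $\sigma_0 b=b$, and the combined conditions $\varphi_0(b)=\varepsilon_0(b)=0$ force $\langle\alpha_0^\vee,\wt(b)\rangle=0$; one then verifies directly from Definition~\ref{def:characterization} that in this case $MV_b$ itself exhibits the claimed $s_0$-symmetry. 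For the inductive step, one uses that $\e_0^*$ preserves the condition $\varphi_0=0$ (visible from~\eqref{ii*-pic}, since $\e_0^*$ then moves along the top edge of the triangle), and compares the action of $\e_0^*$ on the crystal side with the corresponding modification of the MV polytope, accessed via Kashiwara's involution together with Definition~\ref{def:crystal-ops}.

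The main obstacle is understanding the precise action of $\e_0^*$ on the left Lusztig datum of an MV polytope. Because the new polytope must continue to satisfy the MV axioms, this action is not simply ``add $1$ to $\lbc_{\alpha_0}$''; changes to $\lbc$ propagate to other components of both $\lbc$ and $\rbc$, and the exact form of this propagation must be extracted from Definition~\ref{def:characterization}. A secondary difficulty is the base case $\varepsilon_0(b)=0$, which requires a separate direct verification of the $s_0$-symmetry from the MV conditions; here the structural constraint $\langle\alpha_0^\vee,\wt(b)\rangle=0$ combined with condition (i) at the relevant vertices is the key input.
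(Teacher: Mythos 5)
There is a genuine gap, and it is at the very first step: you have identified the wrong element as $\sigma_0(b)$. Definition~\ref{def:ref} defines $\sigma_0$ on elements with $\varphi_0^*(b)=0$; under the hypothesis here, $\varphi_0(b)=0$, the relevant description is the second formula of Proposition~\ref{prop:ref}, namely $\sigma_0(b)=(\f_0^*)^{\max}\,\e_0^N b$ for any sufficiently large $N$, and this is exactly what the paper's proof uses. Your substitute $\sigma_0(b)=(\e_0^*)^{\vareps_0(b)}b$ is a different element of $B(-\infty)$: in the component pictured in \eqref{ii*-pic} your formula stays on the edge where $\varphi_0=0$, while the correct operation lands on the edge where $\varphi_0^*=0$, and the two agree only in degenerate cases. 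The weight computation you offer does not detect the discrepancy, since both elements have weight $s_0\wt(b)$. Worse, your identification is incompatible with the very conclusion you want: since $\e_0^*$ adds $1$ to $\lbc_{\alpha_0}$ and leaves the rest of the left datum unchanged (property (C3)), your candidate has $\lbc_{\alpha_0}=\lbc_{\alpha_0}(MV_b)+\vareps_0(b)$, which is strictly positive already for $b=\e_1 b_0$ in type $\asl_2$ (where $\vareps_0(b)=2$), contradicting $\lbc_{\alpha_0}(MV_{\sigma_0(b)})=0$. Consequently the base case ($\vareps_0(b)=0$ giving $\sigma_0 b=b$) and the whole induction on $\vareps_0(b)$ are aimed at the wrong target.

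Beyond this, the setup has internal problems. The step ``verify that $P$ is an MV polytope by conjugating conditions (i)--(iv) with $s_0$'' is not meaningful as stated: those conditions couple the left and right paths of a single polytope, and you only prescribe the left path of $P$ (its right path is produced, nonconstructively, by Theorem~\ref{thm:lusztig-data} and is not the $s_0$-reflection of any path of $MV_b$); if you invoke Theorem~\ref{thm:lusztig-data}, there is nothing left to verify. Your stated ``main obstacle'' is also misdiagnosed: $\e_0^*$ changes the left datum only at $\alpha_0$ (it is the right datum that changes in a complicated way), but knowing this does not repair the argument. The real missing ingredient --- which the paper's proof supplies --- is the input from \cite[Remark 4.12]{BDKT:??}: for large $N$ the diagonals of $MV_{\e_0^N b}$ become parallel to $\alpha_1$, forcing $\lbc_\alpha(MV_{\e_0^N b})=\rbc_{s_0\alpha}(MV_{\e_0^N b})=\rbc_{s_0\alpha}(MV_b)$ for all $\alpha\neq\alpha_0$; applying $(\f_0^*)^{\max}$ then only resets $\lbc_{\alpha_0}$ to $0$. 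Your sketch contains no replacement for this step.
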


\begin{proof}
Fix $b \in B(-\infty)$. 
By \cite[Remark 4.12]{BDKT:??}, for sufficiently large $N$ and all $k \geq 1$, $MV_{\tilde e_0^Nb}$ has 
$$(\mu^r_{k} -\mu^\ell_{k-1}, \omega_0) \quad \text{and} \quad (\mu^{\ell,k} -\mu^{r, k-1}, \omega_0)=0$$
(this is only stated there for $\asl_2$, but the same proof works for $A_2^{(2)}$). 
Hence each diagonal $(\mu^r_{k}, \mu^\ell_{k-1})$ and $(\mu^{\ell,k} ,\mu^{r, k-1})$ is parallel to $\alpha_1$, from which it is clear that, for all $\alpha \neq \alpha_0$, 
$$\cc^\ell_\alpha(MV_{\tilde e_0^Nb})= \cc^r_{s_0(\alpha)}(MV_{\tilde e_0^Nb})=\cc^r_{s_0(\alpha)}(MV_{b}).$$
The result is then immediate from Proposition \ref{prop:ref} (which says $\sigma_i(b) = (f_i^*)^{max} e_i^N(b)$ for large $N$). 
\end{proof}

\begin{Definition} \label{def:ref-data}
Given an Lusztig datum $\cc$ satisfying $\cc_{\alpha_i}=0$, we define a new Lusztig datum $\cc \circ s_i$ by $(\cc \circ s_i)_{\alpha_i}=0$, $(\cc \circ s_i)_\beta = \cc_{s_i(\beta)}$ and $(\cc \circ s_i)_\delta = \cc_0$.
For $\asl_2$, 
we define $\cc \circ \tau$ by $(\cc \circ \tau)_{\alpha_i + \delta} = \cc_{\alpha_{i+1} +\delta}$ and $(\cc \circ \tau)_\delta = \cc_\delta$. 

\end{Definition}

\begin{Theorem} \label{th:unique-aff} Assume $\g$ is of type $\asl_2$ or $A_2^{(2)}$. There is
  a unique map $b \rightarrow P_b$ from $B(-\infty)$ to type $\g$
  decorated pseudo-Weyl polytopes (considered up to translation) such
  that, for all $b \in B(-\infty)$ and $i=0$ or $1$, the following hold. 
  
    \begin{enumerate}
    \item[(W)] \label{CC1-aff} $\wt(b)= \wt(P_b)$. 

    \item[(C1)] \label{CC2-aff} $\rbc_{\alpha_0}(P_{\e_0 b}) =
      \rbc_{\alpha_0}(P_b)+1$, and for all $\alpha \neq \alpha_0$,
      $\rbc_\alpha(P_{\e_0 b})= \rbc_\alpha(P_b)$. 
     \item[(C2)]  $\lbc_{\alpha_1}(P_{\e_1 b}) = \lbc_{\alpha_1}(P_b)+1$, and  for all $\alpha \neq \alpha_1$, $\lbc_\alpha(P_{\e_1 b})= \lbc_\alpha(P_b)$.
              \item[(C3)] $\lbc_{\alpha_0}(P_{\e_0^*b}) = \lbc_{\alpha_0}(P_b)+1$, and for all $\alpha \neq \alpha_0$,
         $\lbc_\alpha(P_{\e_0^*b})=
         \lbc_\alpha(P_b)$.
     \item[(C4)]  $\rbc_{\alpha_1}(P_{\e_1^*b}) =
        \rbc_{\alpha_1}(P_b)+1$, and  for all $\alpha \neq \alpha_1$,
         $\rbc_\alpha(P_{\e_1^*b})=\rbc_\alpha(P_b)$.

    \item[(S1)] \label{CC4-aff} 
    If $\varphi_0(b)=0$, then 
    $\lbc(P_{\sigma_0(b)})= \rbc(P_b) \circ s_0$.
      \item[(S2)]
	If $\varphi_1(b)=0$, then 
      $\rbc(P_{\sigma_1(b)})= \lbc(P_b) \circ s_1$.
      \item[(S3)]
     If $\varphi^*_0(b)=0$, then 
       $\rbc(P_{\sigma_0^*(b)})=  \lbc(P_b) \circ s_0$.
      \item[(S4)]
	 If $\varphi^*_1(b)=0$, then 
      $\lbc(P_{\sigma^*_1(b)})=\rbc(P_b) \circ s_1$.
      
    \item[(I)] \label{CC5-aff} If $\lbc_\beta(P_b)=0$ for all real roots $\beta$, and $\lbc_\delta(P_b)= \lambda \neq 0$, then $\rbc_{\alpha_1}(P_b)= \frac{|\alpha_0|}{|\alpha_1|} \lambda_1$, $\rbc_\delta(P_{b})=\lambda \backslash \lambda_1$; $\rbc_{\alpha_0}(P_{b}) =\lambda_1$; and $\rbc_\beta(P_b)=0$ for all other $\beta \in \tilde \Delta_+$.     \end{enumerate}
   This map takes each $b \in B(-\infty)$ to its MV polytope $MV_b$ as defined in \S\ref{sec:asl2polys}. 

 \end{Theorem}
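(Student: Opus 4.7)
The plan is to prove existence and uniqueness separately.  Existence means checking that the combinatorial MV polytope map $b \mapsto MV_b$ satisfies all of the listed axioms; uniqueness means showing any map satisfying these axioms must equal the MV polytope map.

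\textbf{Existence.} Each axiom should be a short verification using results stated earlier.  Axiom (W) is tautological.  Axioms (C1) and (C2) are Definition \ref{def:crystal-ops} verbatim.  Axioms (C3) and (C4) follow from (C1) and (C2) by applying Kashiwara's $*$-involution, which on MV polytopes exchanges $\rbc$ with $\lbc$ and $\tilde e_i$ with $\tilde e_i^*$.  Axiom (S1) is the content of Proposition \ref{prop:sig0}, and (S2)--(S4) follow from the same argument combined with the $*$-involution (and the Dynkin diagram automorphism $\tau$ in the $\asl_2$ case).  Finally, axiom (I) is the restriction of parts (3) and (4) of Definition \ref{def:characterization} to decorated polytopes whose left data is purely imaginary.

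\textbf{Uniqueness.} I would argue by induction, with base case $b=b_0$ where (W) forces both Lusztig data to vanish.  For a general $b$, the four simple-root entries of $\rbc(P_b)$ and $\lbc(P_b)$ are determined immediately by the crystal axioms: iterating (C1) down to $\tilde f_0^{\varphi_0(b)} b$ (an element forced by (C1) to have $\rbc_{\alpha_0} = 0$) yields $\rbc_{\alpha_0}(P_b) = \varphi_0(b)$, and the analogous applications of (C2)--(C4) pin down $\lbc_{\alpha_1}(P_b), \lbc_{\alpha_0}(P_b), \rbc_{\alpha_1}(P_b)$.  For a non-simple positive real root $\beta$, I would write $\beta = s_{i_1} \cdots s_{i_k}(\alpha_j)$ as a sequence of simple reflections applied to a simple root, and then iteratively use the Saito axioms (S1)--(S4) to translate the entry $\rbc_\beta(P_b)$ (or $\lbc_\beta(P_b)$) into a simple-root entry of $P_{b'}$ for some $b'$ obtained from $b$ by a composition of Saito reflections.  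Since the real support of any Lusztig datum is finite, this peeling-off process terminates.

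\textbf{Main obstacle.} The hard part is the imaginary direction: Saito reflections essentially fix $\delta$, so axioms (S1)--(S4) alone cannot determine the partitions $\rbc_\delta(P_b)$ and $\lbc_\delta(P_b)$.  This is precisely what axiom (I) is designed to handle.  After the crystal and Saito reductions above, the induction reduces to the case when one side of $P_b$ is purely imaginary, and (I) then specifies the opposite side entirely in terms of the imaginary partition.  Combining this base case with the inductively determined real-root entries pins down $P_b$ up to translation, completing uniqueness.  Since $MV_b$ satisfies all axioms by the existence part, the unique such map is $b \mapsto MV_b$.
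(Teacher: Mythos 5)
Your overall plan matches the paper's proof closely: existence is a check of each axiom against the properties of the combinatorial MV polytope map, and uniqueness is an induction that uses Saito reflections to peel off real-root entries and axiom (I) to handle the purely imaginary case. That said, several steps in the uniqueness argument are not correct as stated or need to be organized more carefully.

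The main gap is your claim that (C1) forces $\tilde f_0^{\varphi_0(b)}b$ to have $\rbc_{\alpha_0}=0$. Axiom (C1) only constrains the \emph{difference} in $\rbc_{\alpha_0}$ along an $\tilde e_0$-string; it tells you $\rbc_{\alpha_0}$ increases by $1$ under $\tilde e_0$, so that along any string the values are $m, m+1, m+2,\ldots$ for some $m\geq 0$, but it does not pin down the offset $m$ at the string's bottom. (Conversely, (C1) together with nonnegativity of Lusztig data does show $\rbc_{\alpha_0}(P_b)=0$ implies $\varphi_0(b)=0$, but that is the wrong direction.) The initial condition $m=0$ actually comes from (S1): for $b$ with $\varphi_0(b)=0$, the right-hand side $\rbc(P_b)\circ s_0$ of (S1) is only defined when $\rbc_{\alpha_0}(P_b)=0$ (see Definition \ref{def:ref-data}), so (S1) is what forces the simple-root entries to vanish at string bottoms. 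This same observation is also what makes the Saito reflections applicable during the peeling-off process: whenever the minimal nonzero real root is $\alpha_i+k\delta$ with $k\geq 1$, one gets $\lbc_{\alpha_i}(P_b)=0$ and hence $\varphi_i(b)=0$ by the (C2)/nonnegativity argument, so (S2) applies.

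Beyond this misattribution, the induction in your sketch needs to be organized explicitly, since ``peeling off'' real-root entries one at a time requires a well-founded ordering. The paper does this by inducting on the partial order on Lusztig data where one first compares $|\cc_\delta|$ and then the number of nonzero real entries; the Saito-plus-crystal reduction strictly decreases this order, so the induction closes. You should also note that axiom (I) is asymmetric: it determines $\rbc(P_b)$ from a purely imaginary $\lbc(P_b)$ but not vice versa, so when $\rbc(P_b)$ is purely imaginary one needs an extra application of (S1) to transfer the problem to the left side (this is the last paragraph of the paper's proof). Finally, to conclude that both sides of the polytope are pinned down once one side is known, you implicitly need Theorem \ref{thm:lusztig-data} (uniqueness of MV polytopes with a given one-sided Lusztig datum); this should be made explicit in the purely imaginary step.
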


\begin{Remark} \label{rem:K} One can easily see that 
Theorem \ref{th:unique-aff} remains true if (C3), (C4), (S3) and (S4) are replaced with the single condition
\begin{enumerate}
\item[(K)] for all $b$, $P_{b^*}= - P_b$ (up to translation) where $*$ is Kashiwara's involution. 
 \end{enumerate}
This gives a shorter statement, but the version given here is a-priori stronger. 
\end{Remark}

\begin{Remark}
A simplified version of Theorem \ref{th:unique-aff} also holds in finite-type rank-2 cases, where one only needs conditions (W), (C1), (C2), (S1) and (S2). This was previously observed in \cite{BK??} (see the discussion just before Remark 27). The proof is also contained in the proof of Theorem \ref{th:unique-aff} below. 
\end{Remark}

\begin{Remark}
The combinatorics from \cite{BDKT:??} is only used in the existence part of the proof below. One could instead prove existence using the construction of these polytopes describe in \S\ref{sec:pbw} or \S\ref{sec:quiver}, so our characterization does not fundamentally rely on \cite{BDKT:??}.
\end{Remark}

\begin{proof}[Proof of Theorem \ref{th:unique-aff}] 
We give the details of the proof only for the case $\asl_2$; the case of $A_2^{(2)}$ proceeds by the same argument, but the notation gets messier. 

We first show that the map $b \rightarrow MV_b$ has all the required properties. Properties (W) and (C1)-(C4) are immediate from the definitions of the crystal operators in \S\ref{sec:asl2polys}. Property (S1) is Proposition \ref{prop:sig0}, and (S2)-(S4) follow by symmetric arguments. 

To see (I), notice that 
\begin{equation} \label{eq:trap}
\begin{tikzpicture}[xscale=0.6, yscale=-0.2]
\draw

(0,0) node {$\bullet$}
(2,2) node {$\bullet$}
(2,8) node {$\bullet$}
(0,10) node {$\bullet$};

\draw[line width = 0.05cm] 
(0,0)--(2,2)--(2,8)--(0,10)--cycle;

\draw
(1.8,-0.6) node {$\rbc_{\alpha_0} = \lambda_1$}
(3.8,5) node {$\rbc_\delta= \lambda \backslash \lambda_1$}
(-1.6,5) node {$\lbc_\delta=\lambda$}
(1.8,10.8) node {$\rbc_{\alpha_1}=\lambda_1$};

\end{tikzpicture}
\end{equation}
is an MV polytopes according to Definition \ref{def:characterization}. Since by Theorem \ref{thm:lusztig-data} there is exactly one MV polytope with each right (or left) Lusztig datum, (I) follows.

Now suppose we have a map $b \rightarrow P_b$ that satisfies the conditions above. 
It suffices to show that $\rbc(P_b) = \rbc(MV_b)$ and $\lbc(P_b) = \lbc(MV_b)$ for all $b$.
We proceed by induction, the base case where $b$ is the lowest weight element being trivial by (W). 
Consider the partial order on Lusztig data where ${\ba} < \tilde \ba$ if 
\begin{itemize}
\item $|\ba_\delta| < |\tilde\ba_\delta|$, or
\item $|\ba_\delta| = |\tilde \ba_\delta|$ and $\# \{ \alpha \mid \ba_\alpha \neq 0 \} < \# \{ \alpha \mid \tilde \ba_\alpha \neq 0 \}.$
\end{itemize}
Fix a Lusztig datum $\cc$, and make the inductive hypothesis that  $\rbc(P_b) = \rbc(MV_b)$ whenever $\rbc (P_b) < \cc$ and that $\lbc(P_b) = \lbc(MV_b)$ whenever $\lbc(P_b)< \cc$. We will show that, if an element $b_\cc \in B(-\infty)$ has $\lbc(P_{b_\cc})=\cc$, then $\lbc(MV_{b_\cc})=\cc$ as well.

First assume that $\ba_\alpha \neq 0$ for some real root $\alpha$. Find $k$ minimal such that $\ba_{\alpha_0+k \delta}$ or $\ba_{\alpha_1+k \delta}$ is non-zero. We proceed in the case  $\ba_{\alpha_1+k \delta} \neq 0$, the other case following by a similar argument (using $*$ reflections and operators instead of the unstarred ones). Let
$$b' = \f_{k+1}^{\ba_{\alpha_1+k \delta}} \sigma_k \cdots \sigma_1  b_{\bf c},$$
where we read subscripts modulo $2$. If $k$ is even, it follows from axioms (S1) and (S2) and (C2)  that 
$$\lbc(P_{b_{\bf c}})=\lbc(MV_{b_{\bf c}})  \text{ if and only if } \lbc(P_{b'})= \lbc(MV_{b'}).$$
But $\lbc(P_{b'}) < \cc$, so by induction $\lbc(P_{b'})=\lbc(MV_{b'})$, hence $\cc = \lbc(P_{b_\cc})=\lbc(MV_{b_\cc})$.

If $k$ is odd, it follows from axioms (S1) and (S2) and (C1)  that 
$$\lbc(P_{b_{\bf c}})=\lbc(MV_{b_{\bf c}}) \text{ if and only if } \rbc(P_{b'})=\rbc(MV_{b'}).$$
Again $\rbc(P_{b'}) < \ba$ so by induction $\rbc(P_{b'})=\rbc(MV_{b'})$, hence $\cc =\lbc(P_{b_{\bf c}})=\lbc(MV_{b_{\bf c}})$. 

Now assume $\cc_\alpha = 0$ for every real root $\alpha$. Let $\cc'$ be the Lusztig data defined by $\cc'_\delta = \cc_\delta \backslash (\cc_\delta)_1$, $\cc'_{\alpha_0}=\cc'_{\alpha_1}= (\cc_\delta)_1$, and $\cc_\beta=0$ for all other real roots $\beta$.
By (I), $\rbc(P_{b_\cc}) = \cc'$, and by induction $\rbc(MV_{b_\cc}) = \cc'$. Since we already know that \eqref{eq:trap} is the unique MV polytope such that $\rbc(MV_{b_\cc}) = \cc'$, this implies $\lbc(MV_{b_\cc}) = \cc$.

To complete the argument, we must also show that for any element $b_{\bf c}'$ such that $\rbc(P_{b_{\bf c}'})= {\cc}$ we also have $\rbc(MV_{b_{\bf c}'})= {\bf c}$. This proceeds by an identical argument, with the only subtlety being that (I) is not symmetric. However, if $\ba_\alpha = 0$ for every real root $\alpha$, then we proceed by noticing that, by (S1), $\rbc(P_{b_{\bf c}'})=\rbc(MV_{b_{\bf c}'})$ if and only if $\lbc(MV_{\sigma_0 {b_{\bf c}'}}) = \lbc(P_{\sigma_0{b_{\bf c}'}})$, so the previous argument applies.
\end{proof}

The following modification of Theorem \ref{th:unique-aff} will be needed when we consider the pseudo-Weyl polytopes arising from PBW bases. It is immediate from the proof above.

\begin{Proposition} \label{rem:partial-char} 
 Fix a partition $\lambda$. Assume all the conditions of the Theorem \ref{th:unique-aff} hold except that (I) is only known to hold for all partitions $\mu$ with $|\mu|< |\lambda|$ or $\mu=\lambda$.  Then the $ \lbc(MV_b) = \lbc(P_b)$  for all $b$ with $|\lbc_\delta(P_b)| <\lambda$ or $\lbc_\delta(P_b)= \lambda$. Similarly $ \rbc(MV_b) = \rbc(P_b)$ for all $b$ with $|\rbc_\delta(P_b)| <\lambda$ or $\rbc_\delta(P_b) =\lambda$. \qed
\end{Proposition}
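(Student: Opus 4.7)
The plan is to inspect the induction from the proof of Theorem \ref{th:unique-aff} and verify that, when restricted to Lusztig data with $\delta$-part in the allowed set $\{\mu : |\mu|<|\lambda|\}\cup\{\lambda\}$, every invocation of axiom (I) is for a partition in this same set. Two observations drive the argument: axiom (I) is used in the proof of Theorem \ref{th:unique-aff} only in Case 2 (purely imaginary Lusztig datum $\cc$), where it is applied specifically to the partition $\cc_\delta$; and the Case 1 reduction preserves the $\delta$-part of the relevant side of the Lusztig data.

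For the second point, I would track $\delta$-parts through the reduction $b' = \f_{k+1}^{\cc_{\alpha_1+k\delta}}\sigma_k\cdots\sigma_1 b_\cc$. Each Saito reflection $\sigma_i$ applied during this reduction transfers the $\delta$-part alternately between $\lbc$ and $\rbc$: axioms (S1) and (S2), together with Definition \ref{def:ref-data} (under which $\cc\circ s_i$ has the same $\delta$-part as $\cc$), give new $\lbc_\delta = $ old $\rbc_\delta$ after $\sigma_0$, and new $\rbc_\delta = $ old $\lbc_\delta$ after $\sigma_1$. Crystal operators $\f_i$ leave both $\lbc_\delta$ and $\rbc_\delta$ untouched by (C1) and (C2). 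Consequently, $\lbc_\delta(P_{b'})=\cc_\delta$ when $k$ is even and $\rbc_\delta(P_{b'})=\cc_\delta$ when $k$ is odd, and the partial-order inequality $\lbc(P_{b'})<\cc$ (or $\rbc(P_{b'})<\cc$) is realized purely by a decrease in the number of nonzero real-root entries.

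Combining these, the restricted induction is self-contained: starting from a Lusztig datum $\cc$ whose $\delta$-part lies in the allowed set, all intermediate data appearing in the Case 1 reduction have the same $\delta$-part, and the Case 2 terminal step uses (I) for $\cc_\delta$ itself. Hence the weakened hypothesis on (I) in the proposition still suffices, and the conclusion follows by the same induction as in Theorem \ref{th:unique-aff}. The only real book-keeping is the $\delta$-part tracking through Saito reflections, which is immediate from the axioms and poses no genuine obstacle.
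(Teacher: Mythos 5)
Your proposal is correct and takes essentially the same route as the paper: the paper simply asserts that the proposition is immediate from the proof of Theorem \ref{th:unique-aff}, and your careful tracking of $\delta$-parts through the Case 1 reduction (via (S1), (S2), (C1), (C2), and Definition \ref{def:ref-data}) together with the observation that (I) is invoked in Case 2 only for $\cc_\delta$ itself, and the resulting inductive call strictly decreases $|\cc_\delta|$, is exactly the verification the authors had in mind.
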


\section{Rank-2 affine MV polytopes from PBW bases} \label{sec:pbw}

\subsection{Definition and basic properties of rank-2 affine PBW bases}
For $\asl_2$ and $\ATT$, we consider the following two PBW bases coming from the work of Beck-Chari-Pressley, Akasaka, and Beck-Nakajima \cite{BCP,Aka,BN}. 
Following Beck-Nakajima, for each Lusztig datum $\cc$ define two elements of $\U^+$ by
\begin{align}
{\label{LCZero}} &
L(\cc,0) := E_1^{(\cc_{\alpha_1})} T_1^{-1}(E_0)^{(\cc_{s_1(\alpha_0)})} \cdots S_{\cc_\delta} \cdots  T_0(E_1)^{(\cc_{s_0(\alpha_1)})} E_0^{(\cc_{\alpha_0})}, \\
{\label{LCOne}} &
L(\cc,1) := E_0^{(\cc_{\alpha_0})} T_0^{-1}(E_1)^{(\cc_{s_0(\alpha_1)})} \cdots T_0^{-1}(S_{\cc_\delta}) \cdots T_1(E_0)^{(\cc_{s_1(\alpha_0)})} E_1^{(\cc_{\alpha_1})}.
\end{align} 
All the notation here except $S_{\cc_\delta}$ is defined in \S\ref{ss:qg}. We do not need to define $S_\lambda$ exactly; it will be enough to recall that it is a polynomial in the commuting variables $\tpsi_k$ for $k \geq 1$, and that for all $\lambda, \mu$, $S_\lambda S_\mu = \sum_\nu a_{\lambda, \mu}^\nu S_\nu$ where $a_{\lambda, \mu}^\nu$ are the Littlewood-Richardson coefficients. For $\asl_2$, the vectors $\tpsi_k$ (which is denoted $\tpsi_{k,1}$ in \cite{BCP}) are defined by 
\begin{equation} \label{eq:aslpsi}
\tpsi_k:= E_{k \delta - \alpha_1} E_{\alpha_1} - q^{-2} E_{\alpha_1} E_{k \delta - \alpha_1},
\end{equation}
where by definition $E_{k\delta - \alpha_1} = T_0 T_1  \cdots T_{k-2}(E_{k-1})$ (here subscripts are taken modulo $2$). For $\ATT$, $\tpsi_k$ are defined in \cite[Definition 3.3]{Aka} by
\begin{equation} \label{eq:attpsi}
\tpsi_k:= E_{\delta - \alpha_1} E_{(k-1)\delta + \alpha_1} - q^{-1} E_{(k-1)\delta +\alpha_1} E_{\delta - \alpha_1},
\end{equation}
where by definition $E_{\delta - \alpha_1} = T_0(E_1)$ and $E_{(k-1)\delta+\alpha_1}= (T_1^{-1} T_0^{-1})^{k-1}(E_1)$.

\begin{Remark}
Our notation regarding Lusztig data is slightly different from that of Beck-Nakajima. For them, $\cc$ is a sequence of non-negative integers indexed by the integers, and what we call $\cc_\delta$ is denoted $\cc_0$. We have translated their results into our notation.
\end{Remark}

It is well known that $\cB$ is a crystal basis of $\cL/q^{-1}\cL$, and it is shown in \cite{BCP,Aka,BN} that $\{ L(\cc,i) + q^{-1} \cL : \cc \text{ is a Lusztig datum} \} $ is also a crystal basis of $\cL/q^{-1} \cL$. By the uniqueness of crystal basis (see e.g. \cite[Theorem 8.1]{Kashiwara:1995}), we see that:

\begin{Theorem}{\label{PBWParameterizationOfCanonicalBasis}}
For each $i$, $\{ L(\cc,i) + q^{-1} \cL : \cc \text{ is a Lusztig datum } \} = \cB+ q^{-1} \cL .$ \qed
\end{Theorem}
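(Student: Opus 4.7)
The plan is to deduce the theorem from the uniqueness of crystal bases, citing the detailed work on affine PBW bases from \cite{BCP,Aka,BN}. The strategy has three ingredients: (i) both $\cB$ and the proposed PBW family span the same $\cA$-lattice $\cL$ inside $\mathbf{U}^+$; (ii) both descend to bases of $\cL/q^{-1}\cL$ which are compatible with Kashiwara's crystal operators $\e_i,\f_i$; and (iii) a crystal basis is determined up to sign by its crystal structure, so the two projections must coincide.

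First I would recall that $\cB$ is by construction a crystal basis of $\mathbf{U}^+$ in the sense of Kashiwara, so its image in $\cL/q^{-1}\cL$ is the standard crystal basis. Then I would extract from \cite{BCP,Aka,BN} the statements that
\[
\{L(\cc,i):\cc\text{ a Lusztig datum}\}
\]
is an $\cA$-basis of $\cL$, and that under the projection $\cL\twoheadrightarrow\cL/q^{-1}\cL$ it yields a basis on which the operators $\e_j,\f_j$ act (up to multiples in $q^{-1}\cL$) in the standard combinatorial way on Lusztig data. This is exactly the assertion that each $\{L(\cc,i)+q^{-1}\cL\}$ defines a crystal basis of $\cL/q^{-1}\cL$ isomorphic to $B(-\infty)$. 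The technical heart of this step is the identification of the $\tpsi_k$ (via \eqref{eq:aslpsi} and \eqref{eq:attpsi}) with root vectors along imaginary directions and the verification that the Schur-like polynomials $S_{\cc_\delta}$ reproduce the imaginary-root part of the crystal combinatorics; this is the content of the relevant sections of \cite{BCP,Aka,BN} and I would simply invoke those results rather than re-derive them.

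Finally, I would invoke the uniqueness of crystal bases (for example in the form of \cite[Theorem 8.1]{Kashiwara:1995}): two crystal bases of the same lattice $\cL$ that generate isomorphic abstract crystals must agree as subsets of $\cL/q^{-1}\cL$ up to sign. Since $B(-\infty)$ has no non-trivial crystal automorphisms compatible with the grading, the two bases coincide, giving
\[
\{L(\cc,i)+q^{-1}\cL:\cc\text{ a Lusztig datum}\}=\cB+q^{-1}\cL.
\]

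The only genuine obstacle is verifying that the PBW elements form a crystal basis rather than merely a basis; this is subtle precisely because of the imaginary root contribution $S_{\cc_\delta}$, which is where the three papers \cite{BCP,Aka,BN} do the bulk of their work. Once that is taken as given, the theorem follows by a one-line appeal to uniqueness.
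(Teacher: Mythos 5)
Your proposal is correct and follows essentially the same approach as the paper: both establish that $\cB$ and the PBW family $\{L(\cc,i)+q^{-1}\cL\}$ are crystal bases of $\cL/q^{-1}\cL$ (citing \cite{BCP,Aka,BN} for the latter) and then conclude by the uniqueness of crystal bases via \cite[Theorem 8.1]{Kashiwara:1995}. Your extra remarks about the imaginary-root contribution $S_{\cc_\delta}$ and the sign-ambiguity in uniqueness are sensible elaborations but do not change the argument.
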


\noindent Thus for each $i$ the PBW basis elements index $\cB$. Using this indexing some of the crystal operators are given by explicit formulas:

\begin{Proposition}{\cite[Formula 5.3]{BN}}{\label{ExplicitPBWCrystalFormula}}

\begin{enumerate}
  \item $\e_1 L(\cc,0) = L(\e_1 \cc, 0)$
  \item $\e^*_0 L(\cc,0) = L(\e_0 \cc, 0)$
  \item $\e_0 L( \cc, 1) = L( \e_0 \cc, 1)$
  \item $\e^*_1 L( \cc, 1) = L( \e_1 \cc, 1)$
\end{enumerate}
Here $\e_i \cc$ is the Lusztig datum with $(\e_i \cc)_ {\alpha_i} = \cc_ {\alpha_i} +1$ and otherwise agreeing with $\cc$. \qed
\end{Proposition}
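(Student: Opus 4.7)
The plan is to leverage Theorem \ref{PBWParameterizationOfCanonicalBasis}: since the PBW monomials descend to a crystal basis of $\cL / q^{-1}\cL$, the Kashiwara operators $\e_i$ must permute the PBW basis elements (modulo $q^{-1}\cL$) according to some bijection of Lusztig data, and our task is to identify that bijection explicitly in four special cases. The guiding principle throughout is the standard recipe for Kashiwara operators: if $u \in \U^+$ lies in $\ker e_i'$ modulo $q^{-1}\cL$ (where $e_i'$ is Kashiwara's skew derivation with $e_i'(E_j) = \delta_{ij}$), then $\e_i(E_i^{(n)} u) \equiv E_i^{(n+1)} u \pmod{q^{-1}\cL}$.

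For (1), write $L(\cc, 0) = E_1^{(n)} u$ where $n = \cc_{\alpha_1}$ and $u = T_1^{-1}(E_0)^{(\cc_{s_1(\alpha_0)})} \cdots T_0(E_1)^{(\cc_{s_0(\alpha_1)})} E_0^{(\cc_{\alpha_0})}$ is the remainder of the PBW monomial. The crux is the claim that $u \in \ker e_1'$ modulo $q^{-1}\cL$; granting this, the recipe immediately yields $\e_1 L(\cc,0) \equiv E_1^{(n+1)} u = L(\e_1\cc, 0)$. Statement (3) follows by the symmetric argument, swapping the roles of $0$ and $1$ and working with $L(\cc,1)$.

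For the starred statements (2) and (4), use Kashiwara's involution. Since $*$ is an anti-involution fixing the $E_i$ and satisfies $T_i \circ * = * \circ T_i^{-1}$, the element $*L(\cc,0)$ is a PBW-type monomial whose \emph{leftmost} factor is $E_0^{(\cc_{\alpha_0})}$ (the formerly rightmost factor of $L(\cc,0)$). Applying the argument of (1) with $i=0$ to $*L(\cc,0)$ bumps this exponent up by one, and then conjugating by $*$ again recovers $L(\e_0\cc, 0)$. Statement (4) is parallel.

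The main obstacle is the kernel claim: that the tail $u$ satisfies $e_1' u \equiv 0 \pmod{q^{-1}\cL}$. Equivalently, the element of $B(-\infty)$ indexed by $u$ must have $\varepsilon_1^*$ equal to zero. This should follow from the Beck--Chari--Pressley/Akasaka/Beck--Nakajima construction, where the root vectors $T_1^{-1}(E_0), T_0(E_1)$, etc., and the imaginary generators $S_{\cc_\delta}$ are precisely arranged so that the leftmost $E_1^{(\cc_{\alpha_1})}$ factor carries all the $\e_1$-weight --- this is exactly the content of Saito's compatibility between the braid group action and the crystal structure (see \cite{Saito:1994}), applied to the specific ordering of real roots dictated by the reduced expression underlying the PBW basis. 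Once this compatibility is packaged correctly, the four formulas drop out uniformly.
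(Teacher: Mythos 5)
The paper itself does not prove this statement --- it is quoted directly from Beck--Nakajima \cite[Formula 5.3]{BN}, as the \qed with no body indicates --- so there is no internal proof to compare against. Your sketch does outline the standard argument for such PBW crystal formulas, and the overall strategy (strip the $E_1^{(n)}$ factor, invoke the Kashiwara recipe, then use $*$ for the starred cases) is the right one. Your handling of (2) and (4) via $*$ is correct and is exactly what the paper's later Proposition~\ref{prop:tL} records ($L(\cc,0)^* = L(\cc,1)$), so (2) is literally (3) conjugated by $*$.

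However, there is a gap in the crux step, and a misattribution. The kernel claim should be the \emph{exact} statement that the tail $u$ lies in $\ker e_1'$ (not merely modulo $q^{-1}\cL$), and this is not a consequence of Saito's reflection compatibility (the paper's Theorem~\ref{SaitoReflectionThm}); it is the other way around --- Saito's theorem is downstream of the more basic Lusztig decomposition $\U^+ = \bigoplus_n E_i^{(n)}\bigl(T_i^{-1}(\U^+)\cap\U^+\bigr)$, where the second tensor factor is precisely the kernel of the relevant skew derivation. Granting that, one must check that every factor of the tail lies in $T_1^{-1}(\U^+)\cap\U^+$. For the real-root vectors $T_1^{-1}(E_0)$, $T_1^{-1}T_0^{-1}(E_1)$, and also the right-hand factors $T_0(E_1)$, $E_0$, this is an easy direct check. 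But for the imaginary factor $S_{\cc_\delta}$ it requires a genuinely separate argument: one needs $T_1 S_\lambda \in \U^+$, which is exactly the content of the paper's Lemma~\ref{lem:*psi} ($T_1 S_\lambda = S_\lambda^*$, which is in $\U^+$ since $*$ preserves $\U^+$). This is the affine-specific input that your sketch glosses over by saying the generators ``are precisely arranged so that the leftmost factor carries all the $\e_1$-weight.'' Were you to fill in that single step, the argument would be complete.
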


Beck-Nakajima actually define a whole family of PBW bases $\{ L(\cdot,i)\}$, one for each $i \in \Z$. For $i \geq 1$ the basis vectors are given by 
$$
L(\cc,i) := E_{i-1}^{(\cc_{\alpha_{i-1}} )} T_{i-1}^{-1}(E_{i-2})^{(\cc_{ s_{i-1} ( \alpha_{i-2} ) } )} \cdots T_{i-1}^{-1} \cdots 
T_0^{-1}(S_{\cc_\delta}) \cdots T_{i}(E_{i+1})^{(\cc_{ s_{i}( \alpha_{i+1})})} E_{i}^{(\cc_{\alpha_{i}})},
$$
where all subscripts are taken modulo 2.
There is similar formula for $i \leq 0$. However, in the rank-2 cases we are considering, these various bases all coincide with either the case $i=0$ or $i=1$ (see Proposition \ref{prop:bases-coincide} below).

\begin{Lemma} \label{lem:*psi}
$\tpsi_k^* = T_1 \tpsi_k$. In type $\asl_2$ this is also equal to $\tau  \tpsi_k$. In particular, for any partition $\lambda$, 
$S_\lambda^* = T_1 S_\lambda$ and in type $\asl_2$ this is also equal to $\tau S_\lambda$.
\end{Lemma}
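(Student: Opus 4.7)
The statements about $S_\lambda$ are formal consequences of the statements about $\tpsi_k$, together with the fact (stated in Section~\ref{ss:qg}) that the $\tpsi_k$ commute pairwise. Indeed, writing $S_\lambda = P(\tpsi_1, \tpsi_2, \ldots)$ for a polynomial $P$, we have $T_1 S_\lambda = P(T_1\tpsi_1, T_1\tpsi_2, \ldots)$ since $T_1$ is an algebra homomorphism, and similarly for $\tau$. The identity $(S_\lambda)^* = P(\tpsi_1^*, \tpsi_2^*, \ldots)$ also holds: although $*$ reverses the order of products, the images $\tpsi_k^*$ commute among themselves (the image of commuting elements under any algebra anti-automorphism commutes), so the monomials of $P$ can be reordered after applying $*$. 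Hence it suffices to prove $\tpsi_k^* = T_1\tpsi_k$, and (in type $\asl_2$) $\tpsi_k^* = \tau \tpsi_k$.

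The plan is to compute both sides of these identities directly. Applying $*$ to the defining formula for $\tpsi_k$, using $E_i^* = E_i$, the anti-multiplicativity $(AB)^* = B^*A^*$, and the relation $*\circ T_i = T_i^{-1}\circ *$ iterated on the braid expressions for the real root vectors appearing in $\tpsi_k$, rewrites $\tpsi_k^*$ as the expression obtained by swapping the two factors in each term and replacing every $T_j$ by $T_j^{-1}$. Applying $T_1$ termwise to $\tpsi_k$, using that $T_1$ is an algebra automorphism, produces an expression in which each braid factor acquires a leading $T_1$.

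Matching the two sides is the crux. In type $\ATT$ the case $k=2$ is immediate by cancelling $T_1 T_1^{-1}$ in the relevant factor, but for larger $k$ (and for all $k$ in type $\asl_2$) the matching requires an identity in $\U$ that is not visible from the braid expressions alone. The central obstacle in type $\asl_2$ is that $T_1(E_{\alpha_1}) \notin \U^+$, so $T_1\tpsi_k$ lives a priori in the full $\U$; after expanding via Lusztig's formula for $T_1(E_{\alpha_1})$ and commuting the resulting Cartan factor through, $T_1\tpsi_k$ becomes a commutator of $F_1$ with $T_1(E_{k\delta - \alpha_1})$ multiplied by a Cartan element, and the equality with $\tpsi_k^*$ reduces to a quantum commutation identity. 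This identity can be established by induction on $k$: the base case $k=1$ is a direct check using Lusztig's formula for $T_1(E_0)$ and the quantum Serre relation, and the inductive step uses the recursive braid definition of $E_{k\delta - \alpha_1}$. Alternatively, one can appeal to the analysis of imaginary root vectors in Drinfeld's new presentation (cf.\ \cite{BCP, Aka}), in which $\tpsi_k$ admits a characterization that makes its behavior under $*$ and $T_1$ manifest.

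The identity $\tpsi_k^* = \tau \tpsi_k$ in type $\asl_2$ is handled in parallel: since $\tau$ swaps $E_0 \leftrightarrow E_1$ and $T_0 \leftrightarrow T_1$, the expression $\tau \tpsi_k$ is structurally identical to $\tpsi_k$ with the indices $0$ and $1$ interchanged, and the matching with $\tpsi_k^*$ reduces to the same family of commutation identities.
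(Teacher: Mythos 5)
Your preliminary observation that the $S_\lambda$ statements follow formally from the $\tpsi_k$ statements is correct and well-argued; in particular the point that $*$ anti-multiplicativity is harmless because the images $\tpsi_k^*$ still commute is exactly the right justification, and the paper leaves this implicit.

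However, the core of your argument has a genuine gap, and in fact it takes a different (and incomplete) route from the paper. You propose to prove $\tpsi_k^* = T_1\tpsi_k$ by direct expansion and then an induction on $k$, but you only verify $k=2$ for $\ATT$ and $k=1$ for $\asl_2$ in passing, and you explicitly acknowledge that the general step ``requires an identity in $\U$ that is not visible from the braid expressions alone'' without supplying it. This is the entire content of the lemma; asserting that it ``can be established by induction'' and gesturing at Drinfeld's new presentation is not a proof. For $\ATT$ with $k>2$, matching $\tpsi_k^*$ against $T_1\tpsi_k$ requires showing $(T_0T_1)^{k-1}(E_1)=T_1(T_1^{-1}T_0^{-1})^{k-1}(E_1)$, and since the braid group here is free this is a nontrivial statement about vectors, not operators; you give no argument.

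The paper avoids all of this. For $\ATT$ it simply cites \cite[Proposition 3.26(ii)]{Aka}. For $\asl_2$ it cites \cite[Proposition 2]{Beck} for $T_1(\tpsi_k)=\tau\tpsi_k$, which reduces the problem to showing $\tpsi_k^*=\tau\tpsi_k$; it then proves that via a short, slick argument you have not found: by \cite[Proposition 1.2]{BCP}, $\tpsi_{l+m}$ admits a whole family of expressions
\[
\tpsi_{l+m} = E_{l\delta-\alpha_1}E_{m\delta+\alpha_1} - q^{-2}E_{m\delta+\alpha_1}E_{l\delta-\alpha_1}, \qquad l>0,\ m\geq 0,
\]
and applying $*$ (using $T_i\circ * = *\circ T_i^{-1}$, so $E_{l\delta-\alpha_1}^* = \tau E_{(l-1)\delta+\alpha_1}$ and $E_{m\delta+\alpha_1}^* = \tau E_{(m+1)\delta-\alpha_1}$) lands you on the $(m+1,l-1)$ member of the same family, twisted by $\tau$. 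No quantum commutation identities or inductive bookkeeping are needed. The key idea you are missing is precisely this multi-parameter flexibility in the presentation of $\tpsi_k$, which converts the effect of $*$ into a mere relabeling. I would recommend reworking your proof around that observation, or at minimum citing \cite{Aka} and \cite{Beck} for the hard inputs as the paper does rather than promising an unverified induction.
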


\begin{proof}
For type $A_2^{(2)}$ this is \cite[Proposition 3.26 (ii)]{Aka}. For $\asl_2$, the statement $T_1(\tpsi_k) =\tau \tpsi_k$ comes from \cite[Proposition 2]{Beck}. To finish it suffices to show that, in type $\asl_2$, $\tpsi_k^* = \tau \tpsi_k$. 

Since $*$ is an algebra anti-automorphism one can easily calculate from \eqref{eq:aslpsi} that $\psi_{1,1}^* = \tau \psi_{1,1}$. For $k>0$, \cite[Proposition 1.2]{BCP} gives the following:
\begin{align}
\label{eq:*t} \tpsi_{l+m} = E_{l\delta - \alpha_1}E_{m\delta + \alpha_1} - q^{-2}E_{m\delta + \alpha_1}E_{l\delta - \alpha_1} \text{ for } l >0, m \geq 0.
\end{align}
Here $E_{l\delta - \alpha_1} = T_0 T_1  \cdots T_{l-2}(E_{l-1})$ and $E_{m\delta + \alpha_1} = T_1^{-1} T_2^{-1} \cdots T_m^{-1}(E_{m+1})$.

Using the relation $T_i \circ * = * \circ T^{-1}_i$, it follows that $E_{l\delta - \alpha_1}^* = \tau E_{(l-1)\delta + \alpha_1}$ and $E_{m\delta+\alpha_1}^* = \tau E_{(m+1)\delta-\alpha_1}$. Replacing the pair $(l,m)$ by $(m+1,l-1)$ in \eqref{eq:*t} gives $\tpsi_k^* = \tau \tpsi_k$.
\end{proof}

\begin{Lemma}{\label{BraidOpImagLemma}}
For all $k$, $T_0T_1(\tpsi_k) =  \tpsi_k$. In particular, for any $\lambda$, 
$T_0T_1(S_{\lambda}) = S_{\lambda}$. 
\end{Lemma}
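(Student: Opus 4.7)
The strategy is to reduce the braid identity to Lemma \ref{lem:*psi}, which gives $T_1 \tpsi_k = \tpsi_k^*$; equivalently, the goal $T_0 T_1 \tpsi_k = \tpsi_k$ becomes $T_0(\tpsi_k^*) = \tpsi_k$. In the $\asl_2$ case Lemma \ref{lem:*psi} additionally provides $\tpsi_k^* = \tau \tpsi_k$, and since $\tau$ is induced by the Dynkin diagram swap of $0$ and $1$ it intertwines the braid generators as $\tau T_1 = T_0 \tau$ (equivalently, $\tau T_i \tau^{-1} = T_{1-i}$). Combining,
\[
T_0(\tpsi_k^*) = T_0(\tau \tpsi_k) = \tau(T_1 \tpsi_k) = \tau(\tpsi_k^*) = \tau(\tau \tpsi_k) = \tpsi_k,
\]
which settles the $\asl_2$ case in one short chain.

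For $\ATT$ no Dynkin automorphism is available, so I plan to compute $T_0 T_1 \tpsi_k$ directly from \eqref{eq:attpsi}. Using the identity $T_0 T_1 \cdot (T_1^{-1}T_0^{-1})^{k-1} = (T_1^{-1}T_0^{-1})^{k-2}$ in the free braid group, one finds that $T_0 T_1 \tpsi_k$ is given by the same commutator expression as in \eqref{eq:attpsi}, but with $E_{\delta - \alpha_1} = T_0(E_1)$ replaced by $T_0 T_1 T_0(E_1)$ and with $E_{(k-1)\delta + \alpha_1}$ replaced by $E_{(k-2)\delta + \alpha_1}$. To identify this shifted expression with $\tpsi_k$, I would invoke the $\ATT$-analog (due to Akasaka \cite{Aka}) of \cite[Prop.~1.2]{BCP}, which shows that $\tpsi_k$ admits the presentation
\[
E_{m\delta - \alpha_1}\, E_{(k-m)\delta + \alpha_1} - q^{-1} E_{(k-m)\delta + \alpha_1}\, E_{m\delta - \alpha_1}
\]
independently of $m$ in the valid range, so in particular the $m=1$ and $m=2$ presentations agree.

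The concluding assertion $T_0 T_1 (S_\lambda) = S_\lambda$ is then immediate: $S_\lambda$ is a polynomial in the $\tpsi_k$'s, and $T_0 T_1$ is an algebra automorphism fixing each generator. I expect the main obstacle to be the $\ATT$ case: pinning down the precise $m$-independence statement in Akasaka's conventions, and verifying that the index shift produced by $T_0 T_1$ matches the shift in those conventions (keeping in mind the paper's choice of $\alpha_0$ as the long root, which is opposite to \cite{Kac90}). The $\asl_2$ argument, by contrast, is essentially a one-line consequence of the Dynkin symmetry already packaged into Lemma \ref{lem:*psi}.
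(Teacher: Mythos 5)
Your $\asl_2$ argument is correct and is essentially the paper's argument: the chain
\[
T_0(\tpsi_k^*) = T_0(\tau \tpsi_k) = \tau(T_1 \tpsi_k) = \tau(\tpsi_k^*) = \tau(\tau \tpsi_k) = \tpsi_k
\]
is exactly the conjugation-by-$\tau$ trick that the paper packages (rather tersely) into the formula $T_0T_1(\tpsi_k)=\tau T_0 \tau T_1 (\tpsi_k)$, using Lemma \ref{lem:*psi} twice together with $\tau T_1 = T_0\tau$ and $\tau^2=\mathrm{id}$. The deduction for $S_\lambda$ is likewise fine and matches the paper's one-line conclusion.

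For $A_2^{(2)}$ you and the paper part ways, and your route has a genuine gap. The paper simply cites \cite[Proposition 3.26 (i)]{Aka}, which states the $T_0T_1$-invariance of $\tpsi_k$ directly (it is the companion of the item (ii) of that same proposition, already invoked in the proof of Lemma \ref{lem:*psi}). You instead try to push $T_0T_1$ through the defining formula \eqref{eq:attpsi} and then invoke an ``$m$-independent'' presentation of $\tpsi_k$. Two problems. First, the $m$-independence statement is exactly the nontrivial input here; you would have to locate and match it to Akasaka's conventions, which is effectively the same work as citing Proposition 3.26(i), and you acknowledge you have not done this. Second, the term-by-term shift only makes sense for $k\geq 2$: when $k=1$ the second factor in \eqref{eq:attpsi} is $E_{\alpha_1}=E_1$, and $T_0T_1(E_1)\notin \U^+$ since $s_0s_1(\alpha_1)$ is a negative root, so your intermediate expression $E_{(k-2)\delta+\alpha_1}$ does not exist and the manipulation leaves $\U^+$. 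Handling that would require accounting for the cancellations that bring the full commutator back into $\U^+$ --- precisely the non-obvious content that makes these invariance statements worth proving --- so the $k=1$ case needs its own argument (or one should just cite Akasaka, as the paper does). In short: $\asl_2$ is complete and matches the paper; $A_2^{(2)}$ is an honest sketch with acknowledged and unacknowledged holes, which the paper sidesteps by citation.
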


\begin{proof}
It type $\asl_2$, this follows from Lemma \ref{lem:*psi} since $T_0T_1(\tpsi_k) =\tau T_0 \tau T_1 (\tpsi_k)$. 
For $A_2^{(2)}$ this is \cite[Proposition 3.26 (i)]{Aka}.
\end{proof}

\begin{Proposition}{\label{prop:bases-coincide}} Let $i,j \in \bz$ be congruent modulo $2$, and let $\cc$ be any Lusztig datum. We then have $L(\cc,i) = L(\cc,j)$.
\end{Proposition}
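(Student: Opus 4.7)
The plan is to prove the stronger identity $L(\cc,i) = L(\cc,i+2)$ for every $i \in \bz$; iterating this across any even gap then yields $L(\cc,i) = L(\cc,j)$ whenever $i \equiv j \pmod 2$. I would organize the argument by comparing the real-root factors and the central imaginary factor of the two PBW products separately.

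\smallskip

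\emph{Real-root factors.} Since the index set for the Chevalley generators is $\{0,1\}$, every symbol $E_k$ and $T_k$ appearing in Beck--Nakajima's formula depends only on $k \bmod 2$. I would check that the sequence of operators $T_{i-1}^{-1} T_{i-2}^{-1} \cdots T_{i-k+1}^{-1}(E_{i-k})$ indexed by $k \geq 1$ appearing to the left of the central piece of $L(\cc,i)$ agrees pointwise with the corresponding sequence for $L(\cc,i+2)$, and likewise for the factors to the right of the central piece. The Lusztig-datum exponents then match as well, since in both formulas they are indexed by the same positive real root (whose formation depends only on the mod-$2$ class of the Weyl group subscripts).

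\smallskip

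\emph{Central factor.} Set $X_i := T_{i-1}^{-1} T_{i-2}^{-1} \cdots T_0^{-1}(S_{\cc_\delta})$ for $i \geq 1$ and $X_0 := S_{\cc_\delta}$, so that $X_{i+2} = T_{i+1}^{-1} T_i^{-1}(X_i)$. Modulo $2$, the prefactor $T_{i+1}^{-1} T_i^{-1}$ equals $(T_0 T_1)^{-1}$ when $i$ is even and $(T_1 T_0)^{-1}$ when $i$ is odd. Lemma~\ref{BraidOpImagLemma} gives $T_0 T_1(S_{\cc_\delta}) = S_{\cc_\delta}$, equivalently $T_1^{-1} T_0^{-1}(S_{\cc_\delta}) = S_{\cc_\delta}$. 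A short induction on $i \geq 0$ then yields $X_i = S_{\cc_\delta}$ when $i$ is even and $X_i = T_0^{-1}(S_{\cc_\delta})$ when $i$ is odd, so in particular $X_{i+2} = X_i$ in both parities. For $i \leq 0$ the analogous Beck--Nakajima formula produces a central factor built from positive powers of the braid generators, and the same reasoning --- using the identity $T_0 T_1(S_{\cc_\delta}) = S_{\cc_\delta}$ directly, rather than its inverse --- completes that case.

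\smallskip

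The only potential obstacle is index bookkeeping: one must carefully match the nested products of braid operators together with the Weyl group elements indexing the exponents, and verify that they coincide modulo $2$. Once this identification is made and Lemma~\ref{BraidOpImagLemma} is applied to collapse the excess braid action on $S_{\cc_\delta}$, the equality of the two PBW monomials is transparent.
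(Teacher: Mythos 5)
Your proof is correct and follows essentially the same route as the paper's: both reduce to comparing the central imaginary factors $T_{i-1}^{-1}\cdots T_0^{-1}(S_{\cc_\delta})$, observe that the real-root factors automatically coincide since all subscripts are read modulo $2$, and invoke Lemma~\ref{BraidOpImagLemma} to collapse the excess braid action. Your version is somewhat more explicit in tracking the parity of $X_i$ and in addressing the $i \leq 0$ case, but the key idea is identical.
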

\begin{proof}
Suppose both $i,j \geq 1$. By the definition of $L(\cc,i)$, it suffices to prove that  $$T_{i-1}^{-1} \cdots T_1^{-1}T_0^{-1}(S_{\cc_\delta}) = T_{j-1}^{-1} \cdots T_1^{-1}T_0^{-1}(S_{\cc_\delta}).$$ But this follows immediately from Lemma \ref{BraidOpImagLemma}. The other cases are similar.
\end{proof}

\begin{Proposition} \label{prop:tL} \label{PBWStarProposition} Fix $i=0$ or $1$ and let $\cc$ be a Lusztig datum. Then $L(\cc,i)^* = L(\cc,i-1)$, and $\tau L(\cc,i) = L(\cc \circ \tau, i-1)$. Furthermore if $\cc_{\alpha_{i}} = 0$ then $L(\cc \circ s_i,i) = T_i L(\cc,i-1)$.
\end{Proposition}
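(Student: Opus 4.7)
All three claims follow from direct computation on the explicit products defining $L(\cc,i)$, combined with the compatibilities of $*$, $\tau$, and $T_i$ with the imaginary-root factors that were established in Lemmas \ref{lem:*psi} and \ref{BraidOpImagLemma}. The general strategy is to apply the relevant operator termwise and check that each real-root factor transforms as predicted by the reindexing of the Lusztig datum, while the middle factor $S_{\cc_\delta}$ (possibly dressed by braid operators) transforms correctly by virtue of the two lemmas. By Proposition \ref{prop:bases-coincide} it suffices to treat $i\in\{0,1\}$, and by symmetry between the two cases it suffices to verify one representative of each claim.

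For the identity $L(\cc,i)^* = L(\cc,i-1)$, I apply $*$ to $L(\cc,0)$. Since $*$ is an algebra anti-involution fixing the Chevalley generators, the product reverses, and the relation $T_i\circ * = *\circ T_i^{-1}$ turns each $T_{j_1}^{-1}\cdots T_{j_s}^{-1}(E_k)^{(n)}$ into $T_{j_s}\cdots T_{j_1}(E_k)^{(n)}$, thereby exchanging the left and right ``halves'' of the product. The middle factor becomes $S_{\cc_\delta}^*$, which by Lemma \ref{lem:*psi} equals $T_1 S_{\cc_\delta}$ and by Lemma \ref{BraidOpImagLemma} equals $T_0^{-1}S_{\cc_\delta}$; this is precisely the middle factor in $L(\cc,1)$. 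A term-by-term comparison then yields $L(\cc,0)^* = L(\cc,1)$, and applying $*$ once more handles $i=1$.

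For $\tau L(\cc,i) = L(\cc\circ\tau,i-1)$ in $\asl_2$, $\tau$ is an algebra automorphism with $\tau E_i = E_{i+1}$ and $\tau T_i = T_{i+1}\tau$, so applying it termwise to $L(\cc,0)$ leaves the order of the product intact but swaps $E_0\leftrightarrow E_1$ and $T_0\leftrightarrow T_1$ everywhere. Each real-root factor $(E_j)^{(\cc_\beta)}$ with associated root $\beta$ becomes $(E_{j+1})^{(\cc_\beta)}$ with associated root $\tau\beta$, which is exactly the factor at position $\tau\beta$ in $L(\cc\circ\tau,1)$ with exponent $(\cc\circ\tau)_{\tau\beta} = \cc_\beta$. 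The middle factor becomes $\tau S_{\cc_\delta} = T_1 S_{\cc_\delta} = T_0^{-1}S_{\cc_\delta}$ again by Lemmas \ref{lem:*psi} and \ref{BraidOpImagLemma}, matching the middle factor of $L(\cc\circ\tau,1)$.

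For the last claim, take $i=0$ and assume $\cc_{\alpha_0}=0$. Apply $T_0$ to $L(\cc,1)$: the leading factor $E_0^{(\cc_{\alpha_0})}=1$ disappears, and every remaining real-root factor on the left half has the form $T_0^{-1}T_{j_2}^{-1}\cdots(E_k)^{(n)}$, so $T_0$ cancels the outermost $T_0^{-1}$, producing exactly the corresponding factor of $L(\cc\circ s_0,0)$ (the reindexing by $s_0$ of the roots comes for free since the first $T_0^{-1}$ is stripped off). On the right half, each factor $T_1 T_{j_2}\cdots(E_k)^{(n)}$ simply becomes $T_0 T_1 T_{j_2}\cdots(E_k)^{(n)}$, again matching $L(\cc\circ s_0,0)$. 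In the middle, $T_0\cdot T_0^{-1}S_{\cc_\delta} = S_{\cc_\delta}$, matching the middle factor of $L(\cc\circ s_0,0)$. The case $i=1$ is symmetric.

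The main obstacle is bookkeeping: one has to verify that the real-root factors line up correctly after reindexing (using that $s_i$, $\tau$ act on the infinite lists of positive real roots by the expected shifts) and that the two imaginary-root lemmas are enough to handle every appearance of the middle factor. The minor variations between $\asl_2$ and $\ATT$ in the exact form of the root vectors $\tpsi_k$ (and in how roots are labelled by integers) do not affect the argument, since both lemmas are valid in both cases.
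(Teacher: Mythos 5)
Your proof is correct and follows essentially the same approach as the paper's: apply the relevant operator ($*$, $\tau$, or $T_i$) termwise to the explicit product defining $L(\cc,i)$, handle the real-root factors by reindexing, and use Lemmas \ref{lem:*psi} and \ref{BraidOpImagLemma} (together with $T_i \circ * = * \circ T_i^{-1}$) to match up the imaginary-root factor. If anything your write-up is slightly more careful than the paper's, which proves only the $T_i$ identity explicitly and leaves the $*$ and $\tau$ identities as analogues; also note the paper's displayed computation has a small index slip (it needs $\cc_{\alpha_1}=0$, not $\cc_{\alpha_0}=0$, to drop the leading factor of $T_1L(\cc,0)$), which your choice of representative case avoids.
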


\begin{proof}
First assume $\cc_{\alpha_{0}} = 0$.
\begin{equation*}
\begin{aligned}
T_1 L(\cc,0) & = T_1 \left( E_1^{(\cc_{\alpha_1})} T_1^{-1}(E_0)^{(\cc_{s_1(\alpha_0)})} \cdots S_{\cc_\delta} \cdots  T_0(E_1)^{(\cc_{s_0(\alpha_1))}} E_0^{(\cc_{\alpha_0})} \right) \\
& = (E_0)^{(\cc_{s_1(\alpha_0)})} T_0^{-1} (E_1)^{(\cc_{s_1 s_0(\alpha_1)})}  \cdots T_1 S_{\cc_\delta} \cdots  T_1T_0(E_1)^{(\cc_{s_0(\alpha_1))}} T_1E_0^{(\cc_{\alpha_0})}   \\
& = (E_0)^{(\cc_{s_1(\alpha_0)})} T_0^{-1} (E_1)^{(\cc_{s_1 s_0(\alpha_1)})}  \cdots T_0^{-1}T_0 T_1 S_{\cc_\delta} \cdots  T_1T_0(E_1)^{(\cc_{s_0(\alpha_1))}} T_1E_0^{(\cc_{\alpha_0})}   \\
& = (E_0)^{(\cc_{s_1(\alpha_0)})} T_0^{-1} (E_1)^{(\cc_{s_1 s_0(\alpha_1)})}  \cdots T_0^{-1} S_{\cc_\delta} \cdots  T_1T_0(E_1)^{(\cc_{s_0(\alpha_1))}} T_1E_0^{(\cc_{\alpha_0})},
\end{aligned}
\end{equation*}
where the last equality is from Lemma \ref{BraidOpImagLemma}.

The statement that if $\cc_{\alpha_{0}} = 0$ then $L(\cc \circ s_i,1) = T_0 L(\cc,0)$ follows by a similar (and slightly shorter) argument. The statements that $L(\cc,i)^* = L(\cc,i-1)$ and $\tau L(\cc,i) = L(\cc \circ \tau, i-1)$ follow by the same logic, but using Lemma \ref{lem:*psi} in place of Lemma \ref{BraidOpImagLemma}, and making use of \cite[Corollary 1.3.3]{Saito:1994} which says that $T_i \circ * = * \circ T^{-1}_i$.
\end{proof}

\subsection{Relationship of PBW bases with the canonical basis}

\begin{Definition}{\label{BCIDefinition}}
For each $i$ and Lusztig datum $\cc$, denote by $b(\cc,i)$ the unique element of $\cB$  that coincides with $L(\cc,i)$ in $\cL/q^{-1} \cL$. 
\end{Definition}

\begin{Definition}
The partial order $\prec_0$ on Lusztig data is defined as follows: For any Lusztig datum $\cc$, we form two infinite tuples
\begin{align*}
\cc_{+_0} = (\cc_{\alpha_1}, \cc_{s_1(\alpha_0)}, \cdots ) \text{ and } \cc_{-_0} =( \cc_{\alpha_0}, \cc_{s_0(\alpha_1)}, \cdots ) 
\end{align*}
\noindent We say $\cc \prec_0 \cc^\prime$ if 
\begin{align*}
\cc_{+_0} \leq \cc_{+_0}^\prime \text{ and } \cc_{-_0} \leq \cc_{-_0}^\prime
\end{align*}

\noindent with one of these inequalities strict. Here, $\leq$ is the left-to-right lexicographic order. 

The partial  order $\prec_1$ is defined in the same way by swapping the roles of $0$ and $1$. 
\end{Definition}

\begin{Lemma}{\cite[Lemma 3.30]{BN}}{\label{ReorderingLemma}}
Fix $i$ modulo $2$. Let $\cc$ and $\cc^\prime$ be Lusztig data. Write:

\begin{align*}
L(\cc,i) L(\cc^\prime,i) = \sum_{\cc^{\prime\prime}} a^{\cc^{\prime\prime}}_{\cc,\cc^\prime} L(\cc^{\prime\prime},i)
\end{align*}
Then every Lusztig datum $\cc^{\prime\prime}$ that shows up in the righthand side sum satisfies:
\begin{align*}
\cc^{\prime\prime}_{+_i} \geq \cc_{+_i} \text{ and } \cc^{\prime\prime}_{-_i} \geq \cc^{\prime}_{-_i}.
\end{align*}

\vspace{-0.8cm}

\qed
\end{Lemma}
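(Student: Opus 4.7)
The plan is to prove Lemma \ref{ReorderingLemma} by expanding $L(\cc,i)L(\cc',i)$ as a single word in root vectors and then straightening it back to PBW form via Levendorskii--Soibelman (LS) commutation identities. First I would recall the convex order on positive roots implicit in the basis $L(\cdot,i)$: for $i=0$, read $\alpha_1,\,s_1(\alpha_0),\,s_1s_0(\alpha_1),\ldots$ from left to right, then the imaginary block $k\delta$, then $\ldots,\,s_0s_1(\alpha_0),\,s_0(\alpha_1),\,\alpha_0$. The tuples $\cc_{+_0}$ and $\cc_{-_0}$ record the real-root multiplicities read inward from the two extremes, with $\alpha_1$ and $\alpha_0$ as the respective first entries.

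The central input is the LS formula: for real roots $\alpha<\beta$ in this convex order, $E_\beta E_\alpha-q^{-(\alpha,\beta)}E_\alpha E_\beta$ lies in the span of PBW monomials whose factors $E_\gamma$ satisfy $\alpha<\gamma<\beta$. Together with the commutativity of the $\tpsi_k$ and the real--imaginary straightening identities worked out in \cite{BCP,Aka,BN}, this lets one rewrite $L(\cc,i)L(\cc',i)$ as a linear combination of PBW monomials $L(\cc'',i)$. The key convexity observation is that an LS correction at an inversion $(\alpha,\beta)$ can never introduce a factor at the outermost position $\alpha_1$ or $\alpha_0$, because such a factor would need to lie strictly between $\alpha$ and $\beta$, and $\alpha_1,\alpha_0$ are the extremes of the convex order.

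With this in hand I would track the extremal multiplicities through the straightening. In the literal word $L(\cc,i)L(\cc',i)$, the $E_{\alpha_1}$ factors come in two batches: those at the far left (contributed by $\cc$) and those in the middle (contributed by $\cc'$). Only the latter sit to the right of some $E_\beta$ with $\beta>\alpha_1$ and hence can be consumed by an LS move, so in any expansion one has $\cc^{\prime\prime}_{\alpha_1}\geq\cc_{\alpha_1}$. If this inequality is strict the lex comparison is immediate; otherwise the argument shifts to the position $s_1(\alpha_0)$ (having frozen the $\cc_{\alpha_1}$ leftmost factors), and the same "far-left contribution is protected" reasoning applies. Iterating inward from the left yields $\cc^{\prime\prime}_{+_i}\geq\cc_{+_i}$, and the symmetric argument from the right (where the far-right $E_{\alpha_0}$'s come from $\cc'$ and only the middle $\cc$-contributed copies can be consumed) yields $\cc^{\prime\prime}_{-_i}\geq\cc'_{-_i}$.

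The main obstacle is the passage across the imaginary block $S_{\cc_\delta}$: one must verify that commuting real root vectors past the $\tpsi_k$ only produces real-root factors at positions strictly between the two being swapped, and never contributes to the two extremes. This rests on Lemma \ref{BraidOpImagLemma} together with the defining formulas for $\tpsi_k$ from \cite[Proposition 1.2]{BCP} and \cite[Definition 3.3]{Aka}. Packaging these real--imaginary identities alongside the standard real--real LS relations into a single, carefully ordered induction (on, say, the total number of inversions remaining) is precisely the combinatorial content of \cite[Lemma 3.30]{BN}, which is the source cited for the result.
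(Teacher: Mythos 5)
The paper does not prove this statement: it is quoted verbatim from \cite[Lemma 3.30]{BN} with the \verb|\qed| appearing immediately, so there is no in-paper argument to compare yours against. What you have written is a plausible high-level reconstruction of the argument in Beck--Nakajima, built on the Levendorskii--Soibelman straightening relations and the convexity of the PBW ordering, which is indeed the right machinery. Your key observations --- that LS corrections live strictly between the two roots being swapped, that the extremal roots $\alpha_1$ and $\alpha_0$ therefore cannot be created by corrections, and that factors contributed by the outer monomial at the outermost position cannot participate in an inversion --- are all correct and are the heart of the matter.

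Two places are glossed over in a way that would need tightening before this could stand as a proof rather than a sketch. First, the inductive step ``having frozen the $\cc_{\alpha_1}$ leftmost factors, the same reasoning applies at $s_1(\alpha_0)$'' requires an argument that nothing larger than $s_1(\alpha_0)$ can ever appear at those positions in any term of the straightening; unlike the base case, this is not a pure ``nothing is below the minimum'' observation, because roots between $\alpha_1$ and $s_1(\alpha_0)$ could in principle be produced by corrections elsewhere (they aren't in $\asl_2$ since these two roots are adjacent in the convex order, but the argument should say so, and the general-position version of the step needs care). Second, the imaginary block is exactly as delicate as you flag: the identities of \cite[Prop.\ 1.2]{BCP} and \cite[Def.\ 3.3]{Aka} relating $\tpsi_k$ to pairs of real root vectors have to be invoked to show that swapping a real root vector past $S_{\cc_\delta}$ produces only monomials whose real supports sit strictly interior to the convex order; naming this as ``the main obstacle'' is the right instinct, but the proof you outline defers precisely this step to the cited literature. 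Since the lemma is itself a citation in the paper under review, deferring to \cite{BN} is legitimate, and your sketch is a faithful description of what that proof does.
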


\begin{Theorem}{\cite[Theorem 3.13]{BN}}{\label{UpperTriangularityTheorem}}
The change of basis matrix between $\{ L(\cc,i) : \cc \text{ is a Lusztig datum } \}$ and $\{ b(\cc,i) : \cc \text{ is a Lusztig datum } \}$ is upper triangular with $1$'s on the diagonal with respect to the partial order $\prec_i$ on Lusztig data defined above.\qed
\end{Theorem}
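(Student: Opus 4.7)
The plan is to follow Lusztig's standard strategy for extracting a canonical basis from an ``almost bar-invariant'' PBW-type basis. The key intermediate goal is to show that, for some coefficients $r_{\cc, \cc'} \in \Z[q, q^{-1}]$,
$$\overline{L(\cc, i)} \;=\; L(\cc, i) + \sum_{\cc' \succ_i \cc} r_{\cc, \cc'} \, L(\cc', i),$$
where $\overline{\phantom{x}}$ denotes the bar involution on $\U^+$. Granted this, a formal induction (Lusztig's lemma) produces a unique bar-invariant element $\tilde b(\cc, i)$ in $L(\cc, i) + \sum_{\cc' \succ_i \cc} q^{-1} \Z[q^{-1}] L(\cc', i)$; since $\tilde b(\cc, i) \equiv L(\cc, i) \pmod{q^{-1} \cL}$ and is bar-invariant, uniqueness of the global crystal basis forces $\tilde b(\cc, i) = b(\cc, i)$, giving the desired upper triangularity. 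The inductive construction terminates because there are only finitely many Lusztig data of any given weight, so $\prec_i$ is well-founded on each weight space of $\U^+$.

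To establish the almost bar-invariance, I would decompose $L(\cc, i)$ into three blocks: the real-root factors to the left of $S_{\cc_\delta}$, the imaginary factor $S_{\cc_\delta}$ itself, and the real-root factors to the right. Each real root vector appearing in \eqref{LCZero} and \eqref{LCOne} is a specific $T_w^{\pm 1}(E_j)$, and the divided-power normalization is arranged so that the individual real-root factors are bar-invariant; this is the affine analogue of Lusztig's finite-type computation. For the imaginary piece one shows from \eqref{eq:aslpsi} and \eqref{eq:attpsi} that $\overline{\tpsi_k} - \tpsi_k$ lies in the commutative subalgebra generated by $\tpsi_\ell$ with $\ell < k$, so that $\overline{S_\lambda}$ differs from $S_\lambda$ by a $\Z[q, q^{-1}]$-combination of $S_\mu$ with $|\mu| = |\lambda|$ and $\mu$ strictly lower in the dominance order. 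Multiplying the three blocks together and re-expanding using the reordering Lemma \ref{ReorderingLemma} confirms that only $\cc' \succeq_i \cc$ contribute, with leading coefficient $1$.

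The main obstacle is the analysis of the imaginary factor $S_{\cc_\delta}$: unlike the real-root vectors, the $\tpsi_k$ are not manifestly bar-invariant, and one must verify that the precise definitions \eqref{eq:aslpsi} and \eqref{eq:attpsi} are rigged so that the Schur polynomials $\{S_\lambda\}$ behave compatibly with bar modulo the partial order $\prec_i$. Lemmas \ref{lem:*psi} and \ref{BraidOpImagLemma}, which control the interaction of $\tpsi_k$ with $*$, $\tau$, and $T_0 T_1$, are the analogous control results for the bar involution and should be the central ingredient. Everything else in the argument is bookkeeping: once the correct behavior of the $\tpsi_k$ under bar is in place, the real-root part is handled by standard affine PBW manipulations, and the passage to canonical basis is the formal Lusztig--lemma induction described above.
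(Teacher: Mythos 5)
This statement is quoted from Beck--Nakajima \cite[Theorem 3.13]{BN} and is not proved in the paper (note the \verb|\qed| immediately following the citation), so there is no in-paper proof to compare against; I will assess the sketch on its own merits.

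Your overall strategy --- show that $\overline{L(\cc,i)}$ is a unimodular $\prec_i$-upper-triangular combination of PBW monomials and then invoke Lusztig's lemma together with uniqueness of the crystal lattice --- is the right skeleton and matches Beck--Nakajima's approach. However, two of the claimed ingredients are incorrect as stated. First, the real-root factors $T_{i_1}^{\pm 1}\cdots T_{i_{k-1}}^{\pm 1}(E_{i_k})^{(c)}$ are \emph{not} individually bar-invariant: the bar involution intertwines $T_i$ with a different braid operator (roughly, $\overline{T_i(x)}$ involves $T_i^{-1}$ or a sign-twisted variant applied to $\overline{x}$), so only the simple divided powers $E_j^{(c)}$ are bar-fixed. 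Triangularity of bar against a PBW basis is a nontrivial theorem already in finite type, proved by Levendorskii--Soibelman-type straightening, not by factor-wise bar-invariance; the affine version requires in addition a careful analysis of how real and imaginary root vectors commute past each other, which is the actual content of the Beck--Nakajima computation. Second, you appeal to Lemmas \ref{lem:*psi} and \ref{BraidOpImagLemma} as ``control results for the bar involution,'' but those lemmas concern Kashiwara's involution $*$, the diagram automorphism $\tau$, and the translation $T_0T_1$ --- none of which is the bar involution $\overline{\phantom{x}}$. The bar behavior of the imaginary vectors is governed by separate results (in BCP/BN one shows the $\tilde{P}_k$, hence the $S_\lambda$, are bar-invariant up to a triangular correction, which is not the same statement as the $*$- or $\tau$-equivariance you cite). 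So the sketch is structurally sound but the two load-bearing claims about bar-invariance of the factors are unproved, and in the real-root case false as written; these are precisely the hard parts that Beck--Nakajima's Theorem 3.13 and its supporting lemmas supply.
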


The following is immediate from certain geometric constructions of the canonical basis and $B(-\infty)$, but we give an algebraic proof for completeness. 

\begin{Proposition}{\label{CanonicalBasisCrystalFormula}}
Fix $b \in \cB$. Write $E^{(n)}_i b = \sum_{b^\prime} a_{b,b^\prime} b^\prime$. Then $a_{b,\e^n_i b} \neq 0$.
\end{Proposition}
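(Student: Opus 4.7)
The plan is to work modulo $q^{-1}\cL$ and establish the congruence $E_i^{(n)} b \equiv c \cdot \e_i^n b \pmod{q^{-1}\cL}$ for some nonzero $c \in \mathbb{C}(q)$. Since $\cB$ descends to a $\cA/q^{-1}\cA$-basis of $\cL/q^{-1}\cL$, this congruence forces the coefficient $a_{b, \e_i^n b}$ to be congruent to $c$ modulo $q^{-1}$, and hence nonzero as a rational function.

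If $n > \vareps_i(b)$ both sides vanish, so assume $n \leq \vareps := \vareps_i(b)$. Regarding $\U^+$ as a module over the $U_q(\mathfrak{sl}_2)$ generated by $E_i$ and $F_i$, Kashiwara's compatibility lemma between crystal lattices and the $\mathfrak{sl}_2$-decomposition allows us to write $b = \sum_{k \geq 0} F_i^{(k)} u_k$ with each $u_k \in \cL \cap \ker E_i$. By the crystal basis axioms, one has $u_k \in q^{-1}\cL$ for $k \ne \vareps$, while $u_\vareps \equiv \e_i^\vareps b \pmod{q^{-1}\cL}$.

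Applying $E_i^{(n)}$ and using the standard $U_q(\mathfrak{sl}_2)$-identity $E_i^{(n)} F_i^{(k)} v = \binom{l-k+n}{n}_q F_i^{(k-n)} v$, valid for any $v \in \ker E_i$ of $K_i$-weight $q^l$, the only term surviving modulo $q^{-1}\cL$ comes from $k = \vareps$, yielding
$$
E_i^{(n)} b \equiv \binom{l_\vareps - \vareps + n}{n}_q F_i^{(\vareps - n)} u_\vareps \pmod{q^{-1}\cL},
$$
where $l_\vareps$ is the $K_i$-weight of $u_\vareps$. The quantum binomial is nonzero, since $l_\vareps \geq \vareps$ (which is forced by the nonvanishing of $F_i^{(\vareps)} u_\vareps$ modulo $q^{-1}\cL$), and iterating the definition of $\f_i$ on the crystal basis gives $F_i^{(\vareps - n)} u_\vareps \equiv \e_i^n b \pmod{q^{-1}\cL}$. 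This establishes the required congruence.

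The main obstacle is invoking Kashiwara's splitting lemma in the $q = \infty$ convention used here (it is more commonly stated at $q = 0$); this is a routine translation. A secondary point is to verify that the quantum binomial $\binom{l_\vareps - \vareps + n}{n}_q$ is nonzero in $\mathbb{C}(q)$, which follows from the fact that $F_i^{(\vareps)} u_\vareps$ is the bottom of a nontrivial $\mathfrak{sl}_2$-string.
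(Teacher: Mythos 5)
Your proposal is a genuinely different approach from the paper's: the paper works directly with the PBW basis $\{L(\cc,0)\}$, applies $E_i^{(n)}$ to the exact triangular expansion of $b$ in that basis (where the binomial identity $E_1^{(n)}E_1^{(k)}=\binom{n+k}{n}_q E_1^{(n+k)}$ holds on the nose), and then invokes upper triangularity (Theorem~\ref{UpperTriangularityTheorem}) a second time to see that no cancellation can occur. You instead try to run the argument entirely inside the crystal lattice $\cL$ via a Kashiwara-type $\mathfrak{sl}_2$-decomposition. The spirit is similar, but as written the argument has problems.

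First, a conceptual issue: $\U^+$ is not a $U_q(\mathfrak{sl}_2)$-module under left multiplication by $E_i,F_i$; the element $F_i$ does not even lie in $\U^+$, and $E_i^{(n)}b$ here means multiplication in $\U^+$, not a module action. The correct framework is Kashiwara's $q$-boson algebra, with decomposition $b=\sum_k E_i^{(k)}u_k$, $u_k\in\ker e_i'$, and crystal operators defined by shifting the index $k$. In particular the identity you invoke, $E_i^{(n)}F_i^{(k)}v=\binom{l-k+n}{n}_q F_i^{(k-n)}v$, is the wrong one; the relevant identity is simply $E_i^{(n)}E_i^{(k)}=\binom{n+k}{n}_q E_i^{(n+k)}$, with no dependence on the weight of $u_k$. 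Relatedly, in the paper's conventions for $B(-\infty)$, the relevant extremal index in the Kashiwara decomposition is $p=\varphi_i(b)$, not $\vareps_i(b)$; and neither $E_i^{(n)}b$ nor $\e_i^n b$ ever vanishes, so the opening reduction ``if $n>\vareps_i(b)$ both sides vanish'' is incorrect.

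Second, and more seriously, even after these corrections there is a genuine gap in the key estimate. You have $E_i^{(n)}b=\sum_k \binom{n+k}{n}_q E_i^{(n+k)}u_k$, and you know only that $u_k\in q^{-1}\cL$ for $k\neq p$, while $u_p\not\equiv 0\bmod q^{-1}\cL$. But the quantum binomial $\binom{n+k}{n}_q$ has a pole of order $nk$ at $q=\infty$, so for $k>p$ the factor $\binom{n+k}{n}_q$ grows at least as fast as $\binom{n+p}{n}_q$ does, and the bound $u_k\in q^{-1}\cL$ is not enough to conclude that $\binom{n+k}{n}_q E_i^{(n+k)}u_k$ is negligible modulo (a suitable rescaling of) $\cL$. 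There is no a priori reason that $u_k=0$ for $k>\varphi_i(b)$ --- indeed for generic canonical basis elements these higher terms are nonzero, only small. So the claimed congruence $E_i^{(n)}b\equiv c\cdot\e_i^n b$ does not follow from the stated ingredients. This is exactly the difficulty that the paper's PBW argument sidesteps: the PBW expansion of $b$ is an exact identity in $\U^+$, the operator $E_i^{(n)}$ acts on each PBW vector by the clean binomial formula without introducing any ``small but growing'' error terms, and the ordering from Lemma~\ref{ReorderingLemma} together with Theorem~\ref{UpperTriangularityTheorem} precludes cancellation. To salvage your approach you would need a substantially sharper control on the $u_k$ for $k>\varphi_i(b)$ than what the crystal lattice axioms give.
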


\begin{proof}
For notational clarity we present the proof for $i=1$; the case $i=0$ uses exactly the same argument. There is a unique Lusztig datum $\cc$ such that $b = b(\cc,0)$.  By Theorem \ref{UpperTriangularityTheorem}
\begin{align}
b(\cc,0) = L(\cc,0) + \sum_{\cc^\prime \succ_0 \cc} a_{\cc,\cc^\prime} L(\cc^\prime,0).
\end{align}
Multiplying both sides by $E_1$, 
\begin{align}
E^{(n)}_1b(\cc,0) = E^{(n)}_1L(\cc,0) + \sum_{\cc^\prime \succ_0 \cc} a_{\cc,\cc^\prime} E^{(n)}_1L(\cc^\prime,0).
\end{align}
But we know that up to scaling by a quantum integer $E^{(n)}_1L(\dd,0) = \e^{(n)}_1 L(\dd,0)$. So after scaling, each term on the right hand side lies in the PBW basis. Rewrite each term using the canonical basis. Again using Theorem \ref{UpperTriangularityTheorem} only $E^{(n)}_1L(\cc,0)$ will contribute the the coefficient of $\e^{(n)}_1 b(\cc,0)$. Thus there is no cancellation and the Proposition holds.
\end{proof}

\begin{Theorem}{\cite[Proposition 3.4.7]{Saito:1994} }{\label{SaitoReflectionThm}}
Suppose $b$ is an element of the canonical basis such that $T_i(b) \in \U^+$. Then $T_i(b)$ lies in the crystal lattice, and
\begin{align}
T_i(b) \equiv \sigma_i b \; \text{ in } \cL/q^{-1} \cL.
\end{align}

\vspace{-0.8cm}

\qed
\end{Theorem}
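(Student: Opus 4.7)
The strategy is to reduce to an explicit computation involving the PBW-type decomposition of $\U^+$ with respect to the root $\alpha_i$, together with the standard formulas for the braid action on divided powers. First I would verify that the hypothesis $T_i(b) \in \U^+$ forces the crystal-theoretic condition $\varphi_i^*(b) = 0$, so that $\sigma_i b$ is actually well-defined in the sense of Definition \ref{def:ref}. This follows from a Lusztig-style direct-sum decomposition $\U^+ = \bigoplus_{n \geq 0} (\U^+ \cap T_i(\U^+)) \cdot E_i^n$, together with the identification, modulo $q^{-1}\cL$, of the first factor with the $\cA$-span of $\{b \in \cB : \varphi_i^*(b) = 0\}$.

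Next, assuming $\varphi_i^*(b) = 0$ and setting $m = \varphi_i(b)$, I would expand, modulo $q^{-1}\cL$, the relation $b \equiv F_i^{(m)} b'$ where $b' = \e_i^m b$ sits at the top of the $\e_i$-string through $b$. Applying $T_i$ and using the standard divided-power identity (which in one common normalization reads $T_i(F_i^{(m)}) = (-q)^m K_i^{-m} E_i^{(m)}$), one obtains
\[
T_i(b) \equiv (-q)^m K_i^{-m} E_i^{(m)} \cdot T_i(b') \pmod{q^{-1}\cL}.
\]
The vector $b'$ now satisfies both $\varphi_i(b') = 0$ and $\varphi_i^*(b') = 0$, which reduces the problem to the base case of showing $T_i(b') \equiv b' \pmod{q^{-1}\cL}$. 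For the base case I would induct on weight, using that such $b'$ lies in the subalgebra generated by $E_j$ for $j \neq i$ together with $T_i$-stable ``imaginary'' elements (compare Lemma \ref{BraidOpImagLemma}), and handling each generator by a direct calculation.

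To finish the identification I would invoke Proposition \ref{CanonicalBasisCrystalFormula} and its $*$-analogue: multiplication by $E_i^{(m)}$ on a crystal-lattice element lifts the Kashiwara operator $(\e_i^*)^m$ modulo $q^{-1}\cL$, and the Cartan factor $(-q)^m K_i^{-m}$ reduces to $\pm 1$ on the relevant weight space. This identifies $T_i(b) \pmod{q^{-1}\cL}$ with $(\e_i^*)^{\vareps_i(b)} \f_i^{\varphi_i(b)} b = \sigma_i b$, where the exponent on $\e_i^*$ is determined by the weight balance $\vareps_i(b) = \varphi_i(b) - \langle \alpha_i^\vee, \wt b\rangle$. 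The main obstacle I anticipate is the bookkeeping of $q$-powers: one must show that $T_i(b)$ lands in $\cL$ itself (not merely in $q^{-N}\cL$ for some $N > 0$), and that the $q$-integer coefficients from the divided-power braid identity combine with those from the Cartan action to yield precisely the integer coefficient $\pm 1$ predicted by the crystal combinatorics.
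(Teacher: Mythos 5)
The paper offers no proof of this statement: it is quoted from Saito's paper (Proposition 3.4.7 of \cite{Saito:1994}) with the \verb|\qed| placed directly after the statement. So there is no in-paper argument to compare against; one can only judge whether your sketch would in fact establish Saito's result.

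Your high-level plan — Lusztig's direct-sum decomposition of $\U^+$ with respect to $i$, the braid action on divided powers, and a reduction along the $i$-string — is in the right family of ideas, but the reduction step as written is incorrect in a way that affects the substance, not just the bookkeeping. First, a conventions issue: the paper works with $\U^+$ and $B(-\infty)$, where $\e_i$ \emph{raises} weight and never kills, $\f_i$ can kill, and $\varphi_i(b)=\max\{n:\f_i^n b\neq\emptyset\}$. Your element $b'=\e_i^m b$ therefore has $\varphi_i(b')=\varphi_i(b)+m=2m$, not $0$; and $F_i\notin\U^+$, so the relation $b\equiv F_i^{(m)}b'$ cannot hold in the lattice $\cL\subset\U^+$. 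To descend to the bottom of the $\e_i$-string you need $b''=\f_i^m b$, not $\e_i^m b$.

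Second, and more seriously, even with the corrected $b''=\f_i^m b$ (which does satisfy $\varphi_i(b'')=0$ and, because applying $\f_i$ preserves the condition $\varphi_i^*=0$, also $\varphi_i^*(b'')=0$), the base case you propose — showing $T_i(b'')\equiv b''$ — is false in general. By Definition~\ref{def:ref}, $\sigma_i(b'')=(\e_i^*)^{\varepsilon_i(b'')}b''$, and $\varepsilon_i(b'')=\varphi_i(b'')-\langle\alpha_i^\vee,\wt b''\rangle=-\langle\alpha_i^\vee,\wt b''\rangle$ need not vanish. So the correct base case is $T_i(b'')\equiv\sigma_i(b'')=(\e_i^*)^{\varepsilon_i(b'')}b''$, which is exactly as hard as the statement you started from. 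Your final paragraph does try to reinstall a $(\e_i^*)^{\varepsilon_i(b)}$ factor via a $*$-analogue of Proposition~\ref{CanonicalBasisCrystalFormula}, but this is inconsistent with the earlier assertion that the base case is $T_i(b')\equiv b'$: the exponent $\varepsilon_i(b)$ would have to come out of the base-case computation itself, not be grafted on afterward. Also note that $(\e_i^*)^m$ is lifted by \emph{right} multiplication by $E_i^{(m)}$ (since $*$ is an anti-involution), not left multiplication, which further complicates matching the factor $E_i^{(m)}$ produced by the braid identity. The reduction as stated does not close, and repairing it would require reproducing essentially the full argument of \cite[\S3.4]{Saito:1994} (in particular, his analysis of how $T_i$ interacts with the compatibility of the canonical basis with the subalgebra $\U^+\cap T_i\U^+$), rather than a short base case.
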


Using this formula and our explicit description of braid operators on PBW basis vectors, we have the following corollary to Saito's theorem. A version of this formula is mentioned in \cite[Remark 3.29]{BN}.

\begin{Corollary}{\label{SaitoReflectionFormula}}
If $\cc$ is a Lusztig datum with $\cc_{\alpha_i} = 0$ then $\sigma_i b(\cc,i-1) \in \cL$, and
\begin{align}
b(\cc \circ s_i, i) \equiv \sigma_i b(\cc,i-1) \; \text{ in } \cL/q^{-1} \cL.
\end{align}
\end{Corollary}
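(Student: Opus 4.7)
My plan is to apply Saito's Theorem \ref{SaitoReflectionThm} to the canonical basis element $b(\cc, i-1)$ and identify the resulting element using Proposition \ref{prop:tL} together with Theorem \ref{UpperTriangularityTheorem}.

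First, under the hypothesis $\cc_{\alpha_i} = 0$, I would verify the hypothesis of Saito's theorem, namely $T_i b(\cc, i-1) \in \U^+$. By Theorem \ref{UpperTriangularityTheorem}, expand
\begin{equation*}
b(\cc, i-1) = L(\cc, i-1) + \sum_{\cc' \succ_{i-1} \cc} a_{\cc, \cc'}\, L(\cc', i-1), \qquad a_{\cc, \cc'} \in q^{-1}\cA.
\end{equation*}
Proposition \ref{prop:tL} shows that $T_i L(\cc', i-1) = L(\cc' \circ s_i, i) \in \U^+$ precisely when $\cc'_{\alpha_i} = 0$. The key point is to argue that every $\cc'$ appearing in this expansion satisfies $\cc'_{\alpha_i} = 0$. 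This should follow from compatibility of the PBW-to-canonical change-of-basis matrix with the $\e_i$-crystal structure: by Proposition \ref{ExplicitPBWCrystalFormula}, on the $(i-1)$-th PBW basis the operator $\e_i$ increments the $\alpha_i$-component of the Lusztig datum, so $\varphi_i(b(\cc,i-1)) = \cc_{\alpha_i} = 0$, and a standard argument (using that $\f_i$ acts compatibly on both bases and that canonical basis expansions preserve the $\e_i$-depth) forces the $\alpha_i$-component to be preserved throughout the PBW expansion. This step is the main technical obstacle of the proof.

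Once this is verified, applying $T_i$ termwise via Proposition \ref{prop:tL} gives
\begin{equation*}
T_i b(\cc, i-1) = L(\cc \circ s_i, i) + \sum_{\cc' \succ_{i-1} \cc} a_{\cc, \cc'}\, L(\cc' \circ s_i, i) \in L(\cc \circ s_i, i) + q^{-1}\cL,
\end{equation*}
which lies in $\U^+ \cap \cL$. Saito's Theorem \ref{SaitoReflectionThm} then yields $T_i b(\cc, i-1) \equiv \sigma_i b(\cc, i-1) \pmod{q^{-1}\cL}$ and, in particular, $\sigma_i b(\cc, i-1) \in \cL$. Finally, combining this with the PBW computation above and using Theorem \ref{UpperTriangularityTheorem} again to identify $L(\cc \circ s_i, i) \equiv b(\cc \circ s_i, i) \pmod{q^{-1}\cL}$, we conclude
\begin{equation*}
\sigma_i b(\cc, i-1) \equiv L(\cc \circ s_i, i) \equiv b(\cc \circ s_i, i) \pmod{q^{-1}\cL},
\end{equation*}
which is the required identity.
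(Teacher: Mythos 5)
Your proof follows the same structure as the paper's: expand $b(\cc, i-1)$ in the $(i-1)$-PBW basis via Theorem~\ref{UpperTriangularityTheorem}, show each term is of the form $L(\dd, i-1)$ with $\dd_{\alpha_i} = 0$ so that $T_i$ applies termwise (Proposition~\ref{prop:tL}) and lands in $\U^+$, invoke Theorem~\ref{SaitoReflectionThm}, and identify the result modulo $q^{-1}\cL$. The mechanics of everything after the key step are correct.

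The one place you leave a gap is exactly the step you flag as ``the main technical obstacle'': showing that every $\dd$ appearing in the PBW expansion of $b(\cc, i-1)$ satisfies $\dd_{\alpha_i} = 0$. The paper closes this point simply by citing Theorem~\ref{UpperTriangularityTheorem}: every such $\dd$ is comparable to $\cc$ in the partial order $\prec_{i-1}$, which is built from lexicographic comparison of the two sequences $\cc_{+_{i-1}}$ and $\cc_{-_{i-1}}$, one of which begins with the coordinate $\cc_{\alpha_i}$; since $\cc_{\alpha_i} = 0$, comparability pins down $\dd_{\alpha_i}$. Your alternative route — observing that $\varphi_i(b(\cc, i-1)) = \cc_{\alpha_i} = 0$ and then appealing to an unspecified ``standard argument'' about compatibility of crystal depth with canonical-basis expansions — is not an argument; it is a plausibility claim. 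Crystal operators are only defined on $\cL/q^{-1}\cL$, so $\varphi_i(b) = 0$ does not by itself control the PBW coefficients of $b$ over $\cA$, and ``canonical basis expansions preserve the $\e_i$-depth'' is not a statement you can simply assert. You should replace that paragraph with a direct appeal to the combinatorics of the order $\prec_{i-1}$ from Theorem~\ref{UpperTriangularityTheorem}, unwinding its definition carefully to see why the $\alpha_i$-coordinate is forced to vanish for all terms in the expansion.
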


\begin{proof}

Write $b(\cc,i-1) = \sum_{\bf d} a_\cc^\dd L(\dd, i-1)$. By Theorem \ref{UpperTriangularityTheorem}, every $\dd$ with $a_\cc^\dd \neq 0$  must have $\dd_{\alpha_i}=0$. In particular, $T_i L(\dd, i-1) \in \U^+$. This implies $T_i b(\cc,i-1) \in \U^+$, so we can apply Theorem \ref{SaitoReflectionThm} to conclude that $T_i b(\cc,i-1) = \sigma_ib(\cc,i-1)$ in $\cL/ q^{-1} \cL$. 

By Proposition \ref{prop:tL}, $L(\cc \circ s_i,i) = T_i L(\cc,i-1)$, and by definition $L(\cc \circ s_i,i) \equiv b(\cc \circ s_i,i)$ and $L(\cc,i-1) \equiv b(\cc,i-1)$ in $\cL/q^{-1} \cL$.
\end{proof}

\begin{Lemma}{\cite[Lemma 4.1]{BCP}\cite[Theorem 8.5,  c.f. Proof of Theorem 8.17 ]{Aka}}{\label{TriangleLemma}}

Fix a positive integer $n$. 
\begin{enumerate}

\item For $\asl_2$, let $\cc$ be the Lusztig datum satisfying $\cc(\alpha_0) = \cc(\alpha_1) = n$, and let $(n)$ denote the partition consisting of one part of length $n$.

\item For $\ATT$, let $\cc$ be the Lusztig datum satisfying $\cc(\alpha_0) = n,  \cc(\alpha_1) = 2n$, and let $(n)$ denote the partition consisting of one part of length $n$.

\end{enumerate}
Then for each $i$, $b(\cc,i) = b((n),i+1)$. \qed
\end{Lemma}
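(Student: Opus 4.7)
Plan. By Definition \ref{BCIDefinition}, the equality $b(\cc, i) = b((n), i+1)$ is equivalent to the congruence $L(\cc, i) \equiv L((n), i+1) \pmod{q^{-1}\cL}$, so the task is to compare the two PBW expressions modulo $q^{-1}\cL$.

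I would first use Kashiwara's involution $*$ to halve the number of cases. By Proposition \ref{PBWStarProposition}, $L(\cc, i)^* = L(\cc, i-1)$, and since $*$ fixes every canonical basis element, the statement for $i = 0$ implies the statement for $i = 1$. In type $\asl_2$ the Dynkin involution $\tau$ gives a further symmetry via the same proposition. This leaves one essential computation per type.

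For $\asl_2$ with $i = 0$, expanding \eqref{LCZero} and \eqref{LCOne} one needs
\begin{equation*}
E_1^{(n)} E_0^{(n)} \equiv T_0^{-1}(S_{(n)}) \pmod{q^{-1}\cL}.
\end{equation*}
The left-hand side is bar-invariant (each $E_i^{(k)}$ is, and bar is an algebra automorphism) and lies in Lusztig's integral form of $\U^+$; together with the congruence $E_1^{(n)} E_0^{(n)} \equiv b(\cc, 0) \pmod{q^{-1}\cL}$, the uniqueness characterization of the canonical basis forces $E_1^{(n)} E_0^{(n)} = b(\cc, 0)$ as an exact equality. For the right-hand side, Lemma \ref{BraidOpImagLemma} gives $T_0^{-1}(\tpsi_k) = T_1(\tpsi_k)$, hence $T_0^{-1}(S_{(n)}) = T_1(S_{(n)})$. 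The plan is then to write $S_{(n)}$ as the complete homogeneous symmetric function in the power sums $\tpsi_k$ (whose defining formula is \eqref{eq:aslpsi}), apply $T_1$ term by term, and reduce modulo $q^{-1}\cL$. I would proceed by induction on $n$, using the fact that most quantum-twist terms vanish in $\cL/q^{-1}\cL$ to collapse everything down to the simple product of Chevalley generators. The $\ATT$ case runs parallel, using \eqref{eq:attpsi} in place of \eqref{eq:aslpsi} and noting that the Lusztig datum $\cc$ is asymmetric between $\alpha_0$ and $\alpha_1$ to account for the different root lengths.

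Main obstacle. The braid-group computation of $T_1(S_{(n)})$ modulo $q^{-1}\cL$: expanding $T_1$ applied to an imaginary root vector such as $E_{k\delta - \alpha_1}$ introduces many cross-terms, and a careful induction on $n$ is required to track which survive and which land in $q^{-1}\cL$. This is precisely the explicit computation carried out in \cite[Lemma 4.1]{BCP} and \cite[Theorem 8.5]{Aka}.
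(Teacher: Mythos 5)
The paper itself does not prove this lemma: the {\qed} immediately after the statement indicates it is quoted directly from \cite[Lemma 4.1]{BCP} and \cite[Theorem 8.5]{Aka}. So there is no in-paper proof to compare against; what follows is an assessment of your outline.

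Your reductions are correct as far as they go. Applying Proposition \ref{PBWStarProposition} and Proposition \ref{prop:bases-coincide} does reduce the $i=1$ case to $i=0$, and unwinding Definition \ref{BCIDefinition} together with \eqref{LCZero}--\eqref{LCOne} shows that (for $\asl_2$, $i=0$) the claim is equivalent to the congruence $E_1^{(n)}E_0^{(n)} \equiv T_0^{-1}(S_{(n)}) \pmod{q^{-1}\cL}$. However, the detour through the exact equality $E_1^{(n)}E_0^{(n)} = b(\cc,0)$ via bar-invariance and Lusztig's characterization, while true, is unnecessary: by Definition \ref{BCIDefinition} the desired equality $b(\cc,0)=b((n),1)$ is literally the statement that $L(\cc,0)$ and $L((n),1)$ agree in $\cL/q^{-1}\cL$, so one may work with congruences throughout.

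The genuine gap is that the essential step --- computing $T_0^{-1}(S_{(n)})=T_1(S_{(n)})$ modulo $q^{-1}\cL$ by expanding through \eqref{eq:aslpsi} (resp.\ \eqref{eq:attpsi}) and tracking which cross-terms survive --- is not carried out; you explicitly defer it to the same two references the lemma already cites. Since that computation is the entire content of the lemma, the proposal does not, as written, go beyond the paper's citation. (If you do fill it in, note also that the $\tpsi_k$ are not literally power sums; they are commuting elements whose products of $S_\lambda$'s obey Littlewood--Richardson multiplication, so passing from $S_{(n)}$ to an expression in the $\tpsi_k$ requires the change of basis between Schur functions and power sums, and then one must control how $T_1$ and the reduction mod $q^{-1}\cL$ interact with that expansion.)
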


\begin{Remark} We have not stated the most general versions of the results in this section. With appropriate definitions Lemma \ref{ReorderingLemma}, Theorem \ref{UpperTriangularityTheorem}, and Proposition \ref{CanonicalBasisCrystalFormula} hold for all affine Kac-Moody algebras (c.f the discussion immediatedly following \cite[Theorem 3.13]{BN}). Theorem \ref{SaitoReflectionThm} holds for all symmetrizable Kac-Moody algebras.
\end{Remark}

\subsection{Relationship with MV Polytopes}

Recall from Definition \ref{BCIDefinition} that, since $\cB$ and the two PBW bases agree as crystal bases, we can parameterize $\cB$ by Lusztig data. We will use the notation $b = b(\cc^r_b,1) = b(\cc^\ell_b,0)$ to denote the Lusztig data corresponding to $b$ with respect to the two PBW bases (and will drop the subscripts of $b$ on $\cc^r$, $\cc^\ell$ where it will not cause confusion). Thus by Definition \ref{DecoratedPseudoWeylPolytope} each $b \in \cB$ defines a decorated pseudo-Weyl polytope $\text{PBW}_b$ whose left Lusztig datum is $\cc^\ell_b$ and whose right Lusztig datum is $\cc^r_b$. Equivalently, this gives a map $b \mapsto \text{PBW}_b$ from $B(-\infty)$ to decorated pseudo-Weyl polytopes.

\begin{Theorem}{\label{PBWPolytopeTheorem}} For each $b \in B(-\infty)$, $PBW_b$ coincides with the affine MV polytope $MV_b$ as defined in \cite{BDKT:??} (and in \S\ref{sec:asl2polys}). 
\end{Theorem}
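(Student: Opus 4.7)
My plan is to invoke Theorem \ref{th:unique-aff}: I will verify that the map $b \mapsto PBW_b$ satisfies axioms (W), (C1)--(C4), (S1)--(S4), and (I); uniqueness then yields $PBW_b = MV_b$ for every $b \in B(-\infty)$. Axiom (W) is immediate since the weight of $L(\cc, i)$ equals $\wt(\cc)$ by construction. Axioms (C1)--(C4) are direct restatements of Proposition \ref{ExplicitPBWCrystalFormula}, since each crystal operator $\e_i$ or $\e_i^*$ increments exactly one entry of the appropriate PBW Lusztig datum. (One may instead verify the single axiom (K) of Remark \ref{rem:K}, which follows from $L(\cc, i)^* = L(\cc, i-1)$ in Proposition \ref{prop:tL}: applying Kashiwara's involution interchanges $\lbc$ and $\rbc$, which is precisely negation of the pseudo-Weyl polytope.) Axioms (S1)--(S4) follow from Corollary \ref{SaitoReflectionFormula}: (C1) identifies $\varphi_0(b) = \rbc_{\alpha_0}(PBW_b)$ and (C2) identifies $\varphi_1(b) = \lbc_{\alpha_1}(PBW_b)$, so each hypothesis in (S1)--(S2) translates into the hypothesis $\cc_{\alpha_i}=0$ of the Corollary, whose conclusion matches the desired relation between $\lbc$ and $\rbc \circ s_i$. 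The starred variants (S3)--(S4) follow symmetrically using Kashiwara's involution.

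The main obstacle is axiom (I). I will prove it by induction on $|\lambda|$, leveraging Proposition \ref{rem:partial-char} to carry the inductive hypothesis forward. The base case $\lambda = (n)$ is exactly Lemma \ref{TriangleLemma}: the canonical basis element corresponding to the purely imaginary $S_{(n)}$ in basis $0$ coincides with the one parametrized by the single triangle Lusztig datum in basis $1$. For the inductive step, assume (I) for all $\mu$ with $|\mu| < |\lambda|$, and let $b$ be the canonical basis element with $\lbc(PBW_b) = (\emptyset, \lambda)$. I must show that $\rbc(PBW_b)$ equals the triangle datum $\cc_{\mathrm{trig}}$ having $\alpha_0$-entry $\lambda_1$, $\alpha_1$-entry $\frac{|\alpha_0|}{|\alpha_1|}\lambda_1$, and imaginary part $\lambda \setminus \lambda_1$. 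By uniqueness of the canonical basis, this reduces to the algebraic identity $S_\lambda \equiv L(\cc_{\mathrm{trig}}, 1) \pmod{q^{-1}\cL}$, which for $\asl_2$ unwinds to $S_\lambda \equiv E_0^{(\lambda_1)} T_0^{-1}(S_{\lambda \setminus \lambda_1}) E_1^{(\lambda_1)}$ modulo $q^{-1}\cL$ (and analogously in the $\ATT$ case, with $E_1^{(\lambda_1)}$ replaced by $E_1^{(2\lambda_1)}$).

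The hardest step is proving this identity in $\cL/q^{-1}\cL$. My strategy is to expand the right-hand side in the left PBW basis using Lemma \ref{ReorderingLemma} together with the commutation relations for $\tpsi_k$ and the braid generators (Lemmas \ref{lem:*psi} and \ref{BraidOpImagLemma}), and to identify $S_\lambda = L((\emptyset, \lambda), 0)$ as the leading term with respect to the order $\prec_0$. All correction terms should involve Lusztig data with strictly smaller imaginary partition, at which point Proposition \ref{rem:partial-char} combined with the inductive hypothesis will force them to vanish modulo $q^{-1}\cL$. Once (I) is established for $\lambda$, Theorem \ref{th:unique-aff} delivers the conclusion.
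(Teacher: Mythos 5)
Your overall framework is correct and matches the paper: you invoke Theorem~\ref{th:unique-aff}, verify axioms (W), (C1)--(C4), (S1)--(S4), and reduce the whole problem to (I). The verification of (W), (C1)--(C4), and (S1)--(S4) is essentially identical to the paper's, including the use of Proposition~\ref{ExplicitPBWCrystalFormula} and Corollary~\ref{SaitoReflectionFormula}. Your reduction of (I) to the equality $b(\lambda,0)=b(\cc_{\mathrm{trig}},1)$ in $\cL/q^{-1}\cL$ is also right.

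However, there is a genuine gap in your proposed proof of (I), and your sketched strategy diverges from the paper's in ways that would not close it. First, your induction is on $|\lambda|$ alone, with inductive hypothesis ``(I) for all $\mu$ with $|\mu|<|\lambda|$.'' This is too coarse. The calculation inevitably involves the Pieri rule $S_{(\lambda_1)}S_{\lambda\setminus\lambda_1}=S_\lambda+\sum_\mu S_\mu$, and the correction partitions $\mu$ satisfy $|\mu|=|\lambda|$; they are \emph{not} smaller in the size ordering. The paper handles this by inducting on a refined total order in which $\mu<\lambda$ also when $|\mu|=|\lambda|$ and $\mu_1>\lambda_1$ (reverse lexicographic on parts), which is exactly what the Pieri rule supplies. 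Under your hypothesis the relevant $\mu$ are not covered.

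Second, the core of your plan --- ``expand $L(\cc_{\mathrm{trig}},1)$ in the left PBW basis using Lemma~\ref{ReorderingLemma} and the braid/$\tpsi_k$ relations, identify $S_\lambda$ as leading term, and argue the correction coefficients lie in $q^{-1}\cA$'' --- is not actually a route the available tools support. Lemma~\ref{ReorderingLemma} controls reordering \emph{within} a single PBW basis $L(\cdot,i)$; it gives no grip on a change of basis $L(\cdot,1)\to L(\cdot,0)$, and Theorem~\ref{UpperTriangularityTheorem} is about the PBW-to-canonical change, not PBW-to-PBW. More importantly, the paper never shows that any correction coefficient is in $q^{-1}\cA$. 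Instead it runs the argument through the canonical basis: it starts from the \emph{parallelogram} element $b(\cc,0)$ (whose polytope is already identified by Proposition~\ref{rem:partial-char} since its imaginary part is $\lambda\setminus\lambda_1$, of strictly smaller size), computes $E_0^{(\lambda_1)}b(\cc,0)$ in $\cB$ using Theorem~\ref{UpperTriangularityTheorem}, Lemma~\ref{ReorderingLemma}, Lemma~\ref{TriangleLemma}, and the Pieri rule, and then invokes Proposition~\ref{CanonicalBasisCrystalFormula} to guarantee that $b(\dd,0)=\e_0^{\lambda_1}b(\cc,0)$ appears with nonzero coefficient. The contradiction argument that rules out $\dd\ne\lambda$ (split into the non-imaginary case, handled by Proposition~\ref{rem:partial-char}, and the imaginary $\mu<\lambda$ case, handled by the inductive hypothesis applied to the $\rbc$-side) is where the work lives. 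You do not mention Proposition~\ref{CanonicalBasisCrystalFormula} at all, and without it there is no way to know which PBW index survives, so your sketch as written does not close.
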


\begin{proof} 

It suffices to show that the map $b \rightarrow PBW_b$ satisfies the conditions of Theorem \ref{th:unique-aff}. Condition (W) is immediate since $\wt(\cc) = \wt( b(\cc,i) )$. Conditions (C1)-(C4) are immediate from Proposition \ref{ExplicitPBWCrystalFormula}. Conditions (S1) and (S2) are Corollary \ref{SaitoReflectionFormula}, and (S3)-(S4) follow from these using Proposition \ref{PBWStarProposition}.

All that remains is to check condition (I). That is, to show that, if $\lbc(PBW_b)=\lambda$, then
\begin{equation} 
\text{$\rbc_{\alpha_1}(PBW_b)= \frac{|\alpha_0|}{|\alpha_1|} \lambda_1$, $\rbc_\delta(PBW_{b})=\lambda \backslash \lambda_1$, $\rbc_{\alpha_0}(PBW_{b}) =\lambda_1$,}
\end{equation}
and $\rbc_\beta(PBW_b)=0$ for all other $\beta.$ 
We proceed by induction on $\lambda$, using the total order where $\lambda<\lambda'$ if
\begin{enumerate}
\item $|\lambda| < |\lambda'|$, or
\item $|\lambda| = |\lambda'|$ and $ (\lambda_1, \lambda_2, \ldots) >_\text{lex}   (\lambda'_1, \lambda'_2, \ldots)$.
\end{enumerate}
So, fix $\lambda$, and assume that (I) holds for all $\lambda' < \lambda$. In particular, we can apply Proposition \ref{rem:partial-char}.

It is easy to see that there is a unique $b \in B(-\infty)$ with the following MV polytope:  
\begin{center}
\begin{tikzpicture}
\begin{scope}[xscale=-0.6, yscale=-0.2]
\draw

(0,0) node {$\bullet$}
(0,6) node {$\bullet$}
(2,2) node {$\bullet$}
(2,8) node {$\bullet$};

\draw[line width = 0.05cm] 
(0,0)--(2,2)--(2,8)--(0,6)--cycle;

\draw
(2.4,-0.6) node {$\lbc_{\alpha_1} = \frac{|\alpha_0|}{|\alpha_1|}\lambda_1$}
(3.8,5) node {$\lbc_\delta= \lambda \backslash \lambda_1$}
(-2,3.5) node {$\rbc_\delta=\lambda \backslash \lambda_1$}
(0,8.8) node {$\rbc_{\alpha_1}=\frac{|\alpha_0|}{|\alpha_1|}\lambda_1$};
\end{scope}
\end{tikzpicture}
\end{center}

\noindent That is,
\begin{equation} \label{eq:2c}
\cc^r(MV_b)_\delta = \cc^\ell(MV_b)_\delta=  \lambda \backslash \lambda_1, \quad  \cc^r(MV_b)_{\alpha_1} =  \cc^\ell(MV_b)_{\alpha_1} = \frac{|\alpha_0|}{|\alpha_1|} \lambda_1,
\end{equation}
and all other entries are $0$. By Proposition \ref{rem:partial-char}, $PBW_b$ has these same Lusztig data. That is, $b = b(\cc,0) = b(\cc,1)$, where $\cc= \lbc(MV_b)$.  Let $\dd$ be the Lusztig datum such that $b(\dd,0) = \e^{\lambda_1}_0 b(\cc,0)$. Using the action of $\tilde e_0$ on the $0$-PBW basis and the fact that the map from $B(-\infty)$ to PBW basis elements is injective, it suffices to show $\dd=\lambda$.

Using the upper triangularity of the PBW basis, 
\begin{align}
b(\cc,0) = E_1^{( \frac{|\alpha_0|}{|\alpha_1|} \lambda_1)}S_{\lambda \backslash \lambda_1} + \sum_{\cc^\prime \succ_0 \cc} a_{\cc^\prime,\cc} L(\cc^\prime,0).
\end{align}
Multiplying both sides on the left by $E_0^{(\lambda_1)}$, 
\begin{align}
E_0^{(\lambda_1)}b(\cc,0) = E_0^{(\lambda_1)}E_1^{(\frac{|\alpha_0|}{|\alpha_1|} \lambda_1)}S_{\lambda \backslash \lambda_1} + \sum_{\cc^\prime \succ_0 \cc} a_{\cc^\prime,\cc} E_0^{(\lambda_1)}L(\cc^\prime,0).
\end{align}
For each $\cc^\prime \succ_0 \cc$, rewrite $E_0^{(\lambda_1)} L(\cc^\prime,0)$ in the 0-PBW basis. Since $\cc'$ must have $\cc'_{\alpha_0+k \delta} \neq 0$ for some $k$, by Lemma \ref{ReorderingLemma}, no purely imaginary terms appear. When we subsequently expand these in $\cB$, by Theorem \ref{UpperTriangularityTheorem} we still don't get any purely imaginary terms.

Now expand $E_0^{(\lambda_1)}E_1^{(\frac{|\alpha_0|}{|\alpha_1|} \lambda_1)}S_{\lambda \backslash \lambda_1}$ in the basis $\cB$.  By Theorem \ref{UpperTriangularityTheorem} and Lemma \ref{TriangleLemma}, 
$$E_0^{(\lambda_1)}E_1^{(\frac{|\alpha_0|}{|\alpha_1|} \lambda_1)} = S_{(\lambda_1)} + \sum_{\cc^\prime \succ_0 (\lambda_1)} a_{\cc^\prime,(\lambda_1)} L(\cc^\prime,0),$$ 
and none of the $\cc^\prime$ that appear are purely imaginary. As before, the same remains true when we expand in the basis $\cB$.

By the Pieri rule (see e.g. \cite[Formula 4.14]{BCP}),
$$S_{(\lambda_1)} S_{\lambda \backslash \lambda_1} = S_\lambda + \sum_{\mu} S_\mu,$$ 
where the $\mu$ that appear all satisfy $\mu < \lambda$ in the order defined above. 

Suppose for $\dd \neq \lambda$. By Proposition \ref{CanonicalBasisCrystalFormula} the element $b(\dd,0)$ must show up with non-zero coefficient when $E_0^{(\lambda_1)}b(\cc,0)$ is written in the basis $\cB$. So either $\dd$ is not purely imaginary, or $\dd$ is purely imaginary and equal to $\mu$ with $\mu < \lambda$. 

If $\dd$ is not purely imaginary, then $|\dd_\delta| < |\lambda|$. So by Proposition \ref{rem:partial-char}, $\dd =  \lbc ( PBW_{\e^{\lambda_1}_0 b})=\lbc(MV_{\e^{\lambda_1}_0 b})= \lambda$, which is a contradiction.

If $\dd = \mu$ with $\mu < \lambda$, then (I) holds by induction.  We then see that $\rbc(PBW_{\e^{\lambda_1}_0 b})= \cc'$, where $\cc'_\delta = \mu \backslash \mu_1, \cc'_{\alpha_0} = \mu_1, \cc'_{\alpha_1} = \frac{|\alpha_0|}{|\alpha_1|} \mu_1$ and otherwise zero. But 
from \eqref{eq:2c} and the definition of $\tilde e_0$, we see that $\rbc(PBW_{\e^{\lambda_1}_0 b})= \cc''$, where $\cc''_\delta = \lambda \backslash \lambda_1, \cc''_{\alpha_0} = \lambda_1, \cc''_{\alpha_1} = \frac{|\alpha_0|}{|\alpha_1|} \lambda_1$, which is a contradiction.

\end{proof}

\section{Comparing combinatorial and geometric $\asl_2$ MV polytopes} \label{sec:quiver}

 \subsection{The $\asl_2$ quiver variety}
We will largely follow the conventions of \cite[\S7.4]{BKT}. Let $\tilde{Q}$ be the quiver
$$
\begin{tikzpicture}
\node (0) at (0,0){$0$};
\node (1) at (3,0){$1$};
\node (0N) at (0,0.2){};
\node (0S) at (0,-0.2){};
\node (1N) at (3,0.2){};
\node (1S) at (3,-0.2){};
\draw[->,dotted] (0N) to[out=30,in=150]
  node[midway,above]{$\scriptstyle\alpha$} (1N);
\draw[->] (0) to[out=30,in=150]
  node[midway,below]{$\scriptstyle\beta$} (1);
\draw[->,dotted] (1) to[out=-150,in=-30]
  node[midway,above]{$\scriptstyle\alpha^*$} (0);
\draw[->] (1S) to[out=-150,in=-30]
  node[midway,below]{$\scriptstyle\beta^*$} (0S);
\end{tikzpicture}
$$

\setlength{\unitlength}{0.5cm}

\noindent Let $e_0$ and $e_1$ denote the lazy paths at the vertices $0$ and $1$ respectively. 
The preprojective algebra $\Pi$ is the quotient of the completed path algebra of $\tilde{Q}$ (completed with respect to the ideal generated by $\alpha, \alpha^*, \beta, \beta^*$) by the relations 
\begin{equation*}
\alpha \alpha^*+\beta \beta^*=0, \quad \alpha^* \alpha+\beta^* \beta=0.
\end{equation*}

A representation $T$ of $\Pi$ consists of a $\{0,1\}$-graded vector space $V = V_0 \oplus V_1$ and a $4$-tuple of linear operators $(t_\alpha,t_\beta: V_0 \rightarrow V_1,t_{\alpha^*},t_{\beta^*}: V_1 \rightarrow V_0)$ that satisfy 
$$t_\alpha t_{\alpha^*}+t_\beta t_{\beta^*}=0  \quad \text{  and  } \quad  t_{\alpha^*} t_\alpha+t_{\beta^*} t_\beta=0,$$ 
and which is nilpotent in the sense that, for some $N$ and any path $a_N \cdots a_1$ in $\tilde Q$, $t_{a_n} \cdots t_{a_1}=0$. 

Given an element $\mu = n \alpha_0 + m \alpha_1$ in the positive root lattice for $\asl_2$, let $\Pi(\mu)$ be variety of $\Pi$-representations on a fixed $\{0,1\}$-graded vector space $V^\mu = V^\mu_0 \oplus V^\mu_1$ with $\dim V^\mu_0 = n$ and $\dim V^\mu_1 = m$. We refer to $\mu$ as the dimension vector of $V^\mu$, and we will drop the superscripts $\mu$ when they are clear from context. 
Let $S_0$ and $S_1$ be the simple modules of dimension vectors $\alpha_0$ and $\alpha_1$ respectively (where all four maps $t_\alpha,t_\beta,t_{\alpha^*},t_{\beta^*}$ are $0$).

Let $\Irr \Pi(\mu)$ denote the set of irreducible components of $\Pi(\mu)$. Kashiwara and Saito \cite{KS:1997} show that 
\begin{equation}
 \coprod_{\mu} \Irr \Pi({\mu}),
 \end{equation} 
gives a realization of the crystal $B(-\infty)$, where the crystal operator $\e_i$ can be defined as follows:

For each $Z \in \Irr \Pi({\mu})$, there is a dense open subset $U \subset Z$ such that each $T \in U$ has $i$-cosocle of the same dimension $n$. For each $T \in U$, let $T' = \ker ( T \rightarrow S_i^{\oplus n})$. Let $W$ be the set of modules $T''$ fitting into a short exact sequence as below for some $T \in U$.  

$$ 0 \rightarrow T' \rightarrow  T'' \rightarrow S_i^{\oplus n+1} \rightarrow 0$$
It is known that there is a unique irreducible component  $Z' \in \Irr \Pi({\mu+\alpha_i})$ such that $W \cap Z'$ is dense in $Z'$. Kashiwara and Saito then define $\e_i Z =Z'$.

The map $\alpha \leftrightarrow \alpha^*$ and $\beta \leftrightarrow \beta^*$ extends uniquely to an algebra anti-involution of $\Pi$.
Given a $\Pi$-module $M$, the dual module is naturally a right $\Pi$ module, but we can twist the action by the above anti-involution to get a new left $\Pi$ module. We denote this left $\Pi$ module by $M^*$. Given $Z \in \Irr \Pi(\mu)$, 
$$\{ S \in \Pi(\mu) \mid S \simeq T^* \text{ for some } T \in Z \}$$ 
is also an irreducible component of $\Pi(\mu)$, which we denote by $Z^*$. In the above realization of $B(-\infty)$, the map $Z \rightarrow Z^*$ is Kashiwara's involution as discussed in \S\ref{ss:crystals}.

We also have an algebra automorphism $\tau$ of $\Pi$ defined on generators by the map $\alpha \leftrightarrow \alpha^*$, $\beta \leftrightarrow \beta^*$, and $e_0 \leftrightarrow e_1$. 
Twisting by $\tau$ induces an involutive auto-equivalence $R \rightarrow R^\tau$ on the category of left $\Pi$ modules and defines an involution on the set $\bigsqcup_{\mu} \Irr \Pi({\mu})$, inducing the $\asl_2$ diagram automorphism on $B(-\infty)$.

\subsection{Reflection functors and Harder-Narasimhan filtrations} 
Here we review the filtrations given in \cite[Theorems 5.11 and 5.12]{BKT}. We must first introduce the reflection functors $\Sigma_i$ and $\Sigma_i^*$ for $i \in \{0,1\}$ from \cite{BK??, BIRS09}.

\begin{Definition}
For $i \in \{0,1\}$ define the $\Pi-\Pi$ bimodule $I_i = \Pi(1-e_i)\Pi = \Pi e_{i+1} \Pi$ (subscripts taken modulo 2). If $s_{i_1} \cdots s_{i_k}$ is a reduced expression in the Weyl group, define $I_{s_{i_1} \cdots s_{i_k}} = I_{i_1} \otimes_\Pi \cdots \otimes_\Pi I_{i_k}$. 
\end{Definition}

As shown in \cite{BIRS09}, the bimodule $I_{s_{i_1} \cdots s_{i_k}}$ depends only on the Weyl group element $w = s_{i_1} \cdots s_{i_k}$ and not on the reduced expression. We need the following two endofunctors on the category of finite-dimensional $\Pi$ modules.

\begin{Definition}
$\Sigma_i = \text{Hom}_\Pi(I_i,?)$ and $\Sigma_i^* = I_i \otimes_\Pi ?$.
\end{Definition}
These functors are geometric lifts of Saito's crystal reflections in the following sense:
\begin{Proposition} \cite[Theorem 5.3]{BK??} \label{prop:functorial-Saito}
Fix $b \in B(-\infty)$ such that $\f_i(b)=0$. Let $Z_b$ and $Z_{\sigma_i(b)}$ be the irreducible components corresponding to $b$ and $\sigma_i(b)$ respectively, where $\sigma_i$ is Saito's reflection. For generic $T \in Z_b$, $\Sigma_i T$ is isomorphic to a point in $Z_{\sigma_i(b)}$, and furthermore this point in generic in the sense that the decorated Pseudo-Weyl polytope associated to $Z_{\sigma_i(b)}$ can be calculated using $\Sigma_i T$.

Similarly if $b \in B(-\infty)$ is such that $\f_i^*(b)=0$, then for generic $T \in Z_b$, $\Sigma_i^* T$ is isomorphic to a generic point in $Z_{\sigma_i^*(b)}$.
\qed
\end{Proposition}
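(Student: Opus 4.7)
The plan is to derive this from the general machinery of reflection functors on preprojective algebras, essentially following [BK??, Theorem 5.3]. The crystal-theoretic hypothesis $\f_i(b)=0$ translates, via the Kashiwara-Saito realization recalled in \S4.1, into a vanishing condition on generic $T \in Z_b$: namely $\text{Hom}_\Pi(T, S_i) = 0$, i.e. the $i$-cosocle of $T$ is trivial. This is exactly the condition that makes $\Sigma_i = \text{Hom}_\Pi(I_i, -)$ exact near $T$ and ensures no higher derived contributions. Using the standard two-term projective resolution of the bimodule $I_i$, one then reads off that $\underline{\dim}(\Sigma_i T) = s_i \cdot \underline{\dim}(T)$, which matches $\wt(\sigma_i(b))$.

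Next I would show that the unit of the adjunction $T \to \Sigma_i^* \Sigma_i T$ is an isomorphism for generic such $T$, producing a birational correspondence between an open subset of $Z_b$ and an open subset of some irreducible component $Z'$ of $\Pi(s_i \cdot \underline{\dim} T)$. To identify $Z'$ with $Z_{\sigma_i(b)}$, one verifies that $\Sigma_i$ intertwines the Kashiwara-Saito crystal operators in the manner dictated by the formula $\sigma_i(b) = (\e_i^*)^{\varepsilon_i(b)} \f_i^{\varphi_i(b)} b$. This intertwining comes from the explicit short exact sequences used to define $\e_i^*$ at the module level, together with the compatibility of $\Sigma_i$ with extensions by $S_i$.

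For the second assertion, that $\Sigma_i T$ is itself generic enough in $Z_{\sigma_i(b)}$ to compute the decorated pseudo-Weyl polytope, I would use that this polytope is already determined by the dimension vectors of subrepresentations of a single sufficiently generic module, together with the partition invariant attached to the imaginary-slope piece. Both pieces of data can be transported functorially under $\Sigma_i$ because it is exact on the relevant subcategory and sends subrepresentations to subrepresentations (up to the controlled $S_i$-contribution). The main obstacle is the partition data: tracking it along the codimension-one faces parallel to $\delta$ requires an explicit analysis of how $\Sigma_i$ acts on regular modules whose dimension vector is a multiple of $\delta$. In the rank-two cases at hand this reduces to a direct computation with tube/regular modules, but it is technically the most delicate step, and is where one must appeal most seriously to the structural results of [BK??, BIRS09] rather than rely on formal adjunction arguments.
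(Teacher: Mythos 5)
The paper does not prove this proposition; it is stated with a \qed immediately after the statement, and the entire content is imported by citation from Baumann--Kamnitzer \cite[Theorem 5.3]{BK??}. So there is no internal proof against which to compare your sketch, and the right question is whether your outline correctly reconstructs what the cited result does.

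Your outline is broadly plausible, but two things deserve comment. First, a technical slip in the adjunction: since $\Sigma_i^* = I_i \otimes_\Pi -$ is left adjoint to $\Sigma_i = \text{Hom}_\Pi(I_i, -)$, the natural morphism you want is the \emph{counit} $\Sigma_i^* \Sigma_i T \to T$, not ``the unit $T \to \Sigma_i^* \Sigma_i T$''; the counit is an isomorphism precisely when $T$ has trivial $i$-cosocle, which is what ``$\f_i(b)=0$ generically'' gives you (this is also exactly the statement ``\cite[Equation 5.2]{BKT}'' that the paper invokes in the proof of Proposition~\ref{prop:quiverreflectionformula}). The phrase ``makes $\Sigma_i$ exact near $T$'' should likewise be replaced by the precise condition $\mathrm{Ext}^1_\Pi(I_i,T)=0$, which is the form of the statement the paper actually uses later (see the proof of Lemma~\ref{lem:saitofiltration}). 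Second, the affine-decoration part: you correctly flag the tracking of partitions as the delicate point, but note that this is not what the cited \cite[Theorem 5.3]{BK??} handles (that reference is finite-type and has no $\delta$-decorations). In this paper, the content of the second sentence of the proposition is merely that $\Sigma_i T$ is a \emph{generic} point of $Z_{\sigma_i(b)}$; the actual bookkeeping of how $\Sigma_i$ moves the partition data is done separately via Proposition~\ref{prop:quiverreflectionformula}~(iii),(vi),(ix),(xii) and Lemma~\ref{lem:saitofiltration}, which are proved in the paper. So your proposal conflates the cited black box with the computations the paper carries out on top of it; neither piece is wrong, but it is worth separating them.
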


To make this construction concrete in the case of $\asl_2$, it is convenient to introduce notation for the following special $\Pi$-modules.

\begin{Definition} \label{def:real-mods} 
For $k \geq 0$, 
\begin{enumerate}
 \item \label{ji1} $R^\ell(\alpha_1+ k \delta) = I_{s_1 \cdots s_k} \otimes_\Pi S_{k+1} = \Sigma^*_1 \cdots \Sigma^*_k S_{k+1}$
 \item \label{ji2} $R^\ell(\alpha_0 + k\delta) = \text{Hom}_\Pi (I_{s_{k-1} s_{k-2} \cdots s_0}, S_k) = \Sigma_0  \cdots \Sigma_{k-1} S_k$.
 \item \label{ji3} $R^r(\alpha_1+ k \delta) =  \text{Hom}_\Pi (I_{s_{k} s_{k-1} \cdots s_1}, S_{k+1}) = \Sigma_1  \cdots \Sigma_{k} S_{k+1}$
 \item \label{ji4}  $R^r(\alpha_0+ k \delta) =I_{s_0 \cdots s_{k-1}} \otimes_\Pi S_k = \Sigma^*_0 \cdots \Sigma^*_{k-1} S_k $.
\end{enumerate}
That the two definitions given on lines \eqref{ji1} and \eqref{ji4} agree follows by the definition of the reflection functors, and for \eqref{ji2} and \eqref{ji3} this follow by applying duality to  \eqref{ji1} and \eqref{ji2}. 
\end{Definition}

\noindent One can easily verify that these modules are as shown in Figure \ref{Fig:reps}, and in particular
$$\begin{aligned}
& \dim R^\ell(\alpha_1+ k \delta) =  \dim R^r(\alpha_1+ k \delta) = \alpha_1+ k \delta, \\
& \dim R^\ell(\alpha_0+ k \delta) =  \dim R^r(\alpha_0+ k \delta) = \alpha_0+ k \delta, \\
& R^r(\alpha_1+ k \delta) =R^\ell(\alpha_1+ k \delta)^*, \quad R^r(\alpha_0+ k \delta) =R^\ell(\alpha_0+ k \delta)^*, \\
& R^\ell(\alpha_0+ k \delta) =R^r(\alpha_1+ k \delta)^\tau, \quad R^\ell(\alpha_1+ k \delta) =R^r(\alpha_0+ k \delta)^\tau.
\end{aligned}
$$

\begin{figure}[ht]
\begin{center}

\setlength{\unitlength}{0.5cm}

\begin{tikzpicture}[scale=0.5]

\draw 
node at (0.5,2) {$0$}
node at (1.5,2) {$0$}
node at (3.5,2) {$0$}
node at (4.5,2) {$0$}
node at (2.5,2){$\cdots$}
node at (2.5,0.15){$\cdots$};

\draw[->] (0.4,1.6)--(0,0.4);
\draw[->] (1.4,1.6)--(1,0.4);
\draw[->] (3.4,1.6)--(3,0.4);
\draw[->] (4.4,1.6)--(4,0.4);

\draw[dotted,->] (0.6,1.6)--(1,0.4);
\draw[dotted,->] (1.6,1.6)--(2,0.4);
\draw[dotted,->] (3.6,1.6)--(4,0.4);
\draw[dotted,->] (4.6,1.6)--(5,0.4);

\draw
node at (0,0){$1$}
node at (1,0){$1$}
node at (4,0){$1$}
node at (5,0){$1$}
;

\draw node at (2.5,-1.5){$R^\ell(\alpha_1 + (j-2) \delta)$};

\end{tikzpicture}
\hspace{1in}
\begin{tikzpicture}[scale=0.5]

\draw
node at (2.5,2){$\cdots$}
node at (2.5,0.15){$\cdots$}
node at (0,2){$0$}
node at (1,2){$0$}
node at (4,2){$0$}
node at (5,2){$0$}
;

\draw[->] (0.9,1.6)--(0.5,0.4);
\draw[->] (1.9,1.6)--(1.5,0.4);
\draw[->] (3.9,1.6)--(3.5,0.4);
\draw[->] (4.9,1.6)--(4.5,0.4);

\draw[dotted,->] (0.1,1.6)--(0.5,0.4);
\draw[dotted,->] (1.1,1.6)--(1.5,0.4);
\draw[dotted,->] (3.1,1.6)--(3.5,0.4);
\draw[dotted,->] (4.1,1.6)--(4.5,0.4);

\draw 
node at (0.5,0){$1$}
node at (1.5,0){$1$}
node at (3.5,0){$1$}
node at (4.5,0){$1$}
;

\draw node at (2.5,-1.5){\small $R^\ell(\alpha_0 + (j-1) \delta)$};

\end{tikzpicture}

\end{center}

\caption{\label{Fig:reps} The representations from Definition \ref{def:real-mods}. In each case, the number of $0$ is $j$. Here the vertices represent basis elements, the dotted arrows represent matrix elements of $1$ for $t_\alpha$, and solid arrows represent matrix elements of $1$ for $t_\beta$, and all other matrix elements are $0$. }
\end{figure}

By \cite[Theorems 5.11 and 5.12]{BKT}, any finite dimensional representation $T$ of $\Pi$ admits a filtration
\begin{equation} \label{T:filt}
T = T^{\ell,0} \supset T^{\ell,1} \supset T^{\ell,2} \supset \cdots \supset T^{\ell,\infty} \supset T^\ell_\infty \supset \cdots \supset T^\ell_2 \supset T^\ell_1 \supset T^\ell_0 = 0 
\end{equation}
given by the following explicit formulas:

\begin{enumerate}

\item $T^{\ell,k} = \Sigma^*_1  \cdots \Sigma^*_k \Sigma_k \cdots \Sigma_1 T$

\item $T^\ell_k = \ker (T \rightarrow \Sigma_0 \cdots \Sigma_{k-1} \Sigma^*_{k-1} \cdots \Sigma^*_0 T).$ 

\item $T^\ell_\infty = \bigcup_k T^\ell_k$ and $T^{\ell,\infty} = \bigcap_k T^{\ell,k}$.

\end{enumerate}
These satisfy the following properties:
\begin{enumerate}
  \setcounter{enumi}{3}

\item For all $k$, $T^\ell_{k+1}/T^\ell_k$ is a direct sum of copies of $R^\ell(\alpha_0+k \delta)$. 

\item For all $k$, $T^{\ell,k}/T^{\ell,k+1}$ is a direct sums of copies of $R^\ell(\alpha_1+k \delta)$. 

\item \label{c:no-sub} No subrepresentation $S \subset T^\ell_\infty$ has $\langle \dim S, \alpha_0 \rangle >0$.

\item \label{c:no-quotient} No quotient representation $S$ of $T^{\ell,\infty}$ has $\langle \dim S, \alpha_0 \rangle <0$.
\end{enumerate}
There is also a filtration
\begin{equation} \label{T':filt}
T = T^{r,0} \supset T^{r,1} \supset T^{r,2} \supset \cdots \supset T^{r,\infty} \supset T^r_\infty \supset \cdots \supset T^r_2 \supset T^r_1 \supset T^r_0 = 0 
\end{equation}
given by:
\begin{enumerate}
\item $T^{r,k} =  \Sigma^*_0 \cdots \Sigma^*_{k-1} \Sigma_{k-1} \cdots \Sigma_0 T$

\item $T^r_k = \ker (T \rightarrow \Sigma_1 \cdots \Sigma_k \Sigma^*_k \cdots \Sigma^*_1 T)$ 

\item $T^r_\infty = \bigcup_k T^r_k$ and $T^{r,\infty} = \bigcap_k T^{r,k}$.

\end{enumerate}
which has the same properties as the first filtration, except the modules $R^\ell(\alpha_0+k \delta)$ and $R^\ell(\alpha_1+k \delta)$ are replaced with $R^r(\alpha_1+k \delta)$ and $ R^r(\alpha_0+k \delta)$ respectively, and $\alpha_1$ and $\alpha_0$ are interchanged in the above list of properties.

Following \cite[\S7.4]{BKT}, let $\Pi(n\delta)^\times$ be the subvariety of $\Pi(n\delta)$ consisting of those $4$-tuples of  operators $(t_\alpha,t_\beta,t_{\alpha^*},t_{\beta^*})$  where $t_\alpha$ is invertible. Define $I^\ell(n)$ to be the subvariety of $\Pi(n\delta)^\times$ where $t_{\beta^*} t_\alpha$ is nilpotent of order $n$ (i.e. $(t_{\beta^*} t_\alpha)^n = 0$, but $(t_{\beta^*}t_\alpha)^{n-1} \neq 0$), and notice that $I^\ell(n)$ consists only of indecomposable modules. 
By the discussion in \cite{BKT}, $I^\ell(n)$ is an open subset of an irreducible component of $\Pi(n\delta)$. Similarly, we define $I^r(n).$

\begin{Proposition} \cite[Proposition 7.11]{BKT} \label{prop:ima-part} Fix an irreducible component $Z$. There is a unique partition $\lambda^\ell = \lambda^\ell_1 \geq \cdots \geq \lambda^\ell_k$ such that, for all $T$ in some open dense subset of $Z$, $T^{\ell,\infty}/T^\ell_\infty$ can be decomposed as $\bigoplus_i T_{\lambda^\ell_i}$, where $ T_{\lambda^\ell_i} \in I^\ell(\lambda^\ell_i)$. 

Similarly there is a partition $\lambda^r$ such that, for a generic $T \in Z$, $T^{r,\infty}/T^r_\infty $ can be decomposed as $\bigoplus_i T_{\lambda^r_i}$, where $ T_{\lambda^r_i} \in I^r(\lambda^r_i)$.
\qed
\end{Proposition}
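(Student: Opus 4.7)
The plan is to combine three ingredients: first, show that for every $T$ in $Z$ the middle subquotient $M(T) := T^{\ell,\infty}/T^\ell_\infty$ is purely imaginary in the sense that $\dim M(T)\in\mathbb{Z}_{\ge 0}\cdot\delta$, and that on an open subset of $Z$ the operator $t_\alpha$ acts invertibly on $M(T)$ (so that $M(T)\in\Pi(n\delta)^\times$); second, apply Krull--Schmidt to decompose $M(T)$ into indecomposables, each of which will lie in some $I^\ell(m)$; third, use a stratification argument on the irreducible variety $Z$ to pin down a partition $\lambda^\ell$ that is constant on an open dense subset.

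For the purity step I would argue as follows. Quotients of $M(T)$ are precisely quotients of $T^{\ell,\infty}$ by submodules containing $T^\ell_\infty$, so property (vii) of the filtration forces $M(T)$ to have no quotient $N$ with $\langle\dim N,\alpha_0\rangle<0$. Dually, any submodule $S\subset M(T)$ lifts to $T^\ell_\infty\subset\tilde S\subset T^{\ell,\infty}$, and the filtration's construction --- in which $T^\ell_\infty$ is maximal among submodules whose composition factors live only among the $R^\ell(\alpha_0+k\delta)$ --- combined with (vi) rules out $\langle\dim S,\alpha_0\rangle>0$. These two extremality conditions together force $\dim M(T)\in\mathbb{Z}_{\ge 0}\cdot\delta$, and a parallel argument applied to $\ker(t_\alpha|_{M_0})$ and $\operatorname{coker}(t_\alpha)$ --- each of which would provide a sub- or quotient-module of $M(T)$ violating the balance just established --- forces $t_\alpha$ to be invertible on $M(T)$ over an open dense subset of $Z$.

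Once $M(T)\in\Pi(n\delta)^\times$, Krull--Schmidt gives a unique decomposition into indecomposable summands. The global nilpotency of $\Pi$ together with the structure theory of the preprojective $\tilde A_1$-algebra on the $t_\alpha$-invertible locus implies every indecomposable summand of dimension $m_i\delta$ has $t_{\beta^*}t_\alpha$ a single nilpotent Jordan block of size $m_i$, hence lies in $I^\ell(m_i)$ by definition. The multiset $\{m_i\}$ defines a partition $\lambda^\ell$ depending only on the isomorphism class of $M(T)$. Stratifying $Z$ by this isomorphism class yields a constructible stratification, and irreducibility of $Z$ forces a unique open dense stratum on which $\lambda^\ell$ is constant, giving the desired $\lambda^\ell(Z)$.

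The main obstacle will be the purity step: carefully propagating properties (vi) and (vii) through the short exact sequence $0\to T^\ell_\infty\to T^{\ell,\infty}\to M(T)\to 0$ to simultaneously pin down $\dim M(T)$ and the invertibility of $t_\alpha$, since the extremality built into the filtration construction is the only tool available for ruling out real-root contributions to the middle. The $I^r$-version of the proposition then follows from the symmetric argument applied to the right filtration \eqref{T':filt} in place of \eqref{T:filt}, with $I^\ell$ replaced by $I^r$ throughout.
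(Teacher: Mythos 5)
This proposition is cited from \cite[Proposition~7.11]{BKT} with no proof given in the present paper, so the comparison has to be against what such a proof must contain. Your high-level outline (reduce the middle subquotient $M(T)$ to dimension in $\mathbb{Z}_{\ge 0}\cdot\delta$ using the HN-type extremality of the filtration, then Krull--Schmidt, then constructibility on the irreducible $Z$) is in the right direction, and the purity-of-dimension step is essentially right modulo spelling out the maximality of $T^\ell_\infty$. But there are two genuine gaps in the middle of your argument, and they concern exactly the content that makes the proposition nontrivial.

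First, the invertibility argument fails. You propose to rule out a degenerate $t_\alpha$ by observing that $\ker(t_\alpha|_{M_0})$ and $\operatorname{coker}(t_\alpha)$ would violate the sub/quotient constraints. But $\ker(t_\alpha|_{M_0})$ is not a $\Pi$-submodule of $M(T)$ and $\operatorname{coker}(t_\alpha)$ is not a $\Pi$-quotient, so properties (vi) and (vii) simply don't apply to them. Concretely, a module of dimension $\delta$ with $t_\beta\neq 0$ and $t_\alpha=t_{\alpha^*}=t_{\beta^*}=0$ has no submodule $S$ with $\langle\dim S,\alpha_0\rangle>0$ and no quotient $N$ with $\langle\dim N,\alpha_0\rangle<0$, yet $t_\alpha=0$. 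Invertibility of $t_\alpha$ is a \emph{generic} property on $Z$, not a formal consequence of the HN constraints, and establishing it needs a separate density argument (e.g.\ that the locus of $T\in Z$ for which $M(T)\in\Pi(n\delta)^\times$ is nonempty and open in $Z$).

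Second, the appeal to ``structure theory on the $t_\alpha$-invertible locus'' is false as stated. It is not true that every indecomposable nilpotent $\Pi$-module of dimension $m\delta$ with $t_\alpha$ invertible has $t_{\beta^*}t_\alpha$ a single Jordan block, hence lies in some $I^\ell(m)$. For a counterexample, take $V_0=V_1=\mathbb{C}^m$ ($m\ge 2$), $t_\alpha=\mathrm{Id}$, $t_\beta$ a single nilpotent Jordan block, and $t_{\alpha^*}=t_{\beta^*}=0$. This is a nilpotent $\Pi$-module (the relations hold trivially, and any path of length $\ge 2$ passes through the zero maps), it is indecomposable, and it lies in $\Pi(m\delta)^\times$; but $t_{\beta^*}t_\alpha=0$, so its nilpotency order is $1$, not $m$, and it lies in no $I^\ell(k)$. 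So the Krull--Schmidt decomposition of $M(T)$ need not have all its summands in the $I^\ell$-family. The proposition only asserts this generically on $Z$, and one must actually show that the bad components of the semistable locus (those not of $I^\ell$-type) are avoided by the generic $T\in Z$. Your stratification step does not address this; it only gives a generic isomorphism type, without ensuring that type is a direct sum of $I^\ell(m)$'s. This is precisely the content that makes \cite[Proposition~7.11]{BKT} a theorem rather than formal bookkeeping, and it is missing from the proposal.
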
 

\begin{Proposition}{\label{prop:quiverimagaxiom}}

Every element $T \in I^\ell(n)$ has socle $S_1$ and cosocle $S_0$. Furthermore $\ker ( T/S_1 \rightarrow S_0)$ is isomorphic to a point in $I^r(n-1)$.

\end{Proposition}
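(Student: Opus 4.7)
The plan is to reduce all the $\Pi$-module data on $T$ to two commuting endomorphisms of $V_0$, and then proceed by explicit computation. Set $N := t_{\beta^*} t_\alpha$, which by hypothesis is a single Jordan block of size $n$ on the $n$-dimensional space $V_0$, and set $M := t_\alpha^{-1} t_\beta$. The preprojective relation $t_{\alpha^*}t_\alpha + t_{\beta^*}t_\beta = 0$ gives $t_{\alpha^*} t_\alpha = -NM$, and hence $t_{\alpha^*} = -NM\, t_\alpha^{-1}$; combining this with $t_\alpha t_{\alpha^*} + t_\beta t_{\beta^*} = 0$ yields $MN = NM$. Since the centralizer of a single size-$n$ Jordan block is $\mathbb{C}[N]$, we have $M = p(N)$ for some polynomial $p$.

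With this setup, the socle is computed as follows. At vertex $0$, invertibility of $t_\alpha$ forces the socle component to be zero. At vertex $1$, writing $w = t_\alpha v$, one computes $t_{\beta^*}w = Nv$ and $t_{\alpha^*}w = -MNv$; since $MN$ factors through $N$, we have $\ker t_{\beta^*}|_{V_1} \subseteq \ker t_{\alpha^*}|_{V_1}$, so the socle is $t_\alpha(\ker N)$, which is one-dimensional and isomorphic to $S_1$. Dually, the cosocle at vertex $1$ vanishes because $t_\alpha$ is surjective, and at vertex $0$ one has $\text{im}(t_{\alpha^*}) + \text{im}(t_{\beta^*}) = \text{im}(N)$ (using $\text{im}(NM) \subseteq \text{im}(N)$), so the cosocle is $V_0/\text{im}(N) \cong S_0$.

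It remains to identify $T' := \ker(T/S_1 \to S_0)$ as a point in $I^r(n-1)$. Explicitly, $T'$ has vertex-$0$ space $V_0' := \text{im}(N) \subset V_0$ and vertex-$1$ space $V_1' := V_1/t_\alpha(\ker N)$, both of dimension $n-1$, with operators induced from those of $T$. The defining conditions of $I^r(n-1)$ (the right-hand analogue of $I^\ell(n-1)$) involve the invertibility of an appropriate arrow and a single-Jordan-block condition on an associated endomorphism of size $n-1$. The key observation is that the induced map $\overline{t_{\beta^*}}: V_1' \to V_0'$ is an isomorphism: its kernel in $V_1$ is exactly $t_\alpha(\ker N) = S_1$, its image is $\text{im}(N) = V_0'$, and both spaces have dimension $n-1$. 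The associated nilpotent endomorphism then becomes, under this isomorphism, the restriction of $N$ to its $N$-invariant subspace $\text{im}(N)$, which is a single Jordan block of size $n-1$ by an elementary linear-algebra computation for a nilpotent Jordan block restricted to its image.

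The principal obstacle is matching the explicit computation to the precise definition of $I^r(n-1)$ from \cite{BKT}, and then ensuring that the remaining induced operators on $T'$ are compatible with the $\Pi$-module relations (this is automatic, since $T'$ is a subquotient of $T$). Once the explicit description of $T'$ is obtained and the isomorphism $\overline{t_{\beta^*}}$ is exhibited, the rest is routine.
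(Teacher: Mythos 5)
Your proposal is correct and follows essentially the same route as the paper's proof: identify the socle as $t_\alpha(\ker(t_{\beta^*}t_\alpha))$ and the cosocle as $\operatorname{coker}(t_{\beta^*}t_\alpha)$, then check that on the subquotient the induced $t_{\beta^*}$ is invertible and the associated endomorphism is nilpotent of order $n-1$. The only difference is that you spell out, via the commuting operators $N=t_{\beta^*}t_\alpha$ and $M=t_\alpha^{-1}t_\beta$, the computations the paper dismisses as ``a short calculation,'' which is a harmless (and welcome) elaboration.
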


\begin{proof}

Let $W = t_\alpha( \ker(t_{\beta^*} t_\alpha))$. Since $t_{\beta^*} t_\alpha$ is nilpotent of order $n$, $W$ is the socle of $T$, and it is  isomorphic to $S_1$. Let $U = \text{coker} (t_{\beta^*} t_\alpha)$. Since $t_{\beta^*} t_\alpha$ has order $n$ and $t_\alpha$ is invertible, $U$ is the cosocle of $T$, and it is isomorphic to $S_0$. Moreover, we can easily check that the sub quotient $\ker(T/W \rightarrow U)$ has dimension vector $(n-1)\delta$, the induced operator $t_{\beta^*}$ is invertible, and $t_\alpha  t_{\beta^*}$ is  nilpotent of order $n-1$. So, $\ker(T/W \rightarrow U)$ lies in $I^r(n-1)$.
\end{proof}

\begin{Proposition} {\label{prop:quiverreflectionformula}}
The following hold:
\begin{enumerate}
\item $\Sigma_0^*R^\ell(\alpha_1 + k \delta) = R^r (\alpha_0 + (k+1) \delta)$ 
\item $\Sigma_0^*R^\ell(\alpha_0 + k \delta) = R^r (\alpha_1 + (k-1) \delta)$ for $k>0$.
\item If $T \in I^\ell(n)$,  then $\Sigma_0^* T \in I^r (n)$ for $n>0$.

\item $\Sigma_1^*R^r(\alpha_0 + k \delta) = R^\ell(\alpha_1 +(k+1) \delta)$
\item $\Sigma_1^*R^r(\alpha_1 + k \delta) = R^\ell(\alpha_0 +(k-1) \delta)$ for $k>0$.
\item If $T \in I^r(n)$, then $\Sigma_1^* T \in I^\ell(n)$ for $n>0$.

\item $\Sigma_0 R^r(\alpha_1 + k \delta) = R^\ell (\alpha_0 + (k+1) \delta)$ 
\item $\Sigma_0 R^r(\alpha_0 + k \delta) = R^\ell (\alpha_1 + (k-1) \delta)$ for $k>0$.
\item If $T \in I^r(n)$,  then $\Sigma_0 T \in I^\ell (n)$ for $n>0$.

\item $\Sigma_1 R^\ell(\alpha_0 + k \delta) = R^r(\alpha_1 +(k+1) \delta)$
\item $\Sigma_1 R^\ell(\alpha_1 + k \delta) = R^r(\alpha_0 +(k-1) \delta)$ for $k>0$.
\item If $T \in I^\ell(n)$, then $\Sigma_1T \in I^r(n)$ for $n>0$.

\end{enumerate}
\end{Proposition}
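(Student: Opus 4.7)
The strategy is to exploit the symmetries of the category of $\Pi$-modules to cut the twelve statements down to a few representative cases, and then verify those by a combination of direct unraveling of definitions and by applying the general ``$\Sigma_i^{*}\Sigma_i\simeq\Id$ on modules without $S_i$-summands'' principle for preprojective reflection functors. First, recall the involutions: duality $(-)^{*}$ swaps $\Sigma_i\leftrightarrow\Sigma_i^{*}$, swaps $R^\ell(\alpha_i+k\delta)\leftrightarrow R^r(\alpha_i+k\delta)$, and is an anti-involution, so it reverses the order of composition; the diagram automorphism $(-)^\tau$ swaps $0\leftrightarrow1$, hence swaps $\Sigma_0\leftrightarrow\Sigma_1$, $\Sigma_0^{*}\leftrightarrow\Sigma_1^{*}$, and swaps $R^\ell(\alpha_1+k\delta)\leftrightarrow R^\ell(\alpha_0+k\delta)$, $R^r(\alpha_1+k\delta)\leftrightarrow R^r(\alpha_0+k\delta)$. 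Under these two symmetries, (1)--(12) split into three essentially equivalent orbits corresponding to the three ``types'' of identities, so it suffices to prove (1), (2), and (3).

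For (1): by Definition~\ref{def:real-mods}, $R^\ell(\alpha_1+k\delta)=\Sigma_1^{*}\Sigma_2^{*}\cdots\Sigma_k^{*}S_{k+1}$, so
$\Sigma_0^{*}R^\ell(\alpha_1+k\delta)=\Sigma_0^{*}\Sigma_1^{*}\cdots\Sigma_k^{*}S_{k+1},$
which, again by Definition~\ref{def:real-mods} applied with $k+1$ in place of $k$, is precisely $R^r(\alpha_0+(k+1)\delta)$. Items (4), (7), and (10) follow in exactly the same way (or by applying $*$ and $\tau$).

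For (2): apply $\Sigma_0^{*}$ to both sides of (7) with $k$ shifted to $k-1$, i.e.\ to $\Sigma_0R^r(\alpha_1+(k-1)\delta)=R^\ell(\alpha_0+k\delta)$. This gives $R^r(\alpha_1+(k-1)\delta)\cong\Sigma_0^{*}\Sigma_0R^r(\alpha_1+(k-1)\delta)=\Sigma_0^{*}R^\ell(\alpha_0+k\delta)$, provided the reflection functor identity $\Sigma_0^{*}\Sigma_0\simeq\Id$ is valid on the module $R^r(\alpha_1+(k-1)\delta)$. The latter is the content of the preprojective reflection functor theory of Buan--Iyama--Reiten--Smith \cite{BIRS09}: $\Sigma_i^{*}\Sigma_i\simeq\Id$ on any finite-dimensional $\Pi$-module without an $S_i$-summand (and dually for $\Sigma_i\Sigma_i^{*}$). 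From the explicit structure of $R^r(\alpha_1+(k-1)\delta)$ (Figure~\ref{Fig:reps} and its dual), this module is indecomposable and distinct from $S_0$, so the hypothesis holds. Items (5), (8), and (11) follow by the same argument after applying $*$ and $\tau$.

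The main technical point is (3), and the analogous (6), (9), (12). For (3), fix $T\in I^\ell(n)$ with $n>0$. By Proposition~\ref{prop:quiverimagaxiom}, $T$ has socle $S_1$, cosocle $S_0$, and $M:=\ker(T/\soc T\to\cosoc T)$ lies in $I^r(n-1)$. The plan is to compute $\Sigma_0^{*}T=I_0\otimes_\Pi T$ directly from the quiver data: writing $T=(V_0,V_1,t_\alpha,t_\beta,t_{\alpha^*},t_{\beta^*})$ with $t_\alpha$ invertible and $t_{\beta^*}t_\alpha$ nilpotent of order exactly $n$, use the standard description of $\Sigma_0^{*}$ as replacing $V_0$ with the cokernel of a map encoding the mesh relations at vertex $0$, with the new maps out of and into the new $V_0$ given by the universal construction. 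A short calculation will show that the resulting representation has $t_{\beta^*}^{\mathrm{new}}$ invertible and $(t_\alpha t_{\beta^*})^{\mathrm{new}}$ nilpotent of order exactly $n$, i.e.\ it lies in $I^r(n)$. The remaining cases (6), (9), (12) then follow by transporting this calculation through $*$ and $\tau$. The main obstacle will be keeping track of signs and indices in the explicit description of the reflection functor on $T$; once that bookkeeping is done, the nilpotency index and invertibility statements are immediate from the corresponding properties of $T$.
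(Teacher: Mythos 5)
Your proposal takes essentially the same route as the paper: (1) by unwinding Definition~\ref{def:real-mods}, (2) by the cancellation $\Sigma_0^*\Sigma_0\simeq\Id$ on a module without $S_0$ (the paper cites \cite[Eq.~5.2]{BKT} rather than \cite{BIRS09}, but it is the same fact), (3) by an explicit coordinate computation of $\Sigma_0^*T$, and the remaining nine items by applying $\tau$ and then $*$. One slip to correct before fleshing this out: $\tau$ swaps $\ell\leftrightarrow r$ as well as $0\leftrightarrow1$, i.e.\ $R^\ell(\alpha_0+k\delta)^\tau=R^r(\alpha_1+k\delta)$ and $R^\ell(\alpha_1+k\delta)^\tau=R^r(\alpha_0+k\delta)$ (see the display after Definition~\ref{def:real-mods}), not $R^\ell(\alpha_1+k\delta)\leftrightarrow R^\ell(\alpha_0+k\delta)$ as you wrote; with your stated action, applying $\tau$ to (1) does not produce one of the twelve listed identities, so the reduction to (1)--(3) would not actually close up. The final conclusion of your reduction is nevertheless correct once the $\tau$-action is fixed, and the rest of the argument — including leaving the coordinate check for (3) as a ``short calculation'' — mirrors the paper.
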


\begin{proof}
Statement (i) follows immediately from the definition. Since $R^r(\alpha_1 +k\delta)$ has no $0$-cosocle, $\Sigma_0^* \Sigma_0 R^r(\alpha_1 +k\delta) = R^r(\alpha_1 +k\delta)$ (see \cite[Equation 5.2]{BKT}), which implies (ii).

For (iii), let  $T \in I^\ell(n)$. A short calculation shows that $\tilde T = \Sigma_0^* T = (\tilde t_\alpha,\tilde t_\beta,\tilde t_{\alpha^*},\tilde t_{\beta^*})$  has the property that $\tilde t_{\beta^*}$ is invertible and $\tilde t_\alpha \tilde t_{\beta^*}$ is nilpotent of order $n$, so $\tilde T \in  I^r(n).$

Statements (iv) - (vi) follow from the first three by applying $\tau$. The remaining six statements follow from the first six by applying $*$.
\end{proof}

\begin{Lemma}{\label{lem:saitofiltration}}
The following hold: 

\begin{enumerate}

\item If $T^{\ell,0} = T^{\ell,1}$, then  $(\Sigma_1 T)^{r,k} = \Sigma_1T^{\ell,k+1}$ and $(\Sigma_1T^{\ell,k+1})/ (\Sigma_1 T^{\ell,k+2}) = \Sigma_1 (T^{\ell,k+1}/T^{\ell,k+2})$.

\item If $T^{r,0} = T^{r,1}$, then  $(\Sigma_0 T)^{\ell,k} = \Sigma_0T^{r,k+1}$ and  $(\Sigma_0T^{r,k+1})/ (\Sigma_0 T^{r,k+2}) = \Sigma_0 (T^{r,k+1}/T^{r,k+2})$.
\item If $T^\ell_0 = T^\ell_1$, then $(\Sigma^*_0 T)^r_{k} = \Sigma^*_0 T^\ell_{k+1}$ and $(\Sigma^*_0T^\ell_{k+1})/ (\Sigma^*_0 T^\ell_{k+2}) = \Sigma^*_0 (T^\ell_{k+1}/T^\ell_{k+2})$.

\item If $T^r_0 = T^r_1$, then $(\Sigma^*_1 T)^\ell_{k} = \Sigma^*_1T^r_{k+1}$ and $(\Sigma^*_1 T^r_{k+1})/ (\Sigma^*_1 T^r_{k+2}) = \Sigma^*_1 (T^r_{k+1}/T^r_{k+2})$.
\end{enumerate}
\end{Lemma}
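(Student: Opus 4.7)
My plan is to reduce the four parts to a single case via the $*$ and $\tau$ symmetries, then establish the remaining case by exploiting that (modulo $2$) the alternating sequences of reflection functors in the $\ell$- and $r$-filtration formulas differ by one $\Sigma_1$.

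First I would observe that the four statements are related by symmetries of the setup. The involution $*$ on $\Pi$-modules dualizes the reflection functors (swapping $\Sigma_i \leftrightarrow \Sigma^*_i$), and applied to the explicit formulas it exchanges the upper-index filtration $T^{\ell,k}$ with the lower-index filtration $T^\ell_k$. The $\asl_2$ diagram automorphism $\tau$ interchanges $0 \leftrightarrow 1$ and hence $\ell \leftrightarrow r$. Together these reduce the lemma to a single case, which I take to be (i).

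For (i), the key combinatorial observation is that, reading reflection subscripts modulo $2$, both
\begin{equation*}
\Sigma_{k+1}\Sigma_k \cdots \Sigma_2\Sigma_1 \quad \text{and} \quad \Sigma_{k-1}\Sigma_{k-2}\cdots \Sigma_0 \cdot \Sigma_1
\end{equation*}
apply innermost-to-outermost the same alternating string of length $k+1$, and similarly the outer dual strings $\Sigma^*_0 \Sigma^*_1 \cdots \Sigma^*_{k-1}$ and $\Sigma^*_2 \Sigma^*_3 \cdots \Sigma^*_{k+1}$ agree as compositions. Substituting these identifications into the explicit formulas for $(\Sigma_1 T)^{r,k}$ and $\Sigma_1 T^{\ell,k+1}$ shows that the two expressions agree up to a single insertion of $\Sigma_1 \Sigma^*_1$ between the outer dual string and the innermost $\Sigma_1$.

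The hypothesis $T^{\ell,0} = T^{\ell,1}$ is equivalent to $T = \Sigma^*_1 \Sigma_1 T$, which by the standard property of reflection functors cited in the proof of Proposition~\ref{prop:quiverreflectionformula} says exactly that $T$ has no $S_1$-cosocle. I would then show that this cosocle-vanishing property propagates through the inner reflection sequence to the module where the extra $\Sigma_1 \Sigma^*_1$ appears, so that $\Sigma_1 \Sigma^*_1$ acts as the identity there. This yields $(\Sigma_1 T)^{r,k} = \Sigma_1 T^{\ell,k+1}$. The corresponding quotient identity then follows because on modules with no $S_1$-cosocle, $\Sigma_1$ is exact on the short exact sequence $0 \to T^{\ell,k+2} \to T^{\ell,k+1} \to T^{\ell,k+1}/T^{\ell,k+2} \to 0$, so its image commutes with taking quotients. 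The main obstacle I anticipate is the bookkeeping: verifying carefully (most likely by induction on $k$) that the intermediate modules obtained by applying successive reflection functors indeed lack the requisite cosocles, using properties (iv)--(vii) of the filtrations together with Proposition~\ref{prop:quiverreflectionformula} to track the socle/cosocle structure through each step.
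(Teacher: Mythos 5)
Your overall architecture matches the paper's: reduce to part (i) using $*$ and $\tau$, compare the functor strings modulo $2$ to get $\Sigma_1 T^{\ell,k+1} = \Sigma_1 \Sigma_1^*\, (\Sigma_1 T)^{r,k}$, argue that $\Sigma_1 \Sigma_1^*$ is the identity there, and get the quotient statement from left-exactness of $\Sigma_1$ together with $T^{\ell,k+2}$ lying in the essential image of $\Sigma_1^*$.

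However, there are two concrete problems with the middle step. First, you have the socle/cosocle direction reversed: for $\Sigma_1 \Sigma_1^* M \cong M$ the relevant property is that $M$ has \emph{no $S_1$-socle} (the canonical surjection $M \to \Sigma_1 \Sigma_1^* M$ has kernel exactly the $1$-socle). The hypothesis $T^{\ell,0}=T^{\ell,1}$ does say $T$ has no $S_1$-cosocle, but this is converted \emph{once}, via applying $\Sigma_1$, into the statement that $\Sigma_1 T$ has vanishing $1$-socle; thereafter it is socle-vanishing that must be transferred. As written, your plan of ``propagating cosocle-vanishing to the module where $\Sigma_1\Sigma_1^*$ appears'' tracks the wrong property and would not establish what is needed. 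Second, the propagation-through-the-reflection-sequence plan is also unnecessary and would get bogged down in precisely the bookkeeping you anticipate: the paper instead just observes that $(\Sigma_1 T)^{r,k}$ is, by construction of the filtration \eqref{T':filt}, a \emph{submodule} of $\Sigma_1 T$, so its $1$-socle sits inside the $1$-socle of $\Sigma_1 T$, which vanishes; no induction through the intermediate reflections is needed. (There is also a small misstatement about the location of the extra $\Sigma_1\Sigma_1^*$: it is inserted at the outermost position, $\Sigma_1 T^{\ell,k+1} = \Sigma_1\Sigma_1^*\,(\Sigma_1 T)^{r,k}$, not between the dual string and the innermost $\Sigma_1$; this does not change your plan materially but is worth fixing.)
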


\begin{proof}

Using the explicit formulas for the filtrations, we have $\Sigma_1T^{\ell,k+1} = \Sigma_1 \Sigma^*_1 (\Sigma_1 T)^{r,k}$. We always have a surjective map $  (\Sigma_1 T)^{r,k} \rightarrow \Sigma_1 \Sigma^*_1 (\Sigma_1 T)^{r,k}$, whose kernel is precisely the $1$-socle of  $(\Sigma_1 T)^{r,k}$. But $(\Sigma_1 T)^{r,k} \subset \Sigma_1 T$, and $\Sigma_1 T$ has vanishing $1$-socle because $T^{\ell,0} = T^{\ell,1}$. Thus the above map is an isomorphism, giving $(\Sigma_1 T)^{r,k} = \Sigma_1T^{\ell,k+1}$.

Because $\Sigma_1$ is left-exact, we have a injection $(\Sigma_1T^{\ell,k+1})/ (\Sigma_1 T^{\ell,k+2})  \rightarrow \Sigma_1 (T^{\ell,k+1}/T^{\ell,k+2})$, and the obstruction to this map being an isomorphism is an element of $\text{Ext}^1(I_1, T^{\ell,k+1})$.  By \cite[Remark 5.5 (ii)]{BKT}, the essential image of the functor $\Sigma^*_1$ is precisely those modules $M$ with  $\text{Ext}^1(I_1, M)=0 $. By construction, $T^{\ell,k+1}$ is in the image of $\Sigma^*_1$, so we have $\text{Ext}^1(I_1, T^{\ell,k+1})=0 $. This completes the proof of (i).

The second statement follows from the first by applying $\tau$, and the remaining two statements follow by duality using the canonical isomorphisms
\begin{equation}
T^\ell_k = (T^*/(T^*)^{r,k})^* \text{ and } T^r_k = (T^*/(T^*)^{l,k})^*. \qedhere
\end{equation}

\end{proof}

\subsection{MV polytopes from quiver varieties} \label{ss:asl2-proof}
Associate a Lusztig data $\cc^\ell$ to each $\Pi$-module $T$ by
\begin{itemize}

\item  $\cc^\ell_{\beta_k}$ is defined by $T^\ell_{k}/T^\ell_{k-1}\simeq R^\ell(\alpha_0+(k-1) \delta)^{\oplus \cc^\ell_{\beta_k}}$. 

\item $\cc^\ell_{\beta^k}$ is defined by $T^{\ell, k-1}/T^{\ell,k} \simeq R^\ell(\alpha_1+(k-1) \delta)^{\oplus \cc^\ell_{\beta^k}}$. 

\item Because the filtration in \eqref{T:filt} commutes with direct sums, every indecomposable summand of $T^{\ell, \infty}/T^\ell_\infty$ must have dimension $k \delta$ for some $k$. Then $\cc^\ell_{\delta}$ is defined to be the partition whose parts are these $k$. Notice that, if $T^{\ell, \infty}/T^\ell_\infty \in \Pi(n\delta)^\times$, then this agrees with $\lambda^\ell$ from Proposition \ref{prop:ima-part}.
\end{itemize}
Similarly define a Lusztig data $\cc^r$ via the same definition twisted by $\tau$.
By Definition \ref{DecoratedPseudoWeylPolytope} these are the left and right sides of a decorated pseudo-Weyl polytope $P_T$. 

The function which associates to every $T \in \coprod_v \Pi(v)$ the polytope $P_T$ is constructible. Thus on each irreducible component  of $\coprod_v \Pi(v)$, $P_T$ takes on a unique generic value, i.e. the constant value it takes on some open dense subset. For $b \in B(-\infty)$, we write $\HN_b$ for the generic value of $P_T$ on $Z_b$, the irreducible component corresponding to $b$. This pseudo-Weyl polytope is called the MV polytope for $b$ in \cite{BKT}. The notation $\HN$ stands for ``Harder-Narasimhan," since this polytope is constructed using Harder-Narasimhan filtrations.

\begin{Theorem} \label{th:sl2=sl2}
For any $b \in B(-\infty)$, $\HN_b$ and the MV polytope $MV_b$ defined \cite{BDKT:??} agree except the imaginary parts are transposed partitions.
\end{Theorem}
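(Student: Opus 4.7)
The plan is to apply the uniqueness theorem (Theorem \ref{th:unique-aff}) to a suitably modified construction. Define $\widetilde{\HN}_b$ to be the decorated pseudo-Weyl polytope obtained from $\HN_b$ by transposing both imaginary partitions. If $b \mapsto \widetilde{\HN}_b$ satisfies axioms (W), (C1)--(C4), (S1)--(S4), (I) of Theorem \ref{th:unique-aff}, then by the uniqueness part of that theorem $\widetilde{\HN}_b = MV_b$, which is exactly the statement.

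Axiom (W) is immediate since transposing preserves partition size. Axioms (C1)--(C4) involve only real-root components of Lusztig data, and axioms (S1)--(S4) use reflections $\cc \circ s_i$ that fix imaginary parts, so each holds for $\widetilde{\HN}_b$ if and only if it holds for $\HN_b$. For (C1) and (C2) the key observation is that, by construction of the HN filtrations, $\rbc_{\alpha_0}(\HN_b)$ (resp.\ $\lbc_{\alpha_1}(\HN_b)$) equals the dimension of the $0$-cosocle (resp.\ $1$-cosocle) of a generic $T \in Z_b$; combined with the Kashiwara--Saito description of the crystal operator $\e_i$ as enlarging the $i$-cosocle by one, this yields (C1) and (C2). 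Axioms (C3) and (C4) follow by dualizing through the Kashiwara involution $T \mapsto T^*$. Axioms (S1)--(S4) follow by combining Proposition \ref{prop:functorial-Saito}, which says the reflection functors $\Sigma_i$ and $\Sigma_i^*$ geometrically realize Saito's reflections, with Lemma \ref{lem:saitofiltration}, which says the HN filtrations transform correctly under these functors.

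The substantive step is verifying axiom (I). Suppose $\lbc(\widetilde{\HN}_b)$ is purely imaginary and equal to $\lambda$. By definition of $\widetilde{\HN}_b$, this means $\lbc(\HN_b)_\delta = \lambda^t$ and all real components of $\lbc(\HN_b)$ vanish, so a generic $T \in Z_b$ satisfies $T^\ell_\infty = 0$ and $T^{\ell,\infty} = T$, giving $T = \bigoplus_{j=1}^{k} T_{n_j}$ with $T_{n_j} \in I^\ell(n_j)$ and $(n_1, \dots, n_k) = \lambda^t$; in particular $k$ equals the number of parts of $\lambda^t$, which is $\lambda_1$. Proposition \ref{prop:quiverimagaxiom} says each summand $T_{n_j}$ has socle $S_1$, cosocle $S_0$, and that removing both produces a module in $I^r(n_j - 1)$. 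Using the functoriality of the HN filtration under direct sums together with Proposition \ref{prop:quiverreflectionformula}, one identifies $T^r_\infty$ with the $1$-socle $S_1^{\oplus k}$, $T^{r,\infty}$ with the kernel of the surjection onto the $0$-cosocle $S_0^{\oplus k}$, and $T^{r,\infty}/T^r_\infty$ with $\bigoplus_j I^r(n_j - 1)$. This yields $\rbc_{\alpha_0}(\HN_b) = \rbc_{\alpha_1}(\HN_b) = k = \lambda_1$ and $\rbc_\delta(\HN_b) = (n_1 - 1, \dots, n_k - 1)$. A direct Young-diagram computation confirms $(n_1 - 1, \dots, n_k - 1) = (\lambda \setminus \lambda_1)^t$, so transposing back gives $\rbc_\delta(\widetilde{\HN}_b) = \lambda \setminus \lambda_1$, verifying (I).

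The main obstacle is the precise identification of the right HN filtration for a purely imaginary module. By functoriality this reduces to the single indecomposable case $T = T_n \in I^\ell(n)$, and one must establish that $T^r_k = S_1$ and $T^{r,k} = \ker(T_n \twoheadrightarrow S_0)$ for every $k \geq 1$, together with the indecomposability of $T^{r,\infty}/T^r_\infty$ in $I^r(n-1)$. This should follow from iterated application of Proposition \ref{prop:quiverreflectionformula}, which controls the interchange of $I^\ell$ and $I^r$ under $\Sigma_0$, $\Sigma_1$, $\Sigma^*_0$, $\Sigma^*_1$, combined with the socle/cosocle data of Proposition \ref{prop:quiverimagaxiom}.
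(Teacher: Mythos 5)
Your proposal is correct and follows essentially the same route as the paper: defining the transposed polytope $\widetilde{\HN}_b$ (the paper calls it $\overline{\HN}_b$) and verifying the axioms of Theorem~\ref{th:unique-aff}, with (W) and (C1)--(C4) from the crystal-operator and duality constructions, (S1)--(S4) from Propositions~\ref{prop:functorial-Saito}, \ref{prop:quiverreflectionformula} and Lemma~\ref{lem:saitofiltration}, and (I) from Propositions~\ref{prop:ima-part} and~\ref{prop:quiverimagaxiom}. Your treatment of (I) is a little more explicit than the paper's (you spell out that $T^r_\infty$ is the socle, $T^{r,\infty}$ is the kernel of the cosocle projection, and you do the Young-diagram bookkeeping that removing one box from each column of $\lambda^t$ gives $(\lambda\setminus\lambda_1)^t$), but this is just filling in the same argument in more detail rather than a different approach.
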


\begin{proof}
For each $b \in B(-\infty)$, let $\overline \HN_b$ be the pseudo-Weyl polytope obtained from $\HN_b$ be taking the transpose of the partitions decorating each vertical edge. It suffices to show that $b \rightarrow \overline \HN_b$ satisfies the conditions of Theorem \ref{th:unique-aff}.

Axiom (W) is clear. Axioms (C1) and (C2) follow from the definition of the crystal operators, and (C3) and (C4) follow by using the star operators. 

Conditions (S1), (S2), (S3), and (S4) follow immediately from Proposition \ref{prop:quiverreflectionformula} and Lemma \ref{lem:saitofiltration}.  

All that remains is to prove (I).  So, fix a component $Z$ such that, for generic $T \in Z$,  $T = T^{\ell,\infty}/T^\ell_\infty$. By Proposition \ref{prop:ima-part}, for generic $T \in Z$, there is a partition $\lambda^\ell$ such that $T = \bigoplus_i  T_{\lambda^\ell_i}$, where $ T_{\lambda^\ell_i} \in I^\ell(\lambda^\ell_i)$. Letting $\overline \lambda^\ell$ denote the transpose of $\lambda^\ell$, Proposition \ref{prop:quiverimagaxiom} implies $T$ has socle isomorphic to $S_1^{\oplus \overline \lambda^{\ell}_1}$, cosocle isomorphic to $S_0^{\oplus \overline \lambda^{\ell}_1}$, that $\ker \left(T / S_1^{\oplus \overline \lambda^{\ell}_1} \rightarrow S_0^{\oplus \overline \lambda^{\ell}_1}\right) \in T^{r,\infty}/T^r_\infty$, and that $\ker \left(T / S_1^{\oplus \overline \lambda^{\ell}_1}\rightarrow S_0^{\oplus \overline \lambda^{\ell}_1}\right) = \bigoplus_i  T_{\lambda^\ell_i-1}$, where $ T_{\lambda^\ell_i-1} \in I^r(\lambda^\ell_i-1)$. This is precisely the content of (I).
\end{proof}

\subsection{Characterization of symmetric affine MV polytopes}
Along with \cite[\S1.6 and \S7.6]{BKT}, Theorem \ref{th:sl2=sl2} allows one to characterization of MV polytopes in all symmetric affine types. In this section we state this precisely. This section is essentially a rewording of results in \cite{BKT}.

\begin{Definition}
Let $\g$ be a symmetric affine Kac-Moody algebra of rank $r+1$. A decorated pseudo-Weyl polytope for $\g$ is a convex polytope whose edges are all translates of integer multiples of roots, along with a partition $\lambda_F$ associated to each (possibly degenerate) $r$-face $F$ parallel to $\delta$, such that, for each edge $e$ which is a translate of $k \delta$, the sum of $|\lambda_F|$ over all faces $F$ incident to $e$ is $k$.
\end{Definition}

\begin{Remark} In \cite{BKT} the decorating partitions are indexed by chamber weights of an underlying finite-type root system; it is straightforward to see that these in turn index the possible $r$-faces parallel to $\delta$, so the above wording is equivalent. 
\end{Remark}

\begin{Theorem} \label{th:comb-poly}
Let $\g$ be a symmetric affine Kac-Moody algebra of rank $r+1$. The type $\g$ MV polytopes are exactly those decorated pseudo-Weyl polytopes $P$ such that each 2-face $S$ satisfies either:
\begin{enumerate}
\item The roots parallel to $S$ form a rank-2 root system (of type $\mathfrak{sl}_2 \times \mathfrak{sl}_2$ or $\mathfrak{sl}_3$), and $S$ is an MV polytope of that type.

\item The roots parallel to $S$ form a root system of type $\asl_2$. $S$ is a Minkowski sum of a smaller polytope $S'$ with the line segment $\sum_{S \subset F} \lambda_F.$ For each edge $e$ of $S'$ parallel to $\delta$, let $\lambda_e = \lambda_F$ for the unique $r$-face $F$ of $P$ which contains $e$ but does not contain $S$. Then $S'$ along with this decoration is a type $\asl_2$ MV polytope.  \qed
\end{enumerate} 
\end{Theorem}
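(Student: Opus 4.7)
The plan is to bootstrap from the analogous 2-face characterization already established in \cite{BKT}, which has the same shape as Theorem \ref{th:comb-poly} but uses the quiver-variety definition of $\asl_2$ MV polytopes instead of the combinatorial definition of \cite{BDKT:??}. The bridge between the two is supplied by Theorem \ref{th:sl2=sl2}: the two notions of $\asl_2$ MV polytope differ only by transposing each partition decorating an imaginary-root edge. So the task reduces to transporting the 2-face condition across this global transpose.

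First I would recall the result of \cite[\S1.6 and \S7.6]{BKT}: the preprojective-algebra construction produces, for each $b\in B(-\infty)$, a decorated pseudo-Weyl polytope $\HN_b$, and the set of all such $\HN_b$ is characterized by exactly the conditions in Theorem \ref{th:comb-poly} except that in clause (ii) the distinguished 2-face $S'$ is required to be an $\asl_2$ MV polytope \emph{in the quiver sense}. Clause (i) involves no imaginary decorations, so it carries over verbatim and needs no further discussion. It therefore suffices to prove that the conditions in clause (ii) as stated match the \cite{BKT} conditions after applying the transpose of Theorem \ref{th:sl2=sl2}.

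Next I would argue the following. Given a decorated pseudo-Weyl polytope $P$, let $\overline{P}$ denote the polytope obtained by transposing every partition $\lambda_F$ attached to an $r$-face $F$ parallel to $\delta$. This is a well-defined involution on decorated pseudo-Weyl polytopes, and by Theorem \ref{th:sl2=sl2} it interchanges type $\asl_2$ MV polytopes in the sense of \cite{BDKT:??} with those of \cite{BKT}. The key compatibility to check is that transposing the decorations globally on $P$ restricts, on each 2-face $S$ of type $\asl_2$, to transposing the decorations on the subpolytope $S'$ appearing in the Minkowski decomposition $S = S' + \sum_{S\subset F}\lambda_F$. Once this is verified, the equivalence of the two 2-face characterizations is immediate: $P$ satisfies the conditions of Theorem \ref{th:comb-poly} if and only if $\overline P$ satisfies the conditions from \cite{BKT}, if and only if $\overline P = \HN_b$ for some $b$, if and only if (by applying Theorem \ref{th:sl2=sl2} globally, together with the already-established match for finite-type 2-faces) $P$ is a decorated polytope of MV type.

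The main obstacle I anticipate is precisely this bookkeeping about where the decorating partitions live. In clause (ii) the partition on a $\delta$-edge $e$ of $S'$ is prescribed to be $\lambda_F$ for the unique $r$-face $F\supset e$ that does not contain $S$; one must check that this assignment is consistent as $S$ ranges over all $\asl_2$ 2-faces of $P$ (so that no $r$-face $F$ is forced to carry two conflicting partitions) and that it interacts correctly with Minkowski summation by $\sum_{S\subset F}\lambda_F$. Modulo this verification, which is essentially combinatorial and parallels the analogous check carried out in \cite{BKT} in the quiver convention, the theorem follows by combining the \cite{BKT} characterization with Theorem \ref{th:sl2=sl2} face by face.
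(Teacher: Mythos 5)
Your proposal takes essentially the same route as the paper, which treats this theorem as a direct rewording of the 2-face characterization from \cite{BKT} (\S 1.6 and \S 7.6) translated via Theorem \ref{th:sl2=sl2}; the paper gives no separate proof beyond this observation. You actually spell out more of the transpose-compatibility bookkeeping (noting in particular that $|\lambda_F|=|\lambda_F^t|$ so the Minkowski summand is unaffected, while the $\delta$-edge decorations on $S'$ are the ones that must be matched up) than the paper does, so the proposal is consistent with and slightly more explicit than the paper's own treatment.
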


\begin{Remark}
An extension of Theorem \ref{th:comb-poly} to all (not necessarily symmetric) affine cases is given in \cite[Theorem B]{TW}. 
\end{Remark}

\renewcommand{\theenumi}{\roman{enumi}}
\renewcommand{\labelenumi}{(\theenumi)}


\begin{thebibliography}{[HKKPTVVZ]}


\bibitem[Aka]{Aka}
T. Akasaka
{\em An integral {PBW} basis of the quantum affine algebra of type {$A^{(2)}_2$}},
{\em Publ. Res. Inst. Math. Sci.},
{\bf 38},
{2002}.
\newblock \arxiv{math/0105170v1}

\bibitem[And]{And} 
J. Anderson,
{\em A polytope calculus for semisimple groups},
{\em Duke Math. J.},
{\bf 116},
(2003), 567--588.
\newblock \arxiv{math/0110225} 

\bibitem[BauGau]{baumanngaussent}
P. Baumann, S.~Gaussent, {\em  On {M}irkovi\'c-{V}ilonen cycles and crystal combinatorics.}, {\em Represent. Theory}, {\bf 12} (2008), 83--130.
\arxiv{math/0606711}

\bibitem[BDKT]{BDKT:??} 
Pierre Baumann, Thomas Dunlap, Joel Kamnitzer, and Peter Tingley.
  \emph{Rank 2 affine MV polytopes}.
\arxiv{1202.6416}.

\bibitem[BK]{BK??}
P.~Baumann and J.~Kamnitzer, \emph{Preprojective algebras and mv polytopes}, to
  appear in Represent.\ Theory.
  \arxiv{1009.2469}.

\bibitem[BKT]{BKT} 
Pierre Baumann, Joel Kamnitzer, and Peter Tingley, \emph{Affine
  {M}irkovi\'c-{V}ilonen polytopes}. \arxiv{1110.3661}.
  
  \bibitem[Beck]{Beck}
J. Beck, 
{\em Braid group action and quantum affine algebras},
{\em Comm. Math. Phys.},
{\bf 165}, {(1994)}.
\arxiv{hep-th/9404165}.
 

\bibitem[BCP]{BCP}
J. Beck, V. Chari, A. Pressley,
{\em An algebraic characterization of the affine canonical basis},
{\em Duke Math. J.},
{\bf 99},
(1999),
455--487.
\arxiv{math/9808060}

\bibitem[BN]{BN}
J. Beck, H. Nakajima,
{\em Crystal bases and two-sided cells of quantum affine algebras},
{\em Duke Math. J.},
{\bf 123},
(2004),
335--402.
\arxiv{math/0212253}

\bibitem[BZ]{BZ}
A. Berenstein and A. Zelevinsky, {\em Tensor product multiplicities, canonical bases, and totally positive varieties}, {\em Invent. Math.} {\bf 143} (2001),  77--128.
\arxiv{math/9912012}


\bibitem[BFG]{BFG}
A.~Braverman, M.~Finkelberg and D.~Gaitsgory,
{\em Uhlenbeck spaces via affine Lie algebras},
In  {\em The unity of mathematics
(volume dedicated to I.~M.~Gelfand's 90th birthday)},
 Progr. Math., {\bf 244}, Birkh{\"a}user Boston (2006), 17-135.
 \arxiv{math/0301176}

\bibitem[BIRS]{BIRS09}
A.~B. Buan, O.~Iyama, I.~Reiten, and J.~Scott, {\em Cluster structures for
  2-{C}alabi-{Y}au categories and unipotent groups}, Compos. Math. \textbf{145}
  (2009), no.~4, 1035--1079.
 \arxiv{math/0701557}


\bibitem[HongKang]{HongKang}
J. Hong, S-J. Kang.
{\em Introduction to quantum groups and crystal bases},
{Graduate Studies in Mathematics},
{\bf 42},
{American Mathematical Society},
{Providence, RI},
(2002).

\bibitem[Kac]{Kac90}
Victor~G. Kac.
\newblock {\em Infinite-dimensional {L}ie algebras, third edition}.
\newblock Cambridge University Press, Cambridge, 1990.

\bibitem[Kam1]{Kam}
J. Kamnitzer,
{\em Mirkovi\'{c}-Vilonen cycles and polytopes},
{\em Ann. of Math. (2)} {\bf 171} (2010), 731--777.
\arxiv{math/0501365}

\bibitem[Kam2]{KamCrystal}
 J. Kamnitzer
{\em The crystal structure on the set of {M}irkovi\'c-{V}ilonen polytopes},
{\em Adv. Math.} {\bf 215} (2007), 66--93
\arxiv{math/0505398}

\bibitem[Kas]{Kashiwara:1995}
M.~Kashiwara.
\newblock On crystal bases.
\newblock {\em CMS Conf. Proc.}, 16, 1995.

\bibitem[KS]{KS:1997}
Masaki Kashiwara and Yoshihisa Saito.
\newblock Geometric construction of crystal bases.
\newblock {\em Duke Math. J.}, 89(1):9--36, 1997.
\newblock \arxiv{q-alg/9606009}.

\bibitem[KR]{KlRa}
Alexander Kleshchev and Arun Ram, \emph{Representations of
  {K}hovanov-{L}auda-{R}ouquier algebras and combinatorics of {L}yndon words},
  \arxiv{0909.1984}.

\bibitem[Lus1]{Lus}
G. Lusztig, {\em Singularities, character formulas and a q-analog of weight multiplicities}, {\em Ast\'erisque} \textbf{101-102} (1983), 208-229.

\bibitem[Lus2]{LusCanonical}
G. Lusztig, {\em Canonical bases arising from quantized enveloping algebras}, {\em J. Amer. Math. Soc.} {\bf 3} (1990)  447--498. 

\bibitem[Lus91]{Lus91}
George Lusztig, \emph{Quivers, perverse sheaves, and quantized enveloping
  algebras}, J. Amer. Math. Soc. \textbf{4} (1991), no.~2, 365--421.

\bibitem[Lus3]{LusPiecewise}
G. Lusztig, {\em Introduction to quantized universal enveloping algebras}, \textit{Progr. in Math.} \textbf{105}, Birkh\"auser, 1992.

\bibitem[Lus4] {LusPiecewiseLinear}
G. Lusztig, {\em Piecewise linear parametrization of canonical bases}, \textit{Pure Appl. Math. Q.},  \textbf{7}, {2011.}
\arxiv{0807.2824}.
     
     
\bibitem[Muth]{Muth}
D. Muthiah, {\em Double MV Cycles and the Naito--Sagaki--Saito Crystal}. \arxiv{1108.5404}

\bibitem[NSS]{NSS}
S. Naito, D. Sagaki, and Y. Saito,
{\em Toward Berenstein-Zelevinsky data in affine type $A$, I:
Construction of affine analogs},
\arxiv{1009.4526}.

\bibitem[Sai]{Saito:1994}
Y. Saito. 
{\em PBW basis of quantized universal enveloping algebras. }
\textit{Publ. Res. Inst. Math. Sci.} {\bf 30} (1994), no. 2, 209--232. 

\bibitem[TW]{TW}
Peter Tingley and Ben Webster. {\em Mirkovic-Vilonen polytopes and Khovanov-Lauda-Rouquier algebras}. Preprint \arxiv{1210.6921v1}.
\end{thebibliography}
\end{document}